\theoremstyle{plain}
\newtheorem{thm}{Theorem}
\newtheorem{prop}{Proposition}[section]
\newtheorem{cor}{Corollary}
\newtheorem{lemma}[prop]{Lemma}
\newtheorem{question}{Question}
\newtheorem*{conju}{Conjecture}
\theoremstyle{definition}
\newtheorem{example}[prop]{Example}
\newtheorem{defn}[prop]{Definition}
\theoremstyle{remark}
\newtheorem{remark}[prop]{Remark}
\theoremstyle{plain}
\newtheorem{theorem}[prop]{Theorem}
\newtheorem{proposition}[prop]{Proposition}
\newtheorem{corollary}[prop]{Corollary}
\newtheorem{conjecture}[prop]{Conjecture}
\theoremstyle{definition}
\newtheorem{definition}[prop]{Definition}
\newcommand{\Crit}{\operatorname{Crit}}
\newcommand{\handle}{\mathcal{H}}
\newcommand{\ind}{\operatorname{ind}}
\newcommand{\interior}{\operatorname{int}}
\newcommand{\Lie}{\mathcal{L}}
\newcommand{\muCZ}{\mu_{\operatorname{CZ}}}
\newcommand{\PD}{\operatorname{PD}}
\newcommand{\PT}{\operatorname{PT}}
\newcommand{\SFT}{{\text{SFT}}}
\newcommand{\Order}{\mathcal{O}}
\newcommand{\DD}{{\mathbb D}}
\newcommand{\NN}{{\mathbb N}}
\newcommand{\QQ}{{\mathbb Q}}
\newcommand{\RR}{{\mathbb R}}
\newcommand{\ZZ}{{\mathbb Z}}
\newcommand{\aA}{{\mathcal A}}
\newcommand{\iI}{{\mathcal I}}
\newcommand{\mM}{{\mathcal M}}
\newcommand{\rR}{{\mathcal R}}
\newcommand{\tT}{{\mathcal T}}
\newcommand{\uU}{{\mathcal U}}
\newcommand{\p}{\partial}
\newcommand{\ff}{{\mathfrak f}}
\newcommand{\x}{\times}
\newcommand{\Dsft}{\mathbf{D}_\SFT}
\newcommand{\Surface}{\Sigma}
\newcommand{\RS}{S}
\numberwithin{equation}{section}
\definecolor{blue}{rgb}{0,0,1}
\definecolor{red}{rgb}{1,0,0}
\definecolor{green}{rgb}{0,.7,0}
\definecolor{Chris}{rgb}{0.5,0,1}
\definecolor{Janko}{rgb}{0.6,0.6,0}
\newcommand{\comment}[1]{}
\title[Algebraic Torsion in Contact Manifolds]{Algebraic Torsion in
Contact Manifolds}
\author{Janko Latschev}
\author[Chris Wendl]{Chris Wendl \\ \ \\ 
(with an appendix by Michael Hutchings)}
\address{Janko Latschev \\ 
Fachbereich Mathematik\\
Universit\"at Hamburg\\
Bundesstrasse 55\\
20146 Hamburg\\
Germany}
\email{janko.latschev@math.uni-hamburg.de}
\address{Chris Wendl \\ 
Department of Mathematics \\
University College London \\
Gower Street \\
London WC1E 6BT \\
United Kingdom}
\email{wendl@math.ucl.ac.uk}
\address{Michael Hutchings \\
Mathematics Department \\
970 Evans Hall \\
University of Califronia \\
Berkeley, CA 94720 \\
USA}
\email{hutching@math.berkeley.edu}
\subjclass[2010]{Primary 53D42; Secondary 57R17, 53D35, 32Q65}
\begin{document}

\begin{abstract}
We extract an invariant taking values in $\NN \cup \{\infty\}$,
  which we call the \emph{order of algebraic torsion}, from the Symplectic
  Field Theory of a closed contact manifold, and show that its
  finiteness gives obstructions to the existence of symplectic
  fillings and exact symplectic cobordisms.  A contact 
manifold has algebraic torsion of order~$0$ if and only if it
is algebraically overtwisted (i.e.~has trivial contact homology),
and any contact $3$-manifold with positive Giroux torsion has algebraic
  torsion of order~$1$ (though the converse is not true). 
  We also construct examples for each $k \in
  \NN$ of contact $3$-manifolds that have algebraic torsion of order
  $k$ but not~$k-1$, and derive consequences for contact surgeries on
  such manifolds.\newline 
  The appendix by Michael Hutchings gives an alternative proof of
  our cobordism obstructions in dimension three 
  using a refinement of the contact invariant
  in Embedded Contact Homology.
\end{abstract}

\maketitle

\section{Introduction}

\subsection{Main results}

Symplectic field theory (SFT) is a very general theory of holomorphic
curves in symplectic manifolds which was outlined by Eliashberg,
Givental and Hofer \cite{SFT}, and whose analytical foundations
are currently under development by Hofer, Wysocki and Zehnder, 
cf.~\cite{Hofer:polyfolds}.  It contains as special cases several theories
that have been shown to have powerful consequences in contact 
topology---notably contact homology and Gromov-Witten theory---but 
the more elaborate structure of ``full'' SFT
has yet to find application, as it is usually far too complicated to
compute.  Our goal here is to introduce a numerical invariant, which
we call \emph{algebraic torsion}, that is extracted from the full SFT
algebra and whose finiteness gives obstructions to the existence of
symplectic fillings and exact symplectic cobordisms. 
Algebraic torsion is defined in all dimensions, and we illustrate its
effectiveness by proving explicit nonexistence results for exact
symplectic cobordisms whose ends are certain prescribed nonfillable
contact $3$-manifolds, see Corollary~\ref{cor:noExact} below. To
the best of our knowledge, results of this type are new and seem to be
beyond the present reach of more topologically oriented
methods such as Heegaard Floer homology.

From the point of view taken in this paper, which is 
adapted from \cite{CieliebakLatschev:propaganda} and 
described in more
detail in \S\ref{sec:SFT}, the SFT of a contact manifold $(M,\xi)$
is the homology $H^\SFT_*(M,\xi)$ of a $\ZZ_2$-graded $BV_\infty$-algebra
$(\aA[[\hbar]],\Dsft)$, where $\aA$ has generators $q_\gamma$
for each good closed Reeb orbit~$\gamma$ 
with respect to some nondegenerate contact form
for $\xi$, $\hbar$ is an even variable, and the operator 
$$
\Dsft: \aA[[\hbar]] \to \aA[[\hbar]]
$$
is defined by counting rigid solutions to a suitable abstract 
perturbation of a $J$-holomorphic curve equation in the symplectization
of $(M,\xi)$. The domains for these solutions are punctured
closed Riemann surfaces, and near the punctures the
solutions have so-called positive or negative cylindrical ends. It
follows from the exactness of the symplectic form in the
symplectization that all such curves must have at least
one  positive end. Algebraically, this translates into the fact that
the ground ring 
$\RR[[\hbar]]$ of $\aA$ consists of closed elements with respect to
$\Dsft$. This motivates the following:
\begin{defn}
Let $(M,\xi)$ be a closed manifold of dimension $2n-1$ with a
positive, co-oriented contact structure.  For any integer $k \ge 0$,
we say that $(M,\xi)$ has \emph{algebraic torsion of order~$k$} (or simply
\emph{algebraic $k$-torsion}) if $[\hbar^k] = 0$ in $H^\SFT_*(M,\xi)$.
\end{defn}

Note that although the version of SFT described in
\cite{SFT} has coefficients in the group ring of $H_2(M)$, the homology
$H^\SFT_*(M,\xi)$ above is defined without group ring coefficients---one can
always do this at the cost of reducing the usual $\ZZ$-grading to
a $\ZZ_2$-grading (see \S\ref{sec:SFT} for details).  
We will introduce group ring coefficients later
to obtain a more refined invariant, cf.~Definition~\ref{defn:twistedTorsion}.

In order to state our first main result, we need a few standard
concepts.
Recall that a strong symplectic filling of a contact manifold $(M,\xi)$ is a
compact symplectic manifold $(W,\omega)$ with $\p W = M$ for which there
exists a vector field $Y$, defined near the boundary and
pointing transversely outward there, 
with $\Lie_Y\omega = \omega$ (i.e.~$Y$ is
a \emph{Liouville} vector field) and such that
$\iota_Y\omega|_M$ is a contact form for $\xi$ giving the correct
co-orientation.  More generally, a symplectic cobordism with 
positive end $(M^+,\xi^+)$ and negative end $(M^-,\xi^-)$ is a compact
symplectic manifold $(W,\omega)$ with boundary $M^+ \sqcup (-M^-)$ and a
vector field as above 
with $\xi^\pm = \ker\left( \iota_Y\omega|_{M^\pm}\right)$, 
with the difference that $Y$ is required to point outward
only along $M^+$ and inward along $M^-$.  
Note that
since $\Lie_Y\omega = d(\iota_Y\omega) = \omega$, the symplectic form
is always exact near the boundary of a symplectic cobordism, though
it need not be exact globally.  The flow of $Y$ can be 
used to identify a neighborhood of $\p W$ with 
$$
([0,\epsilon) \x M^-, d(e^s (\iota_Y\omega)|_{M^-})) 
\sqcup ((-\epsilon,0] \x M^+,d(e^s(\iota_Y\omega)|_{M^+})),
$$
and so any symplectic
cobordism in the above sense can be {\em completed} by gluing 
a positive half of the symplectization of $(M^+,\xi^+)$ and a negative
half of the symplectization of $(M^-,\xi^-)$ to the respective
boundaries. Holomorphic curves in completed symplectic cobordisms are
the main object of study in SFT, with the symplectization $\RR \x M$
being an important special case of a completed symplectic cobordism.

A symplectic cobordism $(W,\omega)$ is called \emph{exact} if
the vector field~$Y$ as described above extends globally over~$W$;
equivalently, this means $\omega = d\lambda$ for a $1$-form $\lambda$ on~$W$
whose restrictions to $M^\pm$ define contact forms for $\xi^\pm$.
From the above definition of algebraic torsion
and the general formalism of SFT, we draw the following
  consequence, which is our first main result and is proven in
\S\ref{sec:SFT}.

\begin{thm}
\label{thm:obstructions}
If $(M,\xi)$ has algebraic torsion then it is not strongly fillable.
Moreover, suppose there is an exact symplectic cobordism having
contact manifolds $(M^+,\xi^+)$ and $(M^-,\xi^-)$ as positive and
negative ends respectively: then if $(M^+,\xi^+)$ has algebraic
$k$-torsion, so does $(M^-,\xi^-)$.
\end{thm}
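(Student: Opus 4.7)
The plan is to reduce both assertions to the existence of a natural $\RR[[\hbar]]$-linear chain map on SFT complexes induced by a cobordism. The strong fillability statement then follows by interpreting a filling as a cobordism whose negative end is the empty contact manifold (for which the SFT complex is $(\RR[[\hbar]],0)$).

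The construction of the cobordism map proceeds as follows. Given an exact cobordism $(W, d\lambda)$ with ends $(M^\pm, \xi^\pm)$, choose a compatible almost complex structure $J$ on the completion $\widehat W$ that matches the symplectization almost complex structures $J^\pm$ on the cylindrical ends used to define $\Dsft^\pm$. Following the BV$_\infty$ packaging of SFT recalled in Section~\ref{sec:SFT} (after Cieliebak-Latschev), I would assemble the signed counts of rigid (virtual dimension zero) solutions of a suitable abstract perturbation of $\dbar_J u = 0$ on $\widehat W$, with positive ends on an orbit set $\Gamma^+$ and negative ends on $\Gamma^-$, weighted by $\hbar^g$ for genus $g$, into a map
$$
\Phi : \aA^+[[\hbar]] \longrightarrow \aA^-[[\hbar]].
$$
Boundary analysis of index-one moduli spaces in $\widehat W$ gives the chain map identity $\Dsft^- \circ \Phi = \Phi \circ \Dsft^+$. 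Since $\hbar$ records only genus and the cobordism introduces no extra $\hbar$-dependence, $\Phi$ is $\RR[[\hbar]]$-linear; and since an exact cobordism admits no non-constant closed $J$-holomorphic curves in $\widehat W$, the standard SFT normalization yields $\Phi(1)=1$, so $\Phi(\hbar^k)=\hbar^k$ for every $k\ge 0$.

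The transport of torsion is then immediate: if $\hbar^k = \Dsft^+ x$ in $\aA^+[[\hbar]]$, then
$$
\hbar^k \;=\; \Phi(\hbar^k) \;=\; \Phi(\Dsft^+ x) \;=\; \Dsft^- \Phi(x),
$$
so $[\hbar^k] = 0$ in $H^\SFT_*(M^-,\xi^-)$. For the strong filling statement, apply the same recipe with $(M^-,\xi^-)=\emptyset$: the target becomes $(\RR[[\hbar]], 0)$, the induced $\Phi$ is an augmentation $\epsilon$ with $\epsilon(1)=1$, and the identical computation forces $\hbar^k = 0$ in $\RR[[\hbar]]$, a contradiction.

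The hard part is the construction and algebraic properties of $\Phi$ itself. Transversality for the abstract perturbation requires the SFT polyfold technology of Hofer-Wysocki-Zehnder, and the chain map identity requires the SFT compactness theorem of Bourgeois-Eliashberg-Hofer-Wysocki-Zehnder, guaranteeing that only the standard SFT breakings contribute to the boundary of index-one moduli spaces. Exactness enters precisely at this compactness step, where $\lambda$ and Stokes' theorem produce an a priori $\omega$-energy bound on curves in $\widehat W$ in terms of the difference of asymptotic orbit actions. For the strong (but not necessarily exact) filling case, $\lambda$ exists only near $\partial W$, yet Gromov compactness still confines curves of fixed virtual dimension to finitely many relative homology classes in $\widehat W$, so the counts remain well-defined with ordinary $\RR[[\hbar]]$ coefficients without any need for Novikov-type refinements.
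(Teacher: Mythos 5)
Your high-level strategy is the paper's: build an $\RR[[\hbar]]$-linear chain map $\Phi$ with $\Phi(1)=1$ out of the cobordism, then transport the relation $\Dsft^+ x = \hbar^k$ either into a torsion relation at the negative end (for exact cobordisms) or into a contradiction in the target ring (for fillings). For the exact cobordism half your argument is exactly Proposition~\ref{prop:exactCobordisms} applied to $\hbar^k$, and your remarks about exactness --- vanishing of the twisting term from closed curves, $\Phi(1)=1$ from the normalization $\phi(1)=0$, a priori energy control via Stokes --- are all correct.

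The gap is in the final paragraph, where you assert that for a strong (not necessarily exact) filling the counts are well-defined over $\RR[[\hbar]]$ because ``Gromov compactness still confines curves of fixed virtual dimension to finitely many relative homology classes.'' This is false in general. For a non-exact filling the energy $\int u^*\omega$ is \emph{not} determined by the positive asymptotics; it depends on the relative homology class of $u$, and the classes of a given virtual index carrying rigid curves can form an infinite set with unbounded $\omega$-area (for instance whenever $c_1(TW)$ vanishes on an infinite subgroup of $H_2(W;\RR)$ on which $\omega$ is unbounded, the virtual index is constant over that subgroup). Without an a priori energy bound, Gromov compactness gives nothing, so your augmentation-to-$\RR[[\hbar]]$ may literally fail to converge. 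This is precisely why Proposition~\ref{prop:strongFillings} targets the Novikov completion $\overline{R_{\rR(X)}}[[\hbar]]$ of $\RR[H_2(X;\RR)/\ker\omega][[\hbar]]$ rather than $\RR[[\hbar]]$: the infinite sums converge in the $\omega$-energy filtration, and the desired conclusion still follows because no nonzero power of $\hbar$ vanishes there either. The error is localized and the fix coincides with the paper's construction, but as written the claim that Novikov-type refinements are unnecessary is incorrect and the strong-filling case does not reduce to the exact one.
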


\begin{remark}
\label{remark:disclaimer}
It is time for a more or less standard disclaimer:
All the theorems regarding SFT that we shall state
in this introduction depend on the
analytical foundations of SFT, which remains a large project in progress
by Hofer, Wysocki and Zehnder (see e.g.~\cite{Hofer:polyfolds}).
In particular, the main technical difficulty which is the subject of their
work is to establish a sufficiently well behaved abstract perturbation
scheme so that $H_*^\SFT(M,\xi)$ is well defined and the natural maps
induced by counting solutions to a perturbed holomorphic curve equation
in symplectic cobordisms exist.  We shall take it for granted throughout
the following that such a perturbation scheme exists and has the
properties that its architects claim 
(cf.~Remark~\ref{remark:abstract})---the further details of this scheme 
will be irrelevant to our arguments.  Note however that our main
applications, Corollaries~\ref{cor:noExact} 
and~\ref{cor:noExactGeometric}, can also be proved using the
Embedded Contact Homology techniques described in the appendix
(cf.~Theorem~\ref{thm:ECH}), and
thus do not depend on any unpublished work in progress.
\end{remark}
\begin{remark}
\label{remark:Weinstein}
Algebraic torsion has some obvious applications beyond those that
we will consider in this paper, e.g.~it
is immediate from the formalism of SFT discussed in \S\ref{sec:SFT}
that any contact manifold
with algebraic torsion satisfies the Weinstein conjecture.
\end{remark}

The simplest example of algebraic torsion is the case $k=0$: we will
show in \S\ref{sec:SFT} (Proposition~\ref{prop:algOT}) that this is equivalent
to $(M,\xi)$ having trivial contact homology, in which case it 
is called \emph{algebraically overtwisted},
cf.~\cite{BourgeoisNiederkrueger:algebraically}.
This is the case, for instance, whenever $(M,\xi)$ is an
overtwisted contact $3$-manifold, and in higher dimensions it has been
shown to hold whenever $(M,\xi)$ contains a \emph{plastikstufe}
\cite{BourgeoisNiederkrueger:PS}, or when $(M,\xi)$ is a connected sum with
a certain exotic contact sphere \cite{BourgeoisVanKoert}.

In dimension three, there are also many known examples of contact manifolds
that are tight but not fillable.  An important class of examples is the following:
$(M,\xi)$ is said to have \emph{Giroux torsion} if it admits a contact
embedding of $(T^2 \times [0,1], \xi_T)$ where
$$
\xi_T = \ker \left[ \cos(2\pi t) \ d\theta + \sin(2\pi t)\ d\phi \right]
$$
in coordinates $(\phi,\theta,t) \in T^2 \times [0,1] = S^1 \times S^1 \times
[0,1]$.  
It was shown by D.~Gay \cite{Gay:GirouxTorsion} that contact 
$3$-manifolds with Giroux torsion are never strongly fillable, and a
computation of the twisted Ozsv\'ath-Szab\'o contact invariant due to
Ghiggini and Honda \cite{GhigginiHonda:twisted} shows that
Giroux torsion is also an obstruction to weak fillings whenever
the submanifold $T^2 \times [0,1] \subset M$ separates~$M$.
There are obvious examples of manifolds with these properties
that are also tight.  On $T^3 = S^1 \times S^1 \times S^1$
for example with coordinates $(\phi,\theta,t)$, the contact form
$$
\cos(2\pi N t) \ d\theta + \sin(2\pi N t) \ d\phi
$$
has Giroux torsion for any integer $N \ge 2$, but it also has no contractible
Reeb orbits, which implies that its contact homology cannot vanish.
The original motivation for this project was to find an algebraic 
interpretation of Giroux torsion that implies nonfillability.  
The solution to this problem is the following result, which is implied by
the more general Theorem~\ref{thm:torsion} below:

\begin{thm}
\label{thm:Giroux}
If $(M,\xi)$ is a contact $3$-manifold with Giroux torsion, then it has
algebraic $1$-torsion.
\end{thm}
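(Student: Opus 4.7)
The plan is to prove algebraic $1$-torsion by exhibiting an explicit element $F\in\aA[[\hbar]]$ with $\Dsft F=\hbar$, so that $[\hbar]=0$ in $H^\SFT_*(M,\xi)$. All relevant holomorphic curves will be confined to the Giroux torsion domain $N=T^2\times[0,1]\subset M$ by an action filtration, reducing the proof to a local computation in the symplectization of $N$.

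First I would choose a contact form $\alpha$ for $\xi$ whose restriction to $N$ is (a small perturbation of) $\cos(2\pi t)\,d\theta+\sin(2\pi t)\,d\phi$, so that the only short Reeb orbits supported in $N$ lie on pre-Lagrangian Morse-Bott tori on which the Reeb field points along an integer direction of $T^2$. Outside $N$ the form is nondegenerate, with an action gap placing all other orbits above the action window of interest. Standard Morse-Bott perturbation (after Bourgeois) then replaces each such Morse-Bott torus by one simple elliptic orbit $e$ and one simple hyperbolic orbit $h$.

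Next I would analyze the $J$-holomorphic curves in $\RR\times N$ for a standard symplectization almost complex structure. An explicit ODE computation---writing cylinders as graphs $(s,\theta)\mapsto(a(s),\theta,\phi_0,b(s))$ and solving the Cauchy-Riemann equations in the ansatz---produces an $S^1$-family (parametrized by $\phi_0$) of genus-zero holomorphic cylinders with \emph{two positive punctures}, asymptotic to Reeb orbits on distinct Morse-Bott tori of the torsion region. Automatic transversality in dimension four (Wendl) gives regularity, and after Morse-Bott perturbation this $S^1$-family collapses onto the critical points of the auxiliary Morse function, yielding a finite signed count of rigid two-positive-ended cylinders with ends drawn from $\{e,h\}$ on the two tori. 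Under the convention that a genus-$g$ curve with $s^+$ positive punctures contributes weight $\hbar^{g+s^+-1}$ to $\Dsft$, each such cylinder contributes a summand of the form $\hbar\,\partial_{q_{\gamma_1}}\partial_{q_{\gamma_2}}$.

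Finally I would combine these curve counts with the standard Morse-Bott cascade contributions to $\Dsft$ and read off an explicit polynomial $F$ in the generators $q_e,q_h$---the natural candidate is $F=q_{\gamma_1}q_{\gamma_2}$, where $\gamma_1,\gamma_2$ are the Reeb orbits at the rigid cylinder's two positive ends---for which $\Dsft F=\hbar$. The main obstacle is controlling the full moduli contributing to $\Dsft F$: one needs SFT compactness together with the action filtration to rule out holomorphic buildings escaping $N$, automatic transversality to keep the perturbed family rigid and regular, and careful bookkeeping of coherent orientations so that the $\hbar$ term survives rather than cancels. This part of the argument refines Wendl's holomorphic foliation construction for Giroux torsion; the new content is the repackaging of that geometric input into the single algebraic identity $\Dsft F=\hbar$ that algebraic $1$-torsion demands.
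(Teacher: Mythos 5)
Your strategy---exhibiting $F\in\aA[[\hbar]]$ with $\Dsft F=\hbar$, sourced from a rigid genus-$0$ cylinder with two positive ends inside the Giroux torsion region, after Morse--Bott perturbation of a foliated model---is exactly the geometric mechanism the paper uses. The paper routes Theorem~\ref{thm:Giroux} through the more general Theorem~\ref{thm:torsion} and the planar torsion framework (Giroux torsion gives a planar $1$-torsion domain with cylindrical pages, i.e.\ the case $m=0$, $n=2$, $r=0$ of that proof, so that $F = q_{\gamma_1^h}q_{\gamma_2^e}$), but specializing to the Giroux torsion domain directly as you do is a legitimate short-cut; the identity $\Dsft F = \hbar$ and the weight $\hbar^{g+s^+-1}=\hbar$ for your cylinder are both right.

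There is, however, a genuine gap in your mechanism for ruling out unwanted contributions to $\Dsft F$. You claim that ``an action filtration'' confines the relevant holomorphic buildings to $\RR\times N$. Action monotonicity only controls the \emph{asymptotic orbits} of a curve (and of the intermediate levels of a building); it says nothing about the geometric support of the map. A finite-energy curve whose positive and negative asymptotics all lie in the torsion region can still sweep through the entire complement of $N$ in $M$, so the computation does not localize to the symplectization of $N$ by action bounds alone. What the paper actually uses is the global uniqueness statement in Proposition~\ref{prop:openbook2}(5) (from \cite{Wendl:openbook2}): for the specially constructed Giroux form and compatible $J$, \emph{every} finite-energy $J$-holomorphic curve in $\RR\times M$ with simply covered positive asymptotics at the relevant interface/boundary orbits (with the stated multiplicities, and not a cover of a trivial cylinder) must have genus zero and must parametrize one of the explicit pages. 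This uniqueness is proved via positivity of intersections against the holomorphic foliation and is the real geometric content of the argument; your appeal to ``refining Wendl's holomorphic foliation construction'' is gesturing at the right place, but as written the reduction to a local computation does not follow. Similarly, to conclude $\Dsft F = \hbar$ on the nose you also need the untwisted cancellation $(z^{[T_i]}-1)=0$ of the Bourgeois flow-line cylinders' contributions, which is implicit in your Morse--Bott bookkeeping but should be made explicit, and your $F=q_{\gamma_1}q_{\gamma_2}$ should be pinned down as the product of one hyperbolic and one elliptic generator on the two interface tori so that the main curve has Fredholm index $1$.
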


While it is possible that ``overtwisted'' and
``algebraically overtwisted'' could be equivalent notions in dimension three,
it turns out that the converse of Theorem~\ref{thm:Giroux} is not true.  
We will show this using a special class of contact manifolds
constructed as follows: assume $\Surface_+$ and $\Surface_-$ are compact (not
necessarily connected) oriented surfaces with nonempty diffeomorphic
boundaries, and denote by 
$$
\Surface = \Surface_+ \cup \Surface_-
$$
the closed oriented surface obtained by gluing them along some orientation
reversing diffeomorphism $\p \Surface_+ \to \p \Surface_-$.  We shall assume~$\Surface$ to be
connected.  The common boundary of
$\Surface_\pm$ forms a multicurve $\Gamma \subset \Surface$.  Then by a construction
originally due to Lutz \cite{Lutz:77}, the product $S^1 \times \Surface$
admits a unique (up to isotopy) $S^1$-invariant contact 
structure~$\xi_\Gamma$ for which the loops $S^1 \times \{z\}$ are 
positively/negatively transverse for~$z$ in the interior of $\Surface_\pm$, and
Legendrian for $z \in \Gamma$.  
(We will give a more explicit construction of this contact structure
in \S\ref{sec:higherOrder}.)  By an argument due to Giroux
(see \cite{Massot:vanishing}), $(S^1 \times \Surface,\xi_\Gamma)$ has no
Giroux torsion whenever it has the following two properties:
\begin{itemize}
\item No connected component of~$\Gamma$ is contractible in~$\Surface$,
\item No two connected components of~$\Gamma$ are isotopic in~$\Surface$.
\end{itemize}
It is easy to find examples (see Figure~\ref{fig:disconnected})
for which both these conditions are
satisfied, as well as the assumption in the following result:

\begin{thm}
\label{thm:noGiroux}
If either of $\Surface_+$ or $\Surface_-$ is disconnected, then the $S^1$-invariant
contact manifold $(S^1 \times \Surface,\xi_\Gamma)$ described above has
algebraic $1$-torsion.  In particular, there exist contact $3$-manifolds
that have algebraic $1$-torsion but no Giroux torsion.
\end{thm}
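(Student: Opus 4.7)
The plan is to produce an explicit element $A \in \aA[[\hbar]]$ with $\Dsft A = \hbar$, by analyzing holomorphic curves in the symplectization of $(S^1 \times \Surface, \xi_\Gamma)$ that arise from an $S^1$-invariant setup. First I would fix an $S^1$-invariant contact form $\alpha_\Gamma$ whose closed Reeb orbits of action below some threshold all lie over $\Gamma$, appearing in $S^1$-Morse-Bott families indexed by the components $\Gamma_1,\ldots,\Gamma_r$ of $\Gamma$. After a standard Morse-Bott perturbation, each family contributes two nondegenerate orbits, an elliptic $e_i$ and a hyperbolic $h_i$. Then I would pick a generic $S^1$-invariant compatible almost complex structure on $\RR \times S^1 \times \Surface$, so that finite-energy $J$-holomorphic curves of bounded area reduce, under the quotient by $S^1$, to meromorphic maps of punctured Riemann surfaces into $\Surface$ with poles and zeros over $\Gamma$; the resulting rigid (punctured) curves are precisely the connected components of $\Surface_+$ and $\Surface_-$, each viewed as a punctured Riemann surface asymptotic to the $e_i$'s or $h_i$'s lying over its boundary circles.

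The algebraic heart of the proof is then to exploit the disconnectedness hypothesis. Say $\Surface_+ = \Surface_+^1 \sqcup \Surface_+^2 \sqcup \ldots$ splits into at least two pieces while $\Surface_-$ may be connected. Each piece $\Surface_+^j$ yields a rigid genus-$g(\Surface_+^j)$ curve contributing $\hbar^{g(\Surface_+^j)}$ times a monomial in the $q_{e_i}$ and $q_{h_i}$, and similarly for the $\Surface_-$-side. I would then form a carefully chosen product $A$ of the $q_{h_i}$-generators (so that all but one slot of the $\Dsft$-action is filled by negative punctures) and compute $\Dsft A$. Using the fact that $\Surface = \Surface_+ \cup \Surface_-$ has genus at least $1$ whenever one of $\Surface_\pm$ is disconnected --- or more accurately, that the only way to compatibly match up the asymptotic orbits of a $\Surface_+^j$-curve with those of a $\Surface_-$-curve creates an extra handle in the combined geometric configuration --- the leading contribution to $\Dsft A$ becomes a rigid genus-one connected curve with no free punctures, which by the BV formalism is exactly $\hbar \cdot 1$. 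Cancellations between elliptic/hyperbolic pairs at the remaining matchings, together with standard multiple-cover contributions that vanish for degree reasons, would leave $\Dsft A = \hbar$.

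The main obstacle will be technical: verifying transversality for the $S^1$-invariant moduli spaces (necessarily through the polyfold-style abstract perturbation scheme, after symmetry breaking in the Morse-Bott families) and, more importantly, carrying out the signed count correctly so that the promised cancellations really occur and no unexpected higher-genus curves spoil the equation $\Dsft A = \hbar$. This is where I expect the argument will naturally fit into the framework of the more general Theorem~\ref{thm:torsion}, which should provide a geometric ``planar $1$-torsion'' criterion that the disconnected configuration satisfies. Finally, for the ``in particular'' assertion, I would exhibit an explicit example as in Figure~\ref{fig:disconnected}: a closed surface $\Surface$ together with a multicurve $\Gamma$ whose components are all non-contractible and pairwise non-isotopic in $\Surface$, yet with $\Surface_+$ (say) disconnected. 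Giroux's criterion quoted just above the theorem then rules out Giroux torsion in $(S^1 \times \Surface, \xi_\Gamma)$, while the first part of the theorem supplies algebraic $1$-torsion, giving the desired separation between the two notions.
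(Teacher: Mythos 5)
Your proposal diverges fundamentally from the paper's argument, and as it stands it would not establish the theorem in the claimed generality.

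The paper proves Theorem~\ref{thm:noGiroux} using the Morse-theoretic model constructed in \S\ref{sec:higherOrder}: one chooses the special contact form $\lambda_\epsilon$ of \eqref{eqn:lambdaEpsilon}, built from Morse functions $h_\pm$ on $\Surface_\pm$, so that the relevant low-action Reeb orbits are the circles $\gamma_z = S^1 \times \{z\}$ over the \emph{critical points} $z \in \Crit(h_\epsilon)$, and the low-index holomorphic curves are the cylinders $u_x$ lying over gradient flow lines $x$ of $h_\epsilon$ (Propositions~\ref{prop:Jcomplex} and~\ref{prop:allCylinders}). Disconnectedness of, say, $\Surface_-$ is used to arrange that $h_-$ has a unique index-$0$ minimum $z_-^1$ in a component $\Surface_-^1$ and that some saddle $z_+ \in \Surface_+$ has both of its descending flow lines ending at distinct minima $z_-^1, z_-^2$ in different components. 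The unique flow line $x_1$ from $z_-^1$ to $z_+$ then gives a genus-$0$ cylinder $u_{x_1}$ with \emph{two positive ends} and no negative ends, contributing $\hbar^{0+2-1} = \hbar$ to $\Dsft(q_{\gamma_{z_-^1}}q_{\gamma_{z_+}})$; the remaining single-positive-end cylinders out of $z_-^1$ cancel in pairs by a Morse-cohomological argument. In contrast, your setup hinges on orbits lying over $\Gamma$ and on a conjectured rigid genus-$1$ curve with one positive end. That is effectively the planar-torsion picture of Theorem~\ref{thm:torsion}, but it cannot cover the full hypothesis: for a planar-torsion argument to yield order-$1$ torsion, some component of $\Surface_\pm$ must be a genus-$0$ surface with exactly two boundary circles, and mere disconnectedness of one side gives no such constraint. (In the cases covered by Theorem~\ref{thm:torsion} the key curve is a genus-$0$ page of the planar piece, not a genus-$1$ curve.) Moreover, in the paper's model no such genus-$1$ curve exists to count: Proposition~\ref{prop:allCylinders} classifies all relevant curves as covers of the genus-$0$ cylinders $u_x$, so the contribution you hope for would have to come from elsewhere. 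The ``in particular'' part of your plan, via Giroux's criterion and Figure~\ref{fig:disconnected}, is fine, but the core existence-of-$\hbar$-primitive step needs the gradient-flow cylinder argument, not a genus-$1$ page count.
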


\begin{remark}
Theorem~\ref{thm:obstructions} implies 
that the examples in Theorem~\ref{thm:noGiroux} 
are not strongly fillable.  The latter has been established previously
via vanishing results for the Ozsv\'ath-Szab\'o contact invariant in
sutured Floer homology, see \cites{HondaKazezMatic,Massot:vanishing,
Mathews:torsion}.
\end{remark}

\begin{figure}
\includegraphics{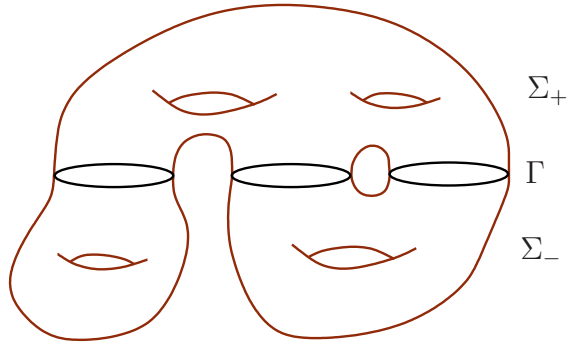}
\caption{\label{fig:disconnected}
A surface $\Surface = \Surface_+ \cup_\Gamma \Surface_-$ such that $(S^1 \times \Surface,\xi_\Gamma)$
has algebraic $1$-torsion but no Giroux torsion.}
\end{figure}

Examples showing that algebraic torsion is interesting for all 
orders can be constructed in almost the same way. In the
construction of $S^1$-invariant contact manifolds $(S^1 \times
\Surface,\xi_\Gamma)$ above, assume that $\Surface_\pm$ are both connected with $k
\geq 1$ boundary components, and that $\Surface_-$ has genus 0 and $\Surface_+$ has
genus $g'>0$. The surface $\Surface$ obtained by gluing will have genus $g=g'+k-1$.
We denote the resulting contact manifold by $(V_g,\xi_k):=(S^1 \times
\Surface,\xi_\Gamma)$.  We then obtain:

\begin{thm}
\label{thm:higherOrder}
$(V_g,\xi_k)$ has algebraic torsion of order $k-1$, but not $k-2$.
\end{thm}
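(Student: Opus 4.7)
The proof combines an upper bound $[\hbar^{k-1}] = 0$ in $H^\SFT_*(V_g,\xi_k)$ with a lower bound $[\hbar^{k-2}] \ne 0$ (understood as vacuous for $k = 1$). The plan is to establish the upper bound by constructing an explicit rigid holomorphic building in the symplectization of $(V_g,\xi_k)$ whose count realizes $\hbar^{k-1}$ as a $\Dsft$-boundary, and to establish the lower bound either through a direct moduli analysis ruling out rigid configurations contributing $\hbar^{k-2}$, or via an exact symplectic cobordism to a contact manifold where non-vanishing of $[\hbar^{k-2}]$ can be verified and Theorem~\ref{thm:obstructions} invoked.

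For the upper bound, I would exploit the $S^1$-invariant structure of $\xi_\Gamma$ on $V_g = S^1 \times \Surface$, following the strategy already used in the proofs of Theorems~\ref{thm:Giroux} and~\ref{thm:noGiroux}. After a Morse--Bott perturbation of the contact form adapted to the decomposition $\Surface = \Surface_+ \cup_\Gamma \Surface_-$, the symplectization admits natural families of pseudoholomorphic curves whose projections to $\Surface$ are arcs joining components of $\Gamma$, with asymptotic limits at the binding orbits $S^1 \times \{\text{components of }\Gamma\}$. The crucial combinatorial input is that $\Surface_-$ is a genus-$0$ surface with $k$ boundary components: I would use this to assemble, from the Morse--Bott moduli over $\Surface_-$ together with compatible caps coming from the positive-genus side $\Surface_+$, a rigid holomorphic building whose Fredholm index and $\hbar$-weight realize a relation $\Dsft c = \hbar^{k-1}$ for a suitable $c \in \aA[[\hbar]]$. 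This is a direct extension of the case $k = 1$ handled in Theorem~\ref{thm:noGiroux}, with the extra boundary components of $\Surface_-$ accounting for the increase in the $\hbar$-power.

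For the lower bound, I would first attempt a direct analysis of the Fredholm and Morse--Bott structure of all potential rigid buildings in the symplectization that could contribute $\hbar^{k-2}$ to $\Dsft$, showing that the positive genus of $\Surface_+$ together with the boundary combinatorics of $\Surface_-$ preclude such rigid configurations. An alternative route, suggested by the introduction's Definition~\ref{defn:twistedTorsion}, is to work with a refined algebraic torsion having group-ring coefficients and to detect $[\hbar^{k-2}] \ne 0$ on a finite cover of $(V_g,\xi_k)$ carrying a nondegenerate contact form with suitably controlled low-action Reeb dynamics; the cobordism inherited from the cover then obstructs $(k-2)$-torsion in $(V_g,\xi_k)$ via Theorem~\ref{thm:obstructions}. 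The main obstacle is this lower bound: the moduli or cobordism argument must be sharp enough to distinguish the algebraic torsion orders $k-1$ and $k-2$, whereas the upper bound is essentially a combinatorial extension of constructions already appearing elsewhere in the paper.
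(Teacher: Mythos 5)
Your two-sided plan matches the paper's structure, and your description of the upper bound is essentially right: it is an application of Theorem~\ref{thm:torsion} to the observation that $(V_g,\xi_k)$ is a planar $(k-1)$-torsion domain, and the underlying mechanism is the Morse--Bott rigid-building count you sketch (rather than a ``direct extension'' of Theorem~\ref{thm:noGiroux}, which handles a disconnected case). The real content is the lower bound, and here your branch (a) is the right general direction but misidentifies what drives the argument and omits the steps that make it work. The mechanism is \emph{cancellation}, not preclusion: index-$1$ buildings with no negative ends and at most $k-1$ positive ends do exist in the symplectization of the paper's explicit model (they are covers $u_x^n$ of cylinders over gradient flow lines of a Morse function $h_\epsilon$ on $\Surface$), but their algebraic count is zero because $\Surface_\pm$ are connected, so $h_\epsilon$ has a unique index-$0$ and a unique index-$2$ critical point and the relevant Morse (co)homology differentials vanish. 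Your emphasis on the ``positive genus of $\Surface_+$'' is misplaced for this half: what the lower bound actually uses is connectedness of both $\Surface_\pm$ and the fact that $\Gamma$ has $k$ components; the genus of $\Surface_+$ only serves to make the summed open book non-symmetric, which is needed for the \emph{upper} bound.

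Four missing ingredients make the proposal incomplete as a proof. (i) You need the algebraic reduction Lemma~\ref{lemma:algHyperTorsionFree}, which translates vanishing of those geometric counts into the non-existence of $Q$ with $\Dsft(Q) = \hbar^{k-2} + \Order(\hbar^{k-1})$; without it the moduli analysis never touches $H^\SFT_*$. (ii) You need the homological argument that no asymptotic orbit of such a building lies in the interface region $S^1 \times \uU$: since $\Gamma$ has $k$ components but the building has at most $k-1$ positive ends, the projection to $\Surface$ cannot bound the required multicurve in all the interface tori, and this is exactly where the integer $k$ enters. (iii) The classification of all candidate curves as covers of gradient cylinders (Proposition~\ref{prop:allCylinders}) relies on Siefring's intersection theory, not merely on Fredholm and Morse--Bott index bookkeeping. (iv) The Reeb dynamics are only controlled below a finite action $T_\epsilon$, so the argument must run through the truncated complexes $\aA(\lambda,T)$ of Remark~\ref{rem:truncation} together with a family of exact cobordisms $X_\epsilon$ that transport a putative primitive $Q$ into a truncated complex where Lemma~\ref{lemma:algHyperTorsionFree} applies; this is a genuine subtlety and cannot be replaced by picking a single nondegenerate contact form. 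Your alternative branch (b), detecting $[\hbar^{k-2}]\neq 0$ on a finite cover and pulling back, does not appear in the paper and is not obviously coherent: Theorem~\ref{thm:obstructions} transports \emph{vanishing}, not nonvanishing, and only from the positive to the negative end of an exact cobordism.
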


The proof that $(V_g,\xi_k)$ has algebraic torsion of order~$k-1$ 
will be a consequence of Theorem~\ref{thm:torsion} below, which
relates algebraic torsion in dimension 3 to the geometric notion of
\emph{planar torsion} recently introduced by the second
author~\cite{Wendl:openbook2}. This is discussed in detail in
\S\ref{sec:planar}. The proof that there is no algebraic 
torsion of lower order occupies a large part of \S\ref{sec:higherOrder}. It
is based on a combination of algebraic properties of SFT and a
construction of certain explicit contact forms for the contact 
structures $\xi_k$, for which the Reeb dynamics and the holomorphic
curves can be understood sufficiently well.

Combining Theorems~\ref{thm:obstructions} and \ref{thm:higherOrder}
yields the following consequence.

\begin{cor}
\label{cor:noExact}
Suppose $g \ge k \ge 2$.  Then for any exact symplectic cobordism with
negative end $(V_g,\xi_k)$, the positive end does not have algebraic
$(k-2)$-torsion.\newline  
In particular, there exists no exact symplectic 
cobordism with positive end
$(V_{g_+},\xi_{k_+})$ and negative end $(V_{g_-},\xi_{k_-})$ if $k_+<k_-$
(Figure~\ref{fig:Vgk}).
\end{cor}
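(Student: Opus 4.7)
The plan is to deduce this corollary formally from Theorems~\ref{thm:obstructions} and~\ref{thm:higherOrder}. The single new ingredient I would record is the elementary monotonicity that algebraic $k$-torsion implies algebraic $k'$-torsion for every $k' \ge k$: since $\hbar$ lies in the ground ring $\RR[[\hbar]] \subset \aA[[\hbar]]$, multiplication by any power of $\hbar$ preserves the subspace of $\Dsft$-exact elements, so a relation $[\hbar^k] = 0$ in $H^\SFT_*(M,\xi)$ automatically propagates to $[\hbar^{k'}] = 0$ for all $k' \ge k$.

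With this in hand I would prove the main statement by contradiction. Suppose some exact symplectic cobordism has negative end $(V_g, \xi_k)$ and positive end $(M^+,\xi^+)$ where $(M^+,\xi^+)$ has algebraic $(k-2)$-torsion; the hypothesis $k \ge 2$ ensures that $k-2 \ge 0$ so that this notion is meaningful. By the cobordism half of Theorem~\ref{thm:obstructions}, the negative end $(V_g,\xi_k)$ must then also have algebraic $(k-2)$-torsion. But this directly contradicts the second assertion of Theorem~\ref{thm:higherOrder}, which says that $(V_g,\xi_k)$ does \emph{not} have algebraic $(k-2)$-torsion.

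To deduce the ``in particular'' clause, I would chain the pieces together. Theorem~\ref{thm:higherOrder} endows $(V_{g_+},\xi_{k_+})$ with algebraic $(k_+-1)$-torsion, and the hypothesis $k_+ < k_-$ rewrites as $k_+ - 1 \le k_- - 2$; the monotonicity observation from the first paragraph then promotes this to algebraic $(k_- - 2)$-torsion for the positive end. Applying the already-established first part of the corollary with $g = g_-$ and $k = k_-$ now produces the contradiction. I do not expect any genuine obstacle here, as this is purely a bookkeeping argument: all substantive content is packaged into Theorems~\ref{thm:obstructions} and~\ref{thm:higherOrder}, and the only conceptual point worth pausing over is the monotonicity of algebraic torsion in~$k$, which is what makes the reduction from $k_+ < k_-$ to algebraic $(k_--2)$-torsion of the positive end possible.
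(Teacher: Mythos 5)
Your argument is correct and is exactly the route the paper intends (the paper states Corollary~\ref{cor:noExact} as an immediate consequence of Theorems~\ref{thm:obstructions} and~\ref{thm:higherOrder} without writing out the bookkeeping). The monotonicity step you isolate is indeed used, and it is recorded explicitly in \S\ref{sec:SFT} of the paper right after Definition~\ref{defn:twistedTorsion}: ``$k$-torsion for any choice of coefficients implies $(k+1)$-torsion for the same coefficients since $\Dsft(Q)=\hbar^k$ implies $\Dsft(\hbar Q) = \hbar^{k+1}$,'' which is the same observation as your remark that $\Dsft$ commutes with $\hbar$.
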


\begin{figure}
\includegraphics{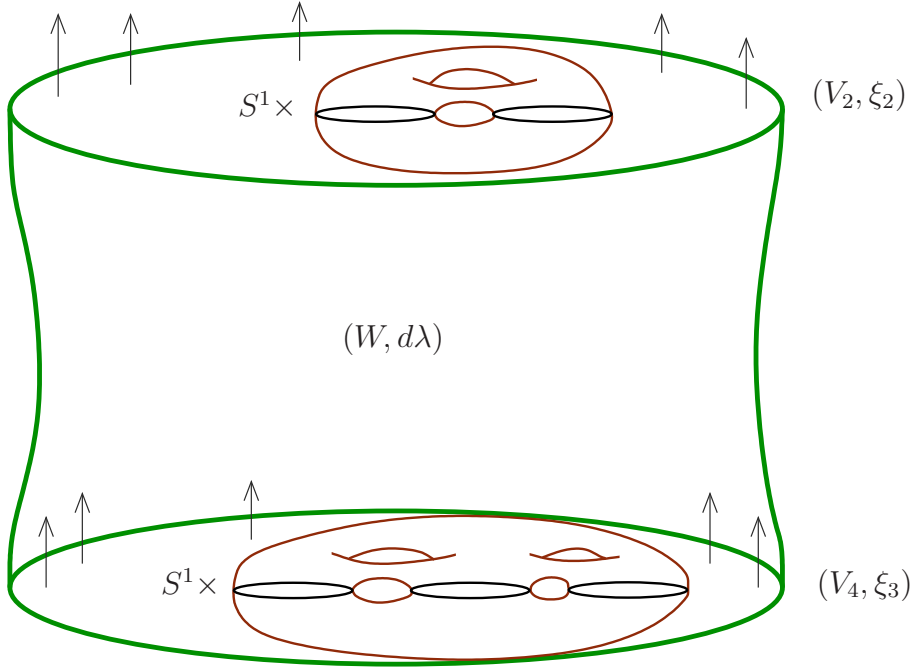}
\caption{\label{fig:Vgk}
An example of an exact symplectic cobordism that cannot
exist according to Corollary~\ref{cor:noExact}.}
\end{figure}

\begin{remark}
\label{remark:nonexact}
The inclusion of the word ``exact'' in the above corollary is crucial,
as a recent construction due to the second author \cite{Wendl:cobordisms}
shows that non-exact symplectic cobordisms exist between \emph{any} two
contact $3$-manifolds with planar torsion.
\end{remark}

\begin{remark}
\label{remark:Jeremy1}
Sometimes exact cobordisms are known to exist
when the negative end has a smaller order of algebraic torsion than the
positive end, e.g.~Etnyre and Honda \cite{EtnyreHonda:cobordisms} have
shown that \emph{any} positive end is allowed if the negative end
is overtwisted (meaning $0$-torsion, in the present context).  Similarly,
Jeremy Van Horn-Morris has explained to us that a Stein cobordism
with negative end $(V_g,\xi_k)$ and positive end $(V_{g+1},\xi_{k+1})$ 
does always exist; cf. Remark~\ref{remark:Jeremy2} in
\S\ref{sec:higherOrder} for an outline of the
construction. Together with Corollary~\ref{cor:noExact}, this 
gives infinite sequences of contact 3-manifolds such that each is
exactly cobordant to its successor, but not vice versa.
\end{remark}

\begin{remark}
\label{remark:Hofer}
The case $k_+ = 1$ of Corollary~\ref{cor:noExact} can be deduced already
from the argument used by Hofer \cite{Hofer:weinstein} to prove the
Weinstein conjecture for overtwisted contact structures.  Indeed,
$(V_{g_+},\xi_{k_+})$ is always overtwisted if $k_+ = 1$, 
and transplanting Hofer's argument from the symplectization to an 
exact symplectic cobordism shows that $(V_{g_-},\xi_{k_-})$ must then
have a contractible Reeb orbit for all nondegenerate contact forms,
which is easily shown to be false if $k_- \ge 2$.  In this sense, the
obstructions coming from algebraic torsion may be seen as a ``higher order''
generalization of Hofer's argument, which incidentally was the starting
point for the developement of SFT.  
\end{remark}

To obtain a more sensitive invariant, we now introduce
a more general notion of algebraic torsion using SFT with
group ring coefficients.  Namely, for any linear subspace
$\rR \subset H_2(M ; \RR)$, one can define the algebra of SFT with
coefficients in the group ring $\RR[H_2(M;\RR) / \rR]$, which means
keeping track of the classes in $H_2(M;\RR) / \rR$ represented by
the holomorphic curves that are counted. We shall denote the
SFT with corresponding coefficients by $H_*^\SFT(M,\xi;\rR)$.
The most important special cases are $\rR = H_2(M;\RR)$ and
$\rR = \{0\}$, called the \emph{untwisted} and \emph{fully twisted}
cases respectively, and $\rR = \ker\Omega$
with~$\Omega$ a closed $2$-form on~$M$.  We shall abbreviate the untwisted
case by $H_*^\SFT(M,\xi) = H_*^\SFT(M,\xi ; H_2(M;\RR))$, and often
write the case $\rR = \ker \Omega$ as
$$
H_*^\SFT(M,\xi,\Omega) := H_*^\SFT(M,\xi ; \ker\Omega).
$$

\begin{defn}
\label{defn:twistedTorsion}
If $(M,\xi)$ is a closed contact manifold, for any integer $k \ge 0$
and closed $2$-form $\Omega$ on~$M$ we say that $(M,\xi)$ has
\emph{$\Omega$}-twisted algebraic $k$-torsion if $[\hbar^k] = 0$ in
$H^\SFT_*(M,\xi,\Omega)$. If this is true for all~$\Omega$, or equivalently, if
$[\hbar^k] = 0$ in $H^\SFT_*(M,\xi ; \{0\})$, then we say that 
$(M,\xi)$ has \emph{fully twisted} algebraic $k$-torsion. 
\end{defn}

To see the significance of algebraic torsion with more general 
coefficients, we consider a more general notion of symplectic fillings,
for which the symplectic form need not be exact near the boundary.

\begin{defn}
\label{defn:stableFilling}
Suppose $(W,\omega)$ is a compact symplectic manifold with boundary
$\p W = M$, and $\xi$ is a positive (with respect to the boundary orientation)
co-oriented contact structure on~$M$.  We call $(W,\omega)$ 
a \emph{stable symplectic
filling} of $(M,\xi)$ if the following conditions are satisfied:
\begin{enumerate}
\item $\omega|_\xi$ is nondegenerate and the induced orientation on~$\xi$
is compatible with its co-orientation
\item $\xi$ admits a nondegenerate contact form~$\lambda$ such that the
Reeb vector field $X_\lambda$ generates the characteristic line 
field on~$\p W$
\item $\xi$ admits a complex bundle structure~$J$ which is tamed by\footnote{The compactness results in \cite{SFTcompactness} are stated for compatible $J$, but they hold without change for tamed $J$ as well.}
both $d\lambda|_\xi$ and $\omega|_\xi$
\end{enumerate}
\end{defn}

Note that a strong filling with Liouville vector field~$Y$ is also a stable 
filling whenever the contact form $\iota_Y\omega|_M$ is nondegenerate, which
can always be assumed after a small perturbation.  In general, the
boundary of a stable filling is a \emph{stable hypersurface} as defined
in \cite{HoferZehnder}, meaning it belongs to a $1$-parameter family of
hypersurfaces in $(W,\omega)$ whose Hamiltonian dynamics are all conjugate.
In particular, the pair $(\lambda,\omega|_M)$ defines a
\emph{stable Hamiltonian structure} on~$M$ 
(cf.~\cite{CieliebakVolkov}).
\begin{thm}
\label{thm:stable}
If $(M,\xi)$ is a closed contact manifold with $\Omega$-twisted 
algebraic torsion for some closed $2$-form~$\Omega$ on~$M$, then
it does not admit any stable filling $(W,\omega)$ for which
$\omega|_{M}$ is cohomologous to~$\Omega$.  In particular, if
$(M,\xi)$ has fully twisted algebraic torsion, then it is not
stably fillable.
\end{thm}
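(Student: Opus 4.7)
The plan is to extend the proof of Theorem~\ref{thm:obstructions} from exact fillings to stable fillings by tracking symplectic areas with $\Omega$-twisted SFT coefficients.

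First I would complete $(W,\omega)$ by attaching the positive cylindrical end $[0,\infty) \x M$ using the stable Hamiltonian structure $(\lambda,\omega|_M)$ provided by Definition~\ref{defn:stableFilling}. Clause~(3) of that definition guarantees the existence of an almost complex structure $\hat J$ on the completion $\hat W$ which is $\RR$-invariant and of standard SFT type on the cylindrical end, tamed by $d\lambda|_\xi$ there, and tamed by the symplectic form on the compact part. The SFT compactness theorem of~\cite{SFTcompactness}---valid in this generality for stable Hamiltonian structures---then provides well-behaved compactifications of the moduli spaces of finite-energy $\hat J$-holomorphic curves in $\hat W$ with only positive punctures asymptotic to closed Reeb orbits of $\lambda$.

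Next I would construct a cobordism augmentation from the perturbed count of rigid such curves. For a curve $C \subset \hat W$ with positive ends at orbits $\gamma_1,\ldots,\gamma_m$, the symplectic area $\int_C \omega$ is well-defined modulo the choice of capping surfaces for the $\gamma_i$ in $M$. Since $[\omega|_M] = [\Omega]$ by hypothesis, this ambiguity lies precisely in $\ker\Omega \subset H_2(M;\RR)$, so weighting curves by their homology classes in $H_2(M;\RR)/\ker\Omega$ gives a consistent assignment. Together with the general SFT formalism of \S\ref{sec:SFT}, this produces an augmentation
$$
\epsilon_W : H^\SFT_*(M,\xi,\Omega) \longrightarrow \RR[[\hbar]]
$$
of the twisted $BV_\infty$-algebra. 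Since $\epsilon_W$ is $\RR[[\hbar]]$-linear on constants, it sends $[\hbar^k]$ to $\hbar^k \in \RR[[\hbar]]$; if $[\hbar^k] = 0$ in $H^\SFT_*(M,\xi,\Omega)$, then $\hbar^k = 0$ in $\RR[[\hbar]]$, a contradiction. The ``in particular'' clause follows by specializing $\Omega$ to a representative of $[\omega|_M]$ for any hypothetical stable filling.

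The main obstacle is verifying that $\epsilon_W$ is a chain-level augmentation in the non-exact setting. This reduces to two points: first, the additivity of $\omega$-area under broken-curve degenerations, which follows from the $\RR$-invariance of $\omega|_M$ along the cylindrical end together with Stokes' theorem (so that cylindrical-end pieces contribute zero area); and second, the coherence of the $H_2(M;\RR)/\ker\Omega$ weighting under gluing, which is ensured by the cohomological hypothesis $[\omega|_M] = [\Omega]$. Beyond these formal consistency checks, the argument relies on the abstract perturbation framework of Hofer--Wysocki--Zehnder, subject to the standard disclaimer of Remark~\ref{remark:disclaimer}.
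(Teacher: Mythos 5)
Your argument is essentially the paper's proof: the paper packages your augmentation $\epsilon_W$ as the morphism of Proposition~\ref{prop:stableFillings}, obtained by attaching the cylindrical end via the collar $((-\epsilon,0]\times M,\,d(t\lambda)+\Omega)$ and using the family $\omega_\varphi$ to control energy, after which $\hbar^k\ne 0$ in the target rules out $[\hbar^k]=0$ in $H_*^\SFT(M,\xi,\Omega)$. One small imprecision worth noting: the paper's target is the Novikov-completed group ring $\overline{R_{\ker\omega}}[[\hbar]]$ rather than $\RR[[\hbar]]$, which both ensures convergence of the curve count without needing to integrate $\omega$ against classes and makes the statement ``$\hbar^k\neq 0$ in the target'' immediate; you would also want to invoke explicitly that $H_*^\SFT(M,\xi,\Omega)$ depends on $\Omega$ only through $\ker\Omega$, so the hypothesis of $\Omega$-twisted torsion transfers to the form $\omega|_M$ actually carried by the filling.
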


Recall that for $\dim M = 3$, $(W,\omega)$ with $\p W = M$ is said to be
a \emph{weak symplectic filling} of $(M,\xi)$ if $\omega|_\xi > 0$.
Thus a stable filling is also a weak filling.  What's far less obvious
is that the converse is true up to deformation: by
\cite{NiederkruegerWendl}*{Theorem~2.8}, every weak filling can be
deformed near its boundary 
to a stable filling of the same contact manifold, hence
weak and stable fillability are completely equivalent notions in
dimension three. Theorem~\ref{thm:stable} thus implies:

\begin{cor}
\label{cor:weak}
Contact $3$-manifolds with fully twisted algebraic torsion are not
weakly fillable.
\end{cor}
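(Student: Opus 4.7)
The plan is to deduce Corollary~\ref{cor:weak} directly from Theorem~\ref{thm:stable} by invoking the equivalence between weak and stable fillability in dimension three. Suppose for contradiction that $(M,\xi)$ is a closed contact $3$-manifold with fully twisted algebraic torsion, yet admits a weak symplectic filling $(W,\omega)$. The first step is to replace $(W,\omega)$ by a stable filling of the same contact manifold: apply \cite{NiederkruegerWendl}*{Theorem~2.8} to deform $\omega$ inside a collar neighborhood of $\p W$, producing a new symplectic form $\omega'$ on $W$ that agrees with $\omega$ away from the collar and such that $(W,\omega')$ is a stable filling of $(M,\xi)$ in the sense of Definition~\ref{defn:stableFilling}.

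Next, set $\Omega := \omega'|_{M}$, a closed $2$-form on~$M$. By Definition~\ref{defn:twistedTorsion}, the hypothesis of fully twisted algebraic torsion means that $[\hbar^k]=0$ in $H^\SFT_*(M,\xi;\{0\})$ for some $k \ge 0$, which, via the coefficient map induced by the quotient $H_2(M;\RR) \to H_2(M;\RR)/\ker\Omega$, forces $[\hbar^k]=0$ also in $H^\SFT_*(M,\xi,\Omega)$. Hence $(M,\xi)$ has $\Omega$-twisted algebraic torsion. But then Theorem~\ref{thm:stable} rules out the existence of any stable filling whose symplectic form restricts to a representative of $[\Omega] \in H^2_{\mathrm{dR}}(M)$, and $(W,\omega')$ is such a filling by construction. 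This contradiction completes the argument.

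There is no genuinely hard step in this proof: both ingredients, Theorem~\ref{thm:stable} and the Niederkrüger--Wendl deformation result, are being used as black boxes. The only thing to be careful about is the compatibility of coefficients, namely that fully twisted torsion really does map forward to $\Omega$-twisted torsion for every closed $2$-form~$\Omega$ on~$M$, which is immediate from the functoriality of SFT in the choice of coefficient ring and is in fact recorded as part of Definition~\ref{defn:twistedTorsion}.
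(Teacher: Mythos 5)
Your proof is correct and follows exactly the route the paper takes: apply the Niederkr\"uger--Wendl deformation result to upgrade a weak filling to a stable one, observe that fully twisted algebraic torsion specializes to $\Omega$-twisted torsion for $\Omega = \omega'|_M$, and invoke Theorem~\ref{thm:stable}. The paper presents this reasoning compactly in the paragraph preceding the corollary; you have simply expanded it to a full argument by contradiction.
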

Figure~\ref{fig:torsionDomains} in \S~\ref{sec:planar} below
shows some examples to which this result applies, including one that
has no Giroux torsion; see also Theorem~\ref{thm:torsion} below, and
\cite{NiederkruegerWendl}.

In higher dimensions, it is not hard to find examples
of stable fillings for which the symplectic form is not exact near
the boundary, though it's less obvious whether there are also
examples which are not strongly fillable.  Such
examples are found in the work in progress by Massot, Niederkr\"uger and
the second author \cite{MassotNiederkruegerWendl}, which defines a suitable
generalization of weak fillings to arbitrary dimensions: in a nutshell, 
$(W,\omega)$ with $\p W = M$ is a weak filling of $(M,\xi)$ if~$\omega$
tames an almost complex structure~$J$ that preserves~$\xi$ and is also
tamed by the natural conformal symplectic structure on~$\xi$.  Under this
definition, one can use an existence result of Cieliebak-Volkov 
\cite{CieliebakVolkov} to show that weak and stable fillability are equivalent,
see \cite{MassotNiederkruegerWendl} for details.
Thus SFT also gives obstructions to weak filling in all
dimensions, where the distinction between ``strong'' and ``weak'' is
detected algebraically via the choice of coefficients.

As already mentioned, the second author \cite{Wendl:openbook2}
recently introduced
a new class of filling obstructions in dimension three called \emph{planar
torsion}, which also has a nonnegative integer-valued \emph{order}.
A contact $3$-manifold is then overtwisted if and only if it has planar 
$0$-torsion, and Giroux torsion implies planar $1$-torsion.
We will recall the definition of planar torsion and $\Omega$-separating 
planar torsion in \S\ref{sec:planar}, and prove the following generalization 
of Theorem~\ref{thm:Giroux}. 

\begin{thm}
\label{thm:torsion}
Suppose $(M,\xi)$ is a closed contact $3$-manifold, $\Omega$ is
a closed $2$-form on~$M$ and $k \ge 0$ is an integer.
\begin{enumerate}
\item If $(M,\xi)$ has planar $k$-torsion then it also has algebraic 
$k$-torsion.
\item If $(M,\xi)$ has $\Omega$-separating planar $k$-torsion then it
also has $\Omega$-twisted algebraic $k$-torsion.
\end{enumerate}
\end{thm}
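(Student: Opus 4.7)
The plan is to realize $[\hbar^k]=0$ in $H_*^\SFT(M,\xi)$ (resp.\ in $H_*^\SFT(M,\xi,\Omega)$) by producing an explicit element $\eta$ of the SFT algebra with $\Dsft(\eta) = \hbar^k$, using the holomorphic curves that arise from the planar pages of the blown-up summed open book supported on the planar torsion domain inside $(M,\xi)$. The key geometric input from~\cite{Wendl:openbook2} is that the planar torsion structure provides exactly the combinatorial and analytic data---planar pages, controlled binding and interface Reeb orbits, and well-behaved moduli spaces of holomorphic curves foliating the symplectization of the domain---needed to compute the relevant piece of $\Dsft$. The order~$k$ of the torsion will reappear as the arithmetic genus of the closed configuration one obtains by gluing together the rigid SFT buildings counted by this computation, and such an arithmetic genus $k$ configuration contributes with a factor of $\hbar^k$ in the SFT formalism.

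Concretely, I would first choose a contact form $\alpha$ for $\xi$ adapted to the planar torsion structure, so that the Reeb vector field inside the domain is tangent to the pages of the open book away from the binding and the interface tori, where it degenerates into Morse--Bott families of orbits. A standard Morse-type perturbation along these submanifolds would then produce a nondegenerate contact form. Next, I would choose an $\RR$-invariant, $d\alpha$-compatible almost complex structure~$J$ on $\RR\x M$ for which the lifts of the planar pages to the symplectization are $J$-holomorphic. The moduli spaces of finite energy $J$-holomorphic curves asymptotic to the resulting orbits have essentially been analyzed in~\cite{Wendl:openbook2}: they are transversely cut out (after an abstract perturbation if needed), their SFT compactifications are under control, and a distinguished subfamily is literally foliated by the lifts of the pages. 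Extracting an SFT equation of the form $\Dsft(\eta) = \hbar^k$, with $\eta$ a suitable polynomial in generators associated to binding orbits, then reduces to counting the rigid buildings in the compactification and checking, through the combinatorics of the open book, that their total arithmetic genus equals~$k$.

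For the twisted statement, I would keep track of the relative homology class of every holomorphic curve entering the count. The $\Omega$-separating hypothesis ensures that each page, and hence each rigid building contributing to $\Dsft(\eta)$, represents a class in $\ker\Omega \subset H_2(M;\RR)$, so the same identity lifts to coefficients in $\RR[H_2(M;\RR)/\ker\Omega]$ and yields $\Omega$-twisted algebraic $k$-torsion. The main obstacle is the holomorphic curve analysis itself: one must verify that the SFT compactification of the relevant moduli spaces consists \emph{exactly} of the rigid buildings responsible for the $\hbar^k$ term, with no extraneous curves that could cancel this contribution or introduce unintended lower-order terms. The planarity of the pages is what makes this tractable, as it permits intersection-theoretic and positivity arguments to rule out unexpected holomorphic curves in the symplectization and to pin down the full structure of the moduli space.
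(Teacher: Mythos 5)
Your overall strategy matches the paper's: exhibit a primitive $F$ in the $q$-variables with $\Dsft F$ producing $\hbar^k$ (up to an invertible factor), using the holomorphic pages of the planar piece after a Morse--Bott perturbation of the Giroux form. Two of the specifics you propose, however, do not hold up.

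First, the power of $\hbar$ does not come from an arithmetic genus. In the SFT convention $\Dsft = \sum_{k\geq 1} D_k \hbar^{k-1}$, a curve of genus $g$ with $s_+$ positive ends contributes at order $\hbar^{s_+ + g - 1}$. The page curve in the planar piece has genus~$0$ and exactly $k+1 = m + n + 2r$ positive ends (one at each binding component, one at each boundary torus of $M_0^P$, and two at each interior interface torus), so the exponent $k$ is produced by the positive punctures, not by genus. In particular the primitive $F$ cannot be a polynomial in binding-orbit generators alone: it must include $q$-variables for the nondegenerate elliptic and hyperbolic orbits created on the interface and boundary tori by the Morse--Bott perturbation.

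Second, and more seriously, you have misidentified the role of the $\Omega$-separating hypothesis. It is not needed -- and generally false -- that the pages represent classes in $\ker\Omega$. The page class~$d$ only shows up as an invertible prefactor $z^{\bar d}$ in front of $\hbar^{k}$, which is harmless. What the hypothesis really controls are the \emph{other} terms in $\Dsft F$ that your proposal overlooks. After the Morse--Bott perturbation, each interface or boundary torus $T_i$ of the planar piece carries a hyperbolic orbit $\gamma_i^h$ and an elliptic orbit $\gamma_i^e$, joined by two gradient-flow cylinders whose homology classes differ by $[T_i]$. These contribute a term of the form $(z^{\overline{[T_i]}} - 1)\, q_{\gamma_i^h}\, \p/\p q_{\gamma_i^e}$ to $\Dsft$. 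When $\Dsft$ is applied to $F$, these terms appear alongside $z^{\bar d}\hbar^k$ and must vanish for the primitive to do its job; and they vanish precisely because the $\Omega$-separating hypothesis puts $[T_i]$ into $\ker\Omega$. So the hypothesis is not bookkeeping on the page class -- it is the mechanism that kills the extra gradient-cylinder contributions. Without it you get $\Dsft F = z^{\bar d}\hbar^k + (\text{extra terms})$ and the argument stalls.

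The remaining point you flag -- ruling out extraneous buildings in the compactified moduli space -- is handled exactly as you suggest, via the uniqueness statement from the analysis of planar summed open books together with a check that all possible cascades are accounted for by the holomorphic pages, so that the Fredholm regular configurations survive any sufficiently small abstract perturbation.
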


\begin{remark}
Together with Theorem~\ref{thm:obstructions} and 
Corollary~\ref{cor:weak}, this yields new proofs that contact $3$-manifolds
with planar torsion are not strongly fillable, and also not weakly
fillable if the planar torsion is fully separating.  These two results were
first proved in \cite{Wendl:openbook2} and \cite{NiederkruegerWendl}
respectively.  The former also proves a vanishing result for the ECH
contact invariant which is closely analogous to Theorem~\ref{thm:torsion}
and has thus far been inaccessible from the direction of Heegaard Floer
homology.  Our argument in fact implies a refinement of this vanishing
result in terms of the relative filtration on ECH introduced
in the appendix; see Theorem~\ref{thm:ECH} below.
\end{remark}

We can now state a more geometric analogue of
Corollary~\ref{cor:noExact}.  The notion of planar torsion gives
rise to a contact invariant $\PT(M,\xi) \in \NN \cup \{0,\infty\}$,
the \emph{minimal order} of planar torsion, defined by
$$
\PT(M,\xi) := \sup \left\{ k \ge 0 \ \big|\ 
\text{$(M,\xi)$ has no planar $\ell$-torsion for any $\ell < k$} \right\}.
$$
This number is infinite whenever $(M,\xi)$ is strongly fillable, and is
positive if and only if $(M,\xi)$ is tight.
Recall that contact connected sums and $(-1)$-surgeries always yield
Stein cobordisms between contact $3$-manifolds 
(see e.g.~\cite{Geiges:book}).
The following can then be thought of as demonstrating 
a higher order
variant of the well known conjecture that such surgeries always
\emph{preserve tightness}.

\begin{cor}
\label{cor:noExactGeometric}
For any $g \ge k \ge 1$, $\PT(V_g,\xi_k) = k-1$.  Moreover, suppose
$(M,\xi)$ is any contact $3$-manifold that 
can be obtained from $(V_g,\xi_k)$ by a sequence of
\begin{itemize}
\item contact connected sums with itself or
exactly fillable contact manifolds, and/or
\item contact $(-1)$-surgeries.  
\end{itemize}
Then $\PT(M,\xi) \ge k-1$.
\end{cor}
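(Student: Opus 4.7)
The plan is to deduce the corollary by combining Theorem~\ref{thm:higherOrder} with Theorems~\ref{thm:obstructions} and~\ref{thm:torsion}, viewing contact $(-1)$-surgeries and contact connected sums as Weinstein handle attachments. A preliminary algebraic observation is the monotonicity of algebraic torsion: because $\hbar$ is central, even and $\Dsft$-closed, any relation $\hbar^\ell = \Dsft(x)$ immediately gives $\hbar^m = \Dsft(\hbar^{m-\ell} x)$ for every $m \ge \ell$. Hence algebraic $\ell$-torsion implies algebraic $m$-torsion for all $m \ge \ell$, and together with Theorem~\ref{thm:higherOrder} this shows $(V_g,\xi_k)$ has no algebraic $\ell$-torsion whenever $\ell \le k-2$.

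First I would establish the equality $\PT(V_g,\xi_k) = k-1$. The upper bound is built into the construction: the piece $S^1 \times \Surface_-$, with $\Surface_-$ a $k$-holed sphere, is a planar $(k-1)$-torsion domain in the sense to be recalled in Section~\ref{sec:planar}. For the lower bound, if $(V_g,\xi_k)$ had planar $\ell$-torsion for some $\ell < k-1$, Theorem~\ref{thm:torsion} would yield algebraic $\ell$-torsion, and hence algebraic $(k-2)$-torsion by monotonicity, contradicting Theorem~\ref{thm:higherOrder}.

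For the second assertion, I would build an exact symplectic cobordism having $(V_g,\xi_k)$ as negative end and $(M,\xi)$ as positive end, by composing elementary Weinstein cobordisms: a contact $(-1)$-surgery along a Legendrian knot is a Weinstein $2$-handle attachment, while a contact connected sum is a Weinstein $1$-handle attachment. A connected sum with an exactly fillable summand is turned into a cobordism with only the desired negative end by capping off that summand with its exact filling along the bottom; the self connected sum is handled the same way, since it amounts to a connected sum with $(S^1 \times S^2,\xi_{\mathrm{std}})$, which is exactly fillable by $S^1 \times B^3$ with its standard Liouville structure. Composing the elementary pieces gives the desired exact cobordism. If now $(M,\xi)$ admitted planar $\ell$-torsion for some $\ell \le k-2$, Theorem~\ref{thm:torsion} would give algebraic $\ell$-torsion at the positive end, Theorem~\ref{thm:obstructions} would transport it to $(V_g,\xi_k)$ at the negative end, and this would contradict the first assertion. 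Hence $\PT(M,\xi) \ge k-1$. The main subtlety lies not in the algebraic deductions, which are immediate once the preceding theorems are in place, but in verifying that each allowed operation is realized by a genuinely \emph{exact} cobordism (as opposed to merely a strong symplectic one); this is standard Weinstein handlebody theory and requires no new SFT input.
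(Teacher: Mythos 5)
Your argument is correct and is essentially the intended one: the paper leaves this corollary unproved but prefaces it with the remark that contact connected sums and $(-1)$-surgeries yield Stein (hence exact) cobordisms, so the deduction is exactly the combination of Theorem~\ref{thm:higherOrder} (via the monotonicity of algebraic torsion) with Theorems~\ref{thm:torsion} and~\ref{thm:obstructions} that you carry out, plus capping exactly fillable summands. The only cosmetic difference is your reduction of the self connected sum to a connected sum with $(S^1\times S^2,\xi_0)$ followed by capping with $S^1\times B^3$; one can equally well observe directly that attaching a single Weinstein $1$-handle at two points of the same connected boundary already yields an exact cobordism with connected negative end.
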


At present, we do not know any example for which the minimal
order of algebraic torsion is strictly smaller than the minimal order
of planar torsion, but Theorem~\ref{thm:noGiroux}
seems to suggest that such examples are likely to exist.

Here is a summary of the remainder of the
paper.  
In \S\ref{sec:SFT} we review the algebraic formalism of SFT as a
$BV_\infty$-algebra, in particular proving Theorems~\ref{thm:obstructions}
and~\ref{thm:stable}.
In \S\ref{sec:planar} we review the definition of planar torsion and
prove Theorem~\ref{thm:torsion}, as an easy application of some results
on holomorphic curves from \cite{Wendl:openbook2}.  The $S^1$-invariant
examples $(S^1 \times \Surface, \xi_\Gamma)$ are then treated at length in
\S\ref{sec:higherOrder}, leading to the proofs of 
Theorems~\ref{thm:noGiroux} and~\ref{thm:higherOrder}. 
We close with a brief
discussion of open questions and related issues in \S\ref{sec:open}.

\smallskip

In Michael Hutchings's appendix to this paper, it is shown that
the applications to $3$-dimensional contact topology described above
can also be proved using methods from Embedded Contact Homology.
Indeed, as remarked above, all of our examples of contact $3$-manifolds
with algebraic torsion can also be shown to have vanishing ECH
contact invariant, suggesting that a refinement of the latter should exist
which could detect the order of torsion. 
The appendix carries out enough of this program to suffice for 
our applications.  In particular, Hutchings associates to any
closed contact $3$-manifold $(M,\xi)$ with generic contact form~$\lambda$,
compatible complex structure~$J$ and positive number 
$T \in (0,\infty]$, two nonnegative (possibly infinite) integers
$f^T(M,\lambda,J)$ and $f^T_{\text{simp}}(M,\lambda,J)$.  These can be finite
only if the ECH contact invariant vanishes, and they
have the property that
$$
f_{\text{simp}}^{T_+}(M^+,\lambda^+,J^+) \ge f^{T_-}(M^-,\lambda^-,J^-)
$$
whenever there is an exact cobordism $(X,d\lambda)$ with
$\lambda = e^s\lambda^\pm$ at the positive/negative end and
$T_- \ge T_+$ (cf.~Theorem~\ref{thm:fm}).
Since $f^T$ and $f_{\text{simp}}^T$ are defined by counting embedded
holomorphic curves in symplectizations, our SFT computations can be
reinterpreted as estimates of these integers,
leading to the following:

\begin{thm}
\label{thm:ECH}
\ \\

\vspace{-12pt}
\begin{enumerate}
\item If $(M,\xi)$ has planar $k$-torsion, then~$\xi$ admits a nondegenerate
contact form~$\lambda$ and generic complex structure~$J$ such that
$f_{\text{simp}}^\infty(M,\lambda,J) \le k$.
\item For any $g \ge k \ge 1$, $(V_g,\xi_k)$ admits a sequence of
generic contact forms and complex structures $(\lambda_i,J_i)$ such that:
\begin{enumerate}
\item $f^{T_i}(V_g,\lambda_i,J_i) \ge k - 1$ for some sequence of real
numbers $T_i \to +\infty$,
\item For $i < j$, there is an exact symplectic cobordism $(X,d\lambda)$
such that $\lambda$ matches $e^s\lambda_i$ at the positive end and
$e^s\lambda_j$ at the negative end.
\end{enumerate}
\end{enumerate}
\end{thm}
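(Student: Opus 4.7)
The plan is to reinterpret the SFT computations proving Theorems \ref{thm:torsion} and \ref{thm:higherOrder} within the ECH framework of the appendix, reusing the explicit holomorphic curves that witness those SFT relations. For part~(1), begin with the model contact form $\lambda$ and compatible complex structure $J$ on $(M,\xi)$ supplied by the planar $k$-torsion structure from \cite{Wendl:openbook2}. The proof of Theorem \ref{thm:torsion} in \S\ref{sec:planar} establishes $[\hbar^k]=0$ in $H^\SFT_*(M,\xi)$ by exhibiting an explicit family of embedded, index-zero, genus-zero holomorphic curves supported in the planar torsion domain, namely the pages of an auxiliary planar open book together with trivial cylinders over Reeb orbits. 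These are precisely the curves entering Hutchings's definition of $f_{\text{simp}}^\infty$, and their number is controlled by the order~$k$. The task is to verify that these curves meet the embeddedness, asymptotic, and ECH-index conditions built into $f_{\text{simp}}^\infty$, which follows from the strong regularity and transversality properties of the foliations established in \cite{Wendl:openbook2}.

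For part~(2)(a), take the contact forms $\lambda_i$ and almost complex structures $J_i$ built in \S\ref{sec:higherOrder}, tuned so that all Reeb orbits relevant to the algebraic $(k-1)$-torsion witness have action below a threshold $T_i$ with $T_i\to\infty$. The lower bound $f^{T_i}(V_g,\lambda_i,J_i)\ge k-1$ is argued by contraposition: a configuration realizing $f^{T_i}\le k-2$ would, once lifted from ECH to SFT via the correspondence between embedded ECH curves and products of orbit variables weighted by $\hbar$, produce a chain-level expression of $\hbar^{k-2}$ as a $\Dsft$-boundary, contradicting the non-vanishing of $[\hbar^{k-2}]$ established in \S\ref{sec:higherOrder}. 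The main technical obstacle is making this ECH-to-SFT lift precise: an embedded ECH curve generally corresponds to a disjoint union of simple SFT curves with multiplicities, and one must track the $\hbar$-degree contributed by each multiply covered component to confirm that the resulting SFT relation has the claimed algebraic depth.

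For part~(2)(b), the contact forms $\lambda_i$ all define the same contact structure $\xi_k$ on $V_g$, and they can be arranged in the construction of \S\ref{sec:higherOrder} so that for $i<j$ one has $\lambda_i=h_{ij}\lambda_j$ with $h_{ij}$ a smooth positive function whose size is controlled by a suitable rescaling of the parameters. Building the cobordism is then a standard Liouville interpolation: on $X=[s_-,s_+]\times V_g$ take $\lambda$ of the form $g(s,\cdot)\lambda_j$, with $g$ agreeing with $e^{s-s_-}$ near $s_-$ and with $e^{s-s_+}h_{ij}$ near $s_+$, chosen to keep $d\lambda$ symplectic throughout. This yields the required exact cobordism from $\lambda_j$ at the negative end to $\lambda_i$ at the positive end, consistent with Corollary~\ref{cor:noExact} since the underlying contact structure is unchanged.
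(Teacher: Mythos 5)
Your part~(1) is the same route the paper takes, up to minor slips: the relevant curve from the proof of Theorem~\ref{thm:torsion} has Fredholm index~$1$ after the Morse--Bott perturbation (not index~$0$), and after perturbation it is a single embedded punctured sphere rather than a configuration of pages plus trivial cylinders. The real content, which you and the paper both leave implicit, is to check via formula~\eqref{eqn:JI} that the curve has $J_+(u)=2k_0$ with $k_0\le k$, and to verify that its positive asymptotic orbit set is simple (which is delicate when the planar piece has interior interface tori $r>0$, since those orbits appear with multiplicity $2$). Your part~(2)(b) also matches the paper in spirit, although your interpolation $g(s,\cdot)\lambda_j$ is not guaranteed to have $d\lambda$ symplectic just because $g>0$; the paper sidesteps this by using the explicit family $\lambda_s$ and verifying the symplectic condition directly in Proposition~\ref{prop:contactForm}(iii), after which the cobordism $X_\epsilon$ is obtained by perturbing the resulting domain.

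Your argument for part~(2)(a) is the genuine gap, and it goes by a route the paper explicitly does not use. You propose to argue by contraposition: if $f^{T_i}\le k-2$, then some ECH chain-level relation would ``lift'' to an SFT relation exhibiting $\hbar^{k-2}$ as a $\Dsft$-boundary, contradicting Theorem~\ref{thm:higherOrder}. No such lift is available. ECH and SFT count different curves (ECH counts $I=1$ embedded curves possibly together with trivial-cylinder components and handles multiplicities through the admissibility rule; SFT counts all Fredholm index~$1$ curves after abstract perturbation, with combinatorial weights and $\hbar$ tracking genus plus positive punctures), and the $J_+$-filtration is only analogous to, not identical with, the $\hbar$-filtration. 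The paper's own discussion in \S\ref{sec:open} emphasizes that a precise relation between $f$, $f_{\operatorname{simp}}$ and the order of algebraic torsion is not known. The paper instead proves the lower bound $f^{T_\epsilon}\ge k-1$ directly, with no reference to SFT at all: Lemma~\ref{lem:nocurves} shows that the algebraic count of index-$1$ genus-$g$ $J_\epsilon'$-holomorphic curves with no negative ends and $r$ positive ends is zero whenever $g+r\le k-1$ and the total action is below $T_\epsilon$, and Proposition~\ref{prop:ECHlowerBound} converts exactly this vanishing of counts (with Proposition~\ref{prop:J}(a) to translate between $(g,N_+)$ and $J_+$) into the bound $f^{T_\epsilon}(V_g,\lambda_\epsilon',J_\epsilon')\ge k-1$. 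You should use Lemma~\ref{lem:nocurves} together with Proposition~\ref{prop:ECHlowerBound} rather than attempting an ECH-to-SFT correspondence.
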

As mentioned in Remark~\ref{remark:disclaimer} above, 
this immediately implies an alternative proof of 
Corollaries~\ref{cor:noExact} and~\ref{cor:noExactGeometric},
cf.~Corollary~\ref{cor:ECHobstructions} in the appendix.

\subsection*{Acknowledgments}
The authors would like to thank Kai Cieliebak, Helmut Hofer, 
Patrick Massot, Klaus Niederkr\"uger and Jeremy Van Horn-Morris for
helpful conversations.  
The inclusion of Michael Hutchings's appendix came about due to
discussions between Hutchings and the second author at the MSRI Workshop
\textsl{Symplectic and Contact Topology and Dynamics: Puzzles and Horizons}
in March 2010.

CW gratefully acknowledges support from an Alexander von Humboldt Foundation
research fellowship.  This work was started when both authors
held positions at ETH Z\"urich, and significant progress was made
during a joint conference visit to the Lorentz Center in Leiden. It is
a pleasure to thank these institutions for the stimulating working
environment.

\section{Review of SFT as a $BV_\infty$-algebra}
\label{sec:SFT}

The general framework of SFT, in particular its algebraic structure,
was laid out in \cite{SFT} (see also \cite{Eliashberg:SFT} for a more
recent point of view), whereas the analytic foundations are the subject
of ongoing work by Hofer-Wysocki-Zehnder (see \cite{Hofer:polyfolds}).
In this section, we will take the existence of SFT as described in
\cite{SFT} for granted and review a version of the theory which is
readily derived from this description
(cf. \cite{CieliebakLatschev:propaganda} for some details of this
translation). To keep the discussion reasonably brief, we will
frequently refer to these sources for
details. Theorems~\ref{thm:obstructions} and~\ref{thm:stable}
will be simple consequences of the algebraic properties of SFT.

\subsection{Review of the basic setup of SFT}
Let $(M,\xi)$ be a closed manifold of dimension $2n-1$ with a
co-oriented contact structure. To describe SFT, one needs to fix a
nondegenerate contact form $\lambda$, as well as some additional
choices, which we denote by a single letter $\ff$ (for framing). The
most important of these are: a cylindrical almost complex structure
$J$ on the symplectization of $M$, coherent orientations for
the moduli space of finite energy 
$J$-holomorphic curves, an abstract perturbation scheme for the $J$-holomorphic
curve equation and suitable spanning surfaces for Reeb orbits. 

Given a linear subspace $\rR \subset H_2(M ; \RR)$, let
$R_\rR:=\RR[H_2(M;\RR) / \rR]$ denote the group ring over $\RR$ of
$H_2(M;\RR) / \rR$, whose elements we write as $\sum a_i z^{d_i}$
with $a_i \in \RR$ and $d_i \in H_2(M ; \RR) / \rR$. 
Define $\aA= \aA(\lambda)$ to be the $\ZZ_2$-graded algebra with unit
over the group ring $R_\rR$, generated by variables
$q_\gamma$, where $\gamma$ ranges over the collection of good closed
Reeb orbits for $\lambda$ (cf. \cite[footnote on p.~566 and Remarks
1.9.2 and 1.9.6]{SFT}), and the degree of $q_\gamma$ is defined as
$$
|q_\gamma| := n-3 + \muCZ(\gamma) \mod 2.
$$
Here $\muCZ(\gamma)$ denotes the mod 2 Conley-Zehnder index of the closed
orbit $\gamma$, which is defined in terms of the
linearized Poincare return map for $\gamma$
(cf. \cite[p.~567]{SFT}). We also introduce an extra variable $\hbar$
of even degree and consider the algebra of formal power series
$\aA[[\hbar]]$.

To construct the differential, one chooses a cylindrical almost
complex structure $J$ on the symplectization $(\RR \x M,
\omega=d(e^s\lambda))$.  To be precise, we say that an 
almost complex structure~$J$ on $\RR \times M$ is \emph{adapted
to~$\lambda$} if it is $\RR$-invariant, maps the unit vector $\p_s$
in the $\RR$-direction to the Reeb vector field $X_\lambda$ of~$\lambda$,
and restricts to a tamed complex structure on the symplectic
vector bundle $(\xi,d\lambda)$.  After a choice of spanning surfaces as
in \cite[p.~566, see also p.~651]{SFT}, the projection to $M$ of each
finite energy holomorphic curve~$u$ can be capped off to a
$2$-cycle in $M$, 
and so it gives rise to a homology class in~$H_2(M)$, 
which we project to define $[u] \in H_2(M ; \RR) / \rR$.

As explained in \cite[section 6]{CieliebakLatschev:propaganda}, the
count of suitably perturbed $J$-holomorphic curves in $\RR \x M$ with
finite Hofer energy gives rise to a differential operator
$$
\Dsft: \aA[[\hbar]] \to \aA[[\hbar]]
$$
such that
\begin{itemize}
\item $\Dsft$ is odd and squares to zero,
\item $\Dsft(1)=0$, and
\item $\Dsft = \sum_{k \geq 1} D_k \hbar^{k-1}$, where $D_k:\aA \to
  \aA$ is a differential operator of order $\leq k$.
\end{itemize}
More precisely, 
$$
D_k= \sum_{\stackrel{\Gamma_+,\Gamma_-,g,d}{|\Gamma_+| + g = k}}
n_g(\Gamma_-, \Gamma_+,d)
\frac{1}{C(\Gamma_-,\Gamma_+)} q_{\gamma_1^-}\cdots q_{\gamma_{s_-}^-} z^d \frac
\p {\p q_{\gamma_1^+}} \cdots \frac \p {\p q_{\gamma_{s_+}^+}},
$$
where the sum ranges over all nonnegative integers $g \ge 0$, homology classes
$d \in H_2(M;\RR) / \rR$ and ordered (possibly empty) 
collections of good closed Reeb orbits
$\Gamma_\pm = (\gamma^\pm_1,\ldots,\gamma^\pm_{s_\pm})$ such that
$s_+ + g = k$.  The number
$n_g(\Gamma_-, \Gamma_+,d)\in \QQ$ denotes the count of (suitably perturbed)
holomorphic curves of genus $g$ with positive asymptotics $\Gamma_+$ 
and negative asymptotics $\Gamma_-$ in the homology class~$d$, including
asymptotic markers as explained in \cite[p.~622f]{SFT}.  Finally, 
$C(\Gamma_-,\Gamma_+) \in \NN$ is a combinatorial factor defined as
$$
C(\Gamma_-,\Gamma_+) =
s_-!s_+!\kappa_{\gamma_1^-}\cdots
  \kappa_{\gamma_{s_-}^-} \kappa_{\gamma_1^+}\cdots
  \kappa_{\gamma_{s_+}^+},
$$
where $\kappa_\gamma$ denotes the covering multiplicity of the Reeb
orbit $\gamma$. 

Observe in particular that for $Q=q_{\gamma_1}\cdots q_{\gamma_r}$, 
the constant coefficient
(i.e. the element of the ground ring) in $D_k(Q)$ for $k \geq r$
corresponds to the count of holomorphic curves of genus $k-r$ with
positive asymptotics $\Gamma=\{\gamma_1, \cdots,\gamma_r\}$ and no
negative ends.   

The homology of $(\aA[[\hbar]],\Dsft)$ is denoted by
$H_*^\SFT(M, \lambda,\ff ; \rR)$.
Note that by definition the operator
$\Dsft$ commutes with $\hbar$ and with elements of~$R_\rR$. 
As $\Dsft$ is not a derivation, the
homology is not an algebra, but only an $R_\rR[[\hbar]]$-module. However,
the element $1 \in \aA$ and all its $R_\rR[[\hbar]]$-multiples
are always closed by the second property above, and so they define
preferred homology classes.  The special case $\rR = H_2(M;\RR)$
is of particular importance: then $R_\rR$ reduces to the trivial group
ring~$\RR$ and we abbreviate
$$
H_*^\SFT(M, \lambda,\ff) := H_*^\SFT(M, \lambda,\ff ; H_2(M;\RR)),
$$
which we refer to as the SFT with \emph{untwisted} coefficients.
Similarly, for any closed $2$-form $\Omega$ on~$M$, we abbreviate
the special case $\rR = \ker\Omega \subset H_2(M;\RR)$ by
$$
H_*^\SFT(M, \lambda,\ff,\Omega) := H_*^\SFT(M, \lambda,\ff ; \ker\Omega)
$$
and call this the SFT with \emph{$\Omega$-twisted} coefficients.
The \emph{fully} twisted SFT is 
$$
H_*^\SFT(M, \lambda,\ff ; \{0\}),
$$
defined by taking~$\rR$ to
be the trivial subspace.  Observe that the inclusions
$\{0\} \hookrightarrow \ker\Omega \hookrightarrow H_2(M;\RR)$ induce natural
$\RR[[\hbar]]$-module morphisms
$$
H_*^\SFT(M, \lambda,\ff ; \{0\}) \to
H_*^\SFT(M, \lambda,\ff,\Omega) \to
H_*^\SFT(M, \lambda,\ff).
$$

A framed cobordism $(X,\omega,\ff_X)$ with positive end
$(M^+,\lambda^+,\ff^+)$ and negative end $(M^-,\lambda^-,\ff^-)$ 
is a symplectic cobordism $(X,\omega)$ with oriented boundary
$M^+ \sqcup (-M^-)$, together with the following additional data: 
\begin{itemize}
\item a Liouville vector field $Y$, defined near the boundary,
  pointing outward at $M^+$ and inward at $M^-$, such that
  $\iota_Y\omega|_{M^\pm}=\lambda^\pm$, 
\item a tamed almost complex structure $J$ interpolating between
  the given cylindrical structures $J^\pm$ at the ends,
\item coherent orientations for the moduli spaces of finite energy
$J$-holomorphic curves in the completion of~$X$,
\item an abstract perturbation scheme compatible with~$\ff^+$
and~$\ff^-$, and
\item spanning surfaces for the cobordism as described in
    \cite[p.~571f]{SFT}.
\end{itemize}
As explained in \cite[section 8]{CieliebakLatschev:propaganda}, such a
cobordism gives rise to a morphism from 
$H_*^\SFT(M^+,\lambda^+,\ff^+)$ to $H_*^\SFT(M^-,\lambda^-,\ff^-)$
after suitably twisting the differential as follows.  

Suppose $\rR^\pm \subset H_2(M^\pm;\RR)$ and
$\rR(X) \subset \ker\omega \subset H_2(X;\RR)$ are linear subspaces
such that the maps $H_2(M^\pm;\RR) \to H_2(X;\RR)$ induced by the
inclusions $M^\pm \hookrightarrow X$ map $\rR^\pm$ into~$\rR(X)$.
Define the group rings $R_{\rR^\pm} = \RR[H_2(M;\RR) / \rR^\pm]$ and
$R_{\rR(X)}=\RR[H_2(X;\RR) / \rR(X)]$, and let
$(\aA^\pm[[\hbar]],\Dsft^\pm)$ denote the $BV_\infty$-algebras as
defined above for $(M^\pm,\lambda^\pm,\ff^\pm)$ with coefficients in
$R_{\rR^\pm}$.  We also denote by
$\aA^-_X$ the algebra generated  by the $q_\gamma^-$
with coefficients in $R_{\rR(X)}$ instead of $R_{\rR^-}$, 
Novikov completed as described in \cite[p.624]{SFT} (note that
integration of $\omega$ gives a well defined homomorphism
$H_2(X;\RR)/\rR(X) \to \RR$).
The inclusions $M^\pm \hookrightarrow X$ give rise to 
morphisms $H_2(M^\pm;\RR) / \rR^\pm \to H_2(X;\RR) / \rR(X)$ and
$R_{\rR^\pm} \to R_{\rR(X)}$, which in particular
determine a morphism of algebras $\aA^- \to \aA_X^-$.

Now $(X,\omega,\ff_X)$ gives rise to several structures, the first of
which is an element $A\in \hbar^{-1}\aA^-_X[[\hbar]]$ satisfying
$\Dsft^-(e^A)=0$, which is obtained from counting holomorphic
curves in $X$ with no positive punctures (these may exist only if $X$
is not exact). Using this, one can define a twisted differential
$\mathbf{D}^-_X: \aA^-_X[[\hbar]] \to \aA^-_X[[\hbar]]$ 
by the formula
$$
\mathbf{D}^-_X(Q)= e^{-A}\Dsft^-(e^A\cdot Q).
$$
In this way, we get a twisted version of SFT for
$(M^-,\lambda^-,\ff^-)$, which depends on $(X,\omega,\ff_X)$.  

\begin{remark}
Above we have defined two
kinds of twisted versions of SFT, namely SFT twisted with respect to a closed
two-form, and the twisted SFT of the negative end of a (non-exact)
symplectic cobordism. We hope that it is always clear from the context
which kind of twisting is meant.
\end{remark}

The other structure one obtains is a chain map
$\Phi=e^\phi:(\aA^+[[\hbar]],\Dsft^+) \to
(\aA^-_X[[\hbar]],\mathbf{D}_X^-)$ determined by a map
$\phi=\phi_X:\aA^+ \to \aA^-_X[[\hbar]]$ satisfying
\begin{itemize}
\item $\phi$ is even and $\phi(1)=0$,
\item $e^{\phi}\Dsft^+ = \mathbf{D}^-_X e^{\phi}$, and
\item $\phi= \sum_{k \geq 1} \phi_k \hbar^{k-1}$, where each
  $\phi_k:\aA^+ \to \aA^-_X$ is a differential operator of order
  $\leq k$ over the zero morphism.\footnote{Given a
    morphism $\rho:A_1 \to A_2$ between graded commutative
    algebras, a homogeneous linear map $D:A_1 \to A_2$ is a
    differential operator of order $\leq k$ over $\rho$ if for each
    homogeneous element $a\in A_1$ the map $x \mapsto
    D(ax)-(-1)^{|D||a|}\rho(a)D(x)$ is a differential operator of order
    $\leq k-1$, with the convention that the zero map has order $\leq -1$.}
\end{itemize}

This $\phi$ counts holomorphic curves in $X$ with at least one
positive puncture. The first condition above translates to the fact
that $\Phi(1)=1$. Again $\Phi$ is $\hbar$-linear, so it induces a
morphism of $\RR[[\hbar]]$-modules $H_*(\aA^+,\Dsft^+) \to
H_*(\aA^-_X,\mathbf{D}^-_X)$, which maps the preferred class 
$[1]\in H_*(\aA^+,\Dsft^+)$ and its $R_{M^+}[[\hbar]]$-multiples to the
corresponding classes in $H_*(\aA^-_X,\mathbf{D}^-_X)$.

To discuss the invariance properties of SFT, one studies holomorphic
curves in topologically trivial cobordisms $\RR \times M$. More
precisely, given two contact forms $\lambda^\pm$ for the same contact
structure~$\xi$, there is a constant $c>0$ and an exact symplectic form
$\omega = d(e^s \lambda_{s})$ on $\RR \times M$ such that the primitive
$\lambda_{s}$ agrees with $c \lambda^-$ at the negative end and with
$\lambda^+$ at the positive end of the cobordism. Similarly, one finds
a framing $\ff_{\RR \times M}$ compatible with given framings $\ff^\pm$
at the ends. Note that in this case $\ker\omega = 
H_2(X) = H_2(M)$, so we can choose $\rR^\pm = \rR =\rR(X)$
and observe that the completion process in the definition of
  $\aA^-_X$ is trivial since $\omega$ is exact,  
giving rise to a natural identification of $\aA^-_X$ with $\aA^-$.
Likewise, $A\in \hbar^{-1}\aA^-$ vanishes as the cobordism is
exact. Since rescaling of $\lambda$ does not influence the count of
holomorphic curves, we obtain a chain map $(\aA^+[[\hbar]],\Dsft^+) \to
(\aA^-[[\hbar]],\Dsft^-)$.  

Reversing the roles of $\lambda^+$ and $\lambda^-$, one obtains
a similar chain map in the other direction, 
and a deformation argument implies that
both compositions are chain homotopic to the identity maps on
$(\aA^\pm,\Dsft^\pm)$, respectively.  In particular, they induce
$R_\rR[[\hbar]]$-module isomorphisms on
homology, so that the contact invariant
$$
H_*^\SFT(M, \xi ; \rR) := H_*^\SFT(M, \lambda,\ff ; \rR)
$$
is well defined up to natural isomorphisms.  
It is important for us to observe that,
by construction, these morphisms are the identity on $R_\rR[[\hbar]]
\subset \aA^\pm$, thus $H^\SFT_*(M,\xi ; \rR)$
comes with preferred homology classes
associated to the elements of $R_\rR[[\hbar]]$.  Considering
the special cases where $\rR$ is~$\{0\}$, $\ker\Omega$ or $H_2(M;\RR)$ again
gives rise to the fully twisted, $\Omega$-twisted and untwisted versions
respectively, with natural $\RR[[\hbar]]$-module morphisms
\begin{equation}
\label{eqn:coefficients}
H_*^\SFT(M, \xi ; \{0\}) \to
H_*^\SFT(M, \xi,\Omega) \to
H_*^\SFT(M, \xi).
\end{equation}

\begin{remark}
\label{rem:truncation}
The above discussion of morphisms can be refined slightly as follows.
Given a nondegenerate contact form $\lambda$ and a constant $T>0$, we
can consider the linear subspace $\aA(\lambda, T) \subset \aA(\lambda)$ in the
corresponding chain level algebra generated by all the monomials of the form
$q_{\gamma_1}\dots q_{\gamma_r}$ for which the total action is bounded
by $T$, i.e.
$$
\sum_{j=1}^r \int_{\gamma_j} \lambda < T.
$$
Since the energy of holomorphic curves contributing to $\Dsft$ is
nonnegative and given by the action difference of the asymptotics, the
operator $\Dsft$ restricts to define a differential
$$
\Dsft : \aA(\lambda,T)[[\hbar]] \to \aA(\lambda,T)[[\hbar]].
$$
Moreover, if $\omega = d(e^s \lambda_s)$ is a symplectic form
on $\RR\times M$ such that $\lambda$ agrees with $\lambda^+$ at the
positive end and $c\lambda^-$ at the negative end, then the resulting
morphism respects the truncation with suitable rescaling, i.e.~it gives
rise to a chain map
$$
\Phi_T: (\aA(\lambda^+,T)[[\hbar]],\Dsft^+) \to
(\aA(c\lambda^-,T)[[\hbar]],\Dsft^-) 
= (\aA(\lambda^-,T / c)[[\hbar]],\Dsft^-).
$$
Beware however that, due to the rescaling of forms for the cylindrical
cobordisms, there is no meaningful filtration on 
$H^\SFT_*(M,\xi ; \rR)$.

In the proof of Theorem~\ref{thm:higherOrder} we will use this refinement
in the situation where $\lambda^-$ has only its periodic orbits of
action at most $T$ nondegenerate, in which case the truncated complex
$(\aA(\lambda^-,T)[[\hbar]],\Dsft^-)$ can still be constructed with all the
required properties. 
\end{remark}

It is useful to consider how the chain map
$\Phi : (\aA^+[[\hbar]],\Dsft^+) \to (\aA^-_X[[\hbar]],\mathbf{D}_X^-)$
induced by a symplectic cobordism~$(X,\omega)$ simplifies whenever certain 
natural extra assumptions are placed on~$X$.  First, suppose that
$(X,\omega)$ is an \emph{exact} cobordism.  As we already observed above,
in this case~$X$ contains no holomorphic curves without positive ends,
hence the ``twisting'' term $A\in \hbar^{-1}\aA^-_X[[\hbar]]$ vanishes.
Moreover, since $\ker\omega = H_2(X;\RR)$, we can set $\rR(X) = H_2(X;\RR)$
and reduce $R_{\rR(X)}$ to the untwisted coefficient ring~$\RR$.
Making corresponding choices $\rR^\pm = H_2(M^\pm;\RR)$ so that
$R_{\rR^\pm} = \RR$ for the positive and negative ends, we then have a
natural identification of the two chain complexes
$(\aA^-_X[[\hbar]],\mathbf{D}_X^-)$ and $(\aA^-[[\hbar]],\Dsft^-)$,
hence the aforementioned chain map yields the following:

\begin{prop}
\label{prop:exactCobordisms}
Any exact symplectic cobordism $(X,\omega)$ with positive end
$(M^+,\xi^+)$ and negative end $(M^-,\xi^-)$ gives rise to a natural
$\RR[[\hbar]]$-module morphism on the untwisted SFT,
$$
\Phi_X : H_*^\SFT(M^+,\xi^+) \to H_*^\SFT(M^-,\xi^-).
$$
\end{prop}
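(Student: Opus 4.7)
The plan is to specialize the general morphism construction already sketched in the text, by checking that each piece of the ``twisting'' data trivializes when $(X,\omega)$ is exact, and then to verify that the resulting chain map descends to a well-defined, natural morphism on homology.

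First I would observe that with $(X,\omega) = (X, d\lambda_X)$ exact, one can make the tautological choices $\rR(X) = H_2(X;\RR)$ and $\rR^\pm = H_2(M^\pm;\RR)$, which are compatible with the maps induced by the inclusions $M^\pm \hookrightarrow X$. Under these choices the group rings $R_{\rR^\pm}$ and $R_{\rR(X)}$ all reduce to $\RR$, and the Novikov completion used to define $\aA^-_X$ becomes trivial since $\omega$ integrates to zero on every class in $\ker\omega = H_2(X;\RR)$. Consequently $\aA^-_X$ is canonically identified with $\aA^-$ equipped with untwisted coefficients.

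Next I would show that the auxiliary element $A \in \hbar^{-1}\aA^-_X[[\hbar]]$, defined by the count of holomorphic curves in the completion of $X$ having only negative punctures, vanishes. A curve $u$ contributing to $A$ would have asymptotics only at $(M^-,\lambda^-)$, so by Stokes' theorem applied to the primitive~$\lambda$ extending $\lambda^\pm$ (together with a compatible extension on the cylindrical ends, where the symplectic form is automatically exact), the symplectic area $\int_u \omega$ would equal the negative sum of the periods of the negative asymptotics; but this area is strictly positive by the taming condition on~$J$, a contradiction. Hence $A = 0$ and therefore $\mathbf{D}^-_X = \Dsft^-$, so the chain map $\Phi = e^\phi$ produced by the general formalism becomes a genuine chain map
$$
\Phi : (\aA^+[[\hbar]], \Dsft^+) \longrightarrow (\aA^-[[\hbar]], \Dsft^-),
$$
which is $\hbar$-linear by construction and therefore induces an $\RR[[\hbar]]$-module morphism $\Phi_X$ on homology.

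Finally I would address naturality, i.e.\ independence of the framing~$\ff_X$, the tamed almost complex structure~$J$, the perturbation scheme and the spanning surfaces. Two sets of such choices are connected by a homotopy of framed exact cobordism structures, and a standard SFT parametric moduli argument (as sketched in the excerpt for cylindrical cobordisms) produces a chain homotopy between the two chain maps; the homotopy is again untwisted because along the path all cobordisms remain exact, keeping $A \equiv 0$. Composition of exact cobordisms is handled by stretching the neck and using the neck-stretching compactness of SFT, which shows that the morphism induced by a concatenation is chain homotopic to the composition of the two morphisms, and composition with a half-symplectization on either side is chain homotopic to the identity. I expect the only genuinely subtle point to be the vanishing of~$A$, since everything else follows formally once the untwisted setting is established; the Stokes argument for~$A$ requires that the completion of the exact cobordism admits a globally defined primitive for~$\omega$ that restricts to $e^s\lambda^\pm$ on the cylindrical ends, which is standard but worth making explicit.
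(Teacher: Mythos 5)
Your proposal is correct and follows essentially the same route as the paper: reduce to untwisted coefficients by choosing $\rR(X) = H_2(X;\RR)$ and $\rR^\pm = H_2(M^\pm;\RR)$, observe that exactness kills the twisting term $A$ (the paper cites its earlier observation that exact cobordisms contain no holomorphic curves without positive ends, for which your Stokes/energy argument is the standard justification), and conclude that the general chain map $\Phi = e^\phi$ identifies with a chain map into the untwisted complex $(\aA^-[[\hbar]],\Dsft^-)$. The extra paragraph on naturality does no harm but is already built into the invariance discussion earlier in \S\ref{sec:SFT} and is not re-derived in the paper's one-paragraph argument.
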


Now suppose $(X,\omega)$ is a strong filling of $(M^+,\xi^+)$, which we may
view as a symplectic cobordism whose negative end $(M^-,\xi^-)$ 
is the empty set.
For any given subspace $\rR(X) \subset \ker\omega$,
the Novikov completion $\overline{R_{\rR(X)}}$ of
$R_{\rR(X)}$ need not be trivial, but the
chain complex $(\aA^-_X[[\hbar]],\mathbf{D}_X^-)$ has no generators other
than the unit, and its differential vanishes, hence its homology is
simply~$\overline{R_{\rR(X)}}[[\hbar]]$.  Choosing $\rR \subset
H_2(M;\RR)$ so that the natural map $H_2(M;\RR) \to H_2(X;\RR)$ induced by
the inclusion $M \hookrightarrow X$ takes $\rR$ into $\rR(X)$, we also
obtain a natural $\RR[[\hbar]]$-module morphism 
$R_\rR[[\hbar]] \to R_{\rR(X)}[[\hbar]]$.  Note that since~$\omega$ is
necessarily  exact near $\p X$, we can always choose $\rR(X) =
\ker\omega$ and $\rR = H_2(M;\RR)$.  We obtain:

\begin{prop}
\label{prop:strongFillings}
Suppose $(X,\omega)$ is a strong filling of $(M,\xi)$, and
$\rR(X) \subset \ker\omega \subset H_2(X;\RR)$ and $\rR \subset H_2(M;\RR)$
are linear subspaces for which the natural map from $H_2(M;\RR)$ to
$H_2(X;\RR)$ takes~$\rR$ into $\rR(X)$.  Then there is a
natural $\RR[[\hbar]]$-module morphism
$$
\Phi_X : H_*^\SFT(M,\xi ; \rR) \to \overline{R_{\rR(X)}}[[\hbar]],
$$
which acts on $R_\rR[[\hbar]] \subset H_*^\SFT(M,\xi;\rR)$ as the
natural map to $R_{\rR(X)}[[\hbar]]$ 
induced by the inclusion $M \hookrightarrow X$.
In particular, the untwisted SFT of $(M,\xi)$ admits an $\RR[[\hbar]]$-module
morphism
$$
\Phi_X : H_*^\SFT(M,\xi) \to \overline{R_{\ker\omega}}[[\hbar]].
$$
\end{prop}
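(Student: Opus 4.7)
The plan is to specialize the general framed-cobordism construction reviewed above to the case where the negative end is empty, and to observe that the relevant chain complex on the empty end simplifies drastically. I would regard the strong filling $(X,\omega)$ as a symplectic cobordism with positive end $(M,\lambda,\ff)$ and empty negative end, and then invoke the machinery producing a chain map $\Phi = e^\phi$. Essentially all the work is to identify the codomain of this map in the present setting.

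For the empty end there are no closed Reeb orbits, so the corresponding algebra $\aA^-_X$ is generated over its coefficient ring by no variables; that is, $\aA^-_X = R_{\rR(X)}$, which after the Novikov completion used in defining cobordism maps becomes $\overline{R_{\rR(X)}}$. The operator $\Dsft^-$ is identically zero, and consequently so is the twisted differential $\mathbf{D}^-_X(Q) = e^{-A}\Dsft^-(e^A\cdot Q)$, regardless of the element $A \in \hbar^{-1}\overline{R_{\rR(X)}}[[\hbar]]$ that counts closed holomorphic curves in the completion of $X$. Thus the homology of $(\aA^-_X[[\hbar]],\mathbf{D}^-_X)$ is just $\overline{R_{\rR(X)}}[[\hbar]]$.

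Next I would check that the coefficient rings $R_\rR$ at the positive end and $R_{\rR(X)}$ in the cobordism are compatible in the sense required by the general construction. This is precisely the content of the hypothesis that the inclusion-induced map $H_2(M;\RR) \to H_2(X;\RR)$ sends $\rR$ into $\rR(X)$: it provides a well-defined ring homomorphism $R_\rR \to R_{\rR(X)}$, extending to the Novikov completion. The general cobordism machinery then delivers a chain map $\Phi = e^\phi : (\aA[[\hbar]],\Dsft) \to (\aA^-_X[[\hbar]],\mathbf{D}^-_X)$, which on passing to homology becomes the desired $\RR[[\hbar]]$-module morphism $\Phi_X : H_*^\SFT(M,\xi;\rR) \to \overline{R_{\rR(X)}}[[\hbar]]$. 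To verify the behaviour on $R_\rR[[\hbar]]$, recall that by construction $\phi(1)=0$, so $\Phi(1)=1$; moreover $\Phi$ is $\hbar$-linear and acts on the ground-ring coefficients through the homomorphism $R_\rR \to R_{\rR(X)}$ above. Hence on $R_\rR[[\hbar]] \subset \aA[[\hbar]]$ the chain map coincides with the inclusion-induced map into $R_{\rR(X)}[[\hbar]]$ composed with the embedding into the Novikov completion, which descends to the claimed morphism on homology. The second assertion, concerning the untwisted case, follows by taking $\rR = H_2(M;\RR)$ and $\rR(X) = \ker\omega$: since $\omega$ is exact in a neighbourhood of $\p X$ the compatibility hypothesis is then automatic.

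The main obstacle is not geometric---no new transversality, compactness or gluing input beyond that underlying the general cobordism construction is required---but rather the bookkeeping of the coefficient rings and the verification that the Novikov completion enters in exactly the stated form. Once that bookkeeping is done, the proposition is essentially an unpacking of the formalism of Section~\ref{sec:SFT}.
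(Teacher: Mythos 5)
Your proof takes essentially the same approach as the paper: view the strong filling as a cobordism with empty negative end, observe that the chain complex $(\aA^-_X[[\hbar]],\mathbf{D}^-_X)$ has no generators besides the unit so its homology reduces to $\overline{R_{\rR(X)}}[[\hbar]]$, check the compatibility of coefficient rings via the hypothesis on $\rR \to \rR(X)$, and note that $\Phi(1)=1$ together with $\hbar$-linearity and $R_\rR$-semilinearity gives the stated behaviour on $R_\rR[[\hbar]]$. The derivation of the untwisted case from exactness of $\omega$ near $\partial X$ also matches the paper's remark.
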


Finally, we generalize the above to allow for stable symplectic fillings
as defined in the introduction.  Recall that if $(X,\omega)$ is a stable
filling of $(M,\xi)$ and we write $\Omega := \omega|_M$, 
then~$\xi$ admits a nondegenerate contact form~$\lambda$
and complex structure~$J_\xi$ such that $\omega|_\xi$ and $d\lambda|_\xi$
both define symplectic bundle structures taming~$J_\xi$, and
the Reeb vector field $X_\lambda$ generates $\ker\Omega$.  In particular,
the pair $(\lambda,\Omega)$ is then a stable Hamiltonian structure,
meaning it satisfies:
\begin{enumerate}
\item $\lambda \wedge \Omega^{n-1} > 0$,
\item $d\Omega = 0$,
\item $\ker\Omega \subset \ker d\lambda$.
\end{enumerate}
A routine Moser deformation argument shows that a neighborhood of~$\p X$
in $(X,\omega)$ can then be identified symplectically with the collar
$$
((-\epsilon,0] \times M, d(t\lambda) + \Omega)
$$
for $\epsilon > 0$ sufficiently small.  Choose a small number
$\epsilon_0 > 0$ and define
$$
\tT := \{ \varphi \in C^\infty([0,\infty) \to [0,\epsilon_0))\ |\ 
\text{$\varphi' > 0$ everywhere and $\varphi(t) = t$ near $t=0$} \}.
$$
Then if $\epsilon_0$ is small enough, every $\varphi \in \tT$ gives rise
to a symplectic form $\omega_\varphi$ on the completion
$\widehat{X} := X \cup_M \left( [0,\infty) \times M \right)$, defined by
$$
\omega_\varphi =
\begin{cases}
\omega & \text{ on~$X$},\\
d\left( \varphi(t)\lambda\right) + \Omega & \text{ on $[0,\infty) \times M$}.
\end{cases}
$$
Define a cylindrical almost complex structure on $[0,\infty) \times M$
which maps $\p_s$ to $X_\lambda$ and restricts to $J_\xi$ on~$\xi$; due
to the compatibility assumptions on $J_\xi$, this is 
$\omega_\varphi$-tame for all possible choices of $\varphi \in \tT$.
We can thus extend it to a generic $\omega_\varphi$-tame almost complex
structure~$J$ on~$\widehat{X}$.  Then one can generalize the previous
discussion by considering punctured $J$-holomorphic curves
$u : \dot{\RS} \to \widehat{X}$ that satisfy the finite energy
condition
$$
E(u) := \sup_{\varphi \in \tT} \int_{\dot{\RS}} u^* \omega_\varphi.
$$
This definition of energy is equivalent to the one given in 
\cite{SFTcompactness} in the sense that bounds on either imply bounds on
the other; it follows that the compactness theorems of
\cite{SFTcompactness} apply to sequences~$u_k$ of punctured 
$J$-holomorphic curves for which $E(u_k)$ is uniformly bounded.
Such a bound exists for any sequence of curves with fixed genus, 
asymptotics and homology class.  Note also that the restriction of~$J$ to the
cylindrical end is also adapted to~$\lambda$ in the usual sense,
thus the upper level curves that appear in holomorphic buildings arising
from the compactness theorem are precisely the curves that are counted
in the definition of $H_*^\SFT(M,\lambda,\ff ; \rR)$.

The above observations yield the following generalization of
Proposition~\ref{prop:strongFillings}:

\begin{prop}
\label{prop:stableFillings}
Suppose $(X,\omega)$ is a stable symplectic
filling of $(M,\xi)$, and $\rR(X) \subset
\ker\omega \subset H_2(X;\RR)$ and $\rR \subset H_2(M;\RR)$ are linear
subspaces such that the natural map $H_2(M;\RR) \to H_2(X;\RR)$ takes
$\rR$ into~$\rR(X)$.  Then there exists a natural $\RR[[\hbar]]$-module
morphism
$$
\Phi_X : H_*^\SFT(M,\xi;\rR) \to \overline{R_{\rR(X)}}[[\hbar]],
$$
which acts on $R_\rR[[\hbar]]$ as the natural map to $R_{\rR(X)}[[\hbar]]$
induced by the inclusion $M \hookrightarrow X$.
In particular, defining a $2$-form on~$M$
by $\Omega = \omega|_M$, the $\Omega$-twisted SFT
of $(M,\xi)$ admits an $\RR[[\hbar]]$-module morphism
$$
\Phi_X : H_*^\SFT(M,\xi,\Omega) \to \overline{R_{\ker\omega}}[[\hbar]].
$$
\end{prop}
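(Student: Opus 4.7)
The proof runs in parallel with Proposition~\ref{prop:strongFillings}, with the stable-filling compactness setup assembled in the paragraphs preceding the statement replacing the usual strong-cobordism framework. The plan is as follows. First, fix a nondegenerate contact form $\lambda$ witnessing the stable structure together with the complex structure $J_\xi$ on~$\xi$, and identify a collar of $\p X$ symplectically with $((-\epsilon,0] \times M, d(t\lambda) + \Omega)$ via the Moser argument already quoted. Choose a generic almost complex structure $J$ on~$\widehat{X}$ which is $\omega_\varphi$-tame for every $\varphi \in \tT$ simultaneously (an open condition, nonempty since $J_\xi$ is tamed by both $d\lambda|_\xi$ and $\omega|_\xi$, hence by $d(\varphi\lambda)|_\xi + \Omega|_\xi$ for every admissible~$\varphi$), and which restricts on the positive end to a $\lambda$-adapted cylindrical structure.

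Using an abstract perturbation scheme~$\ff_X$ compatible with $\ff$ at the positive end, define $\phi_X : \aA^+ \to \overline{R_{\rR(X)}}[[\hbar]]$ by counting suitably perturbed $J$-holomorphic curves in~$\widehat{X}$ with the prescribed positive asymptotics; since the negative end is empty, no such curve carries a negative puncture. The algebra associated to the (empty) negative end reduces to its ground ring $\overline{R_{\rR(X)}}[[\hbar]]$, and the twisted differential $\mathbf{D}^-_X$ is identically zero regardless of the closed-curve invariant~$A$, so the chain map $\Phi_X = e^{\phi_X}$ lands in $\overline{R_{\rR(X)}}[[\hbar]]$ and the chain-map relation reduces to $\Phi_X \circ \Dsft^+ = 0$. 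The latter is the standard SFT cobordism identity recording the boundary of one-dimensional moduli spaces in~$\widehat{X}$: ends arise from breakings along Reeb orbits at the positive end (contributing~$\Dsft^+$) or from interior cancellations via coherent orientations. The key compactness input is the theorem of~\cite{SFTcompactness}, which applies to the generalized energy~$E(u)$ by the equivalence already noted, and the upper-level buildings in any limit are exactly those counted by~$\Dsft^+$.

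The Novikov completion on the target is essential to absorb the possible non-exactness of~$\omega$: for each fixed input and each $\hbar$-coefficient, one needs that only finitely many homology classes $d \in H_2(X;\RR)/\rR(X)$ with $\int_d \omega \le C$ contribute. This is the standard SFT energy estimate, which gives a lower bound on $\int_d \omega$ in terms of the $\omega_\varphi$-area of any contributing curve minus the total action of the positive asymptotics, hence yields convergence in the Novikov variable~$z^d$. Invariance of the induced map on $H^\SFT_*(M,\xi;\rR)$ under the choices of $\lambda$, $J_\xi$, $J$, $\varphi$ and~$\ff_X$ then follows by applying the chain-homotopy arguments from one-parameter families that prove invariance of the ordinary SFT cobordism map, using in particular that~$\tT$ is convex and therefore contractible. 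The claim about $\Phi_X$ acting on $R_\rR[[\hbar]]$ as the inclusion-induced map is then immediate from the construction, since $\phi_X(1)=0$ and the homology class labelling a class $z^c \in R_\rR$ is transported to $H_2(X;\RR)/\rR(X)$ by functoriality of the inclusion $M \hookrightarrow X$.

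The main obstacle is technical rather than conceptual: one must take on faith, in accordance with Remark~\ref{remark:disclaimer}, that the Hofer--Wysocki--Zehnder abstract perturbation scheme extends to the non-exact stable setting with the energy functional~$E(u)$, together with the usual coherence properties for orientations under gluing. Granting this, the construction of~$\Phi_X$ and all its stated properties are formal consequences of the same algebraic manipulations that prove Proposition~\ref{prop:strongFillings}, the only substantive new ingredient being the stable-filling compactness provided by the generalized energy.
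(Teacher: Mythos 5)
Your proposal is correct and follows essentially the same route as the paper: the paper's ``proof'' of this proposition \emph{is} the preparatory discussion preceding its statement (Moser collar, the family $\tT$ and the forms $\omega_\varphi$, the generalized energy $E(u)$, and the observation that upper-level limit curves lie in the symplectization and are therefore those counted by $\Dsft^+$), and the proposition is then deduced from these observations exactly as in Proposition~\ref{prop:strongFillings}. You have made explicit several points the paper leaves implicit --- in particular that the negative-end algebra has no $q$-generators so $\mathbf{D}^-_X$ vanishes regardless of $A$, that the chain-map identity degenerates to $\Phi_X \circ \Dsft^+ = 0$, the Novikov-convergence mechanism, and the invariance argument via contractibility of $\tT$ --- but these are faithful elaborations rather than a different approach.
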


\begin{example}
\label{ex:classifyFillings}
The following shows that aside from defining filling obstructions,
SFT can also provide information as to the classification of symplectic 
fillings.
Consider for instance the tight contact structure~$\xi_0$ on $S^1 \times S^2$, 
which it aquires as the boundary of the Stein domain 
$S^1 \times B^3 \subset T^*S^1 \times \RR^2$.  
Presenting $(S^1 \times S^2,\xi_0)$ via a symmetric summed open book 
with disk-like pages (see Definition~\ref{defn:symmetric}), 
one can find a Reeb orbit that is 
uniquely spanned by two rigid holomorphic planes whose homology classes
differ by the generator $[S^2] := [\{*\} \times S^2] \in 
H_2(S^1 \times S^2;\RR)$. Hence, in the notation established at the
 beginning of this section, the fully twisted SFT
satisfies a relation of the form
$$
[ 1 - z^{[S^2]} ] = 0 \in H_*^\SFT(S^1 \times S^2,\xi_0 ; \{0\}).
$$
Then if $(X,\omega)$ is any weak filling of $(S^1 \times S^2,\xi_0)$,
Proposition~\ref{prop:stableFillings} gives a map from
$H_*^\SFT(S^1 \times S^2,\xi_0 ; \{0\})$ to the Novikov completion of
$\RR[H_2(X;\RR)]$ whose action on $\RR[H_2(M;\RR)][[\hbar]]$ is determined
by the inclusion $S^1 \times S^2 \hookrightarrow X$.  In light of the above
relation, this implies that the natural map $H_2(S^1 \times S^2;\RR) \to
H_2(X;\RR)$ takes $[S^2]$ to zero.  In fact, this is known to be true: 
it follows from the disk filling argument of Eliashberg 
\cite{Eliashberg:diskFilling}, which implies that every
weak filling of $S^1 \times S^2$ is diffeormorphic to a blow-up of
$S^1 \times B^3$.

Another example is provided by the standard $3$-torus $(T^3,\xi_0)$,
which is the boundary of the Stein domain $T^2 \times \DD \subset T^*T^2$
and can also be presented by a symmetric summed open book, but with
cylindrical pages.  One can then choose a $1$-dimensional subspace
$\rR \subset H_2(T^3;\RR)$ with generator~$d_0$ represented by 
a pre-Lagrangian torus, so that counting holomorphic cylinders yields
relations of the form
$$
[ (1 - z^{d_1}) \hbar ] = [ (1 - z^{d_2}) \hbar ] = 0 \in
H_*^\SFT(T^3,\xi_0 ; \rR)
$$
for both of the other canonical generators $d_1,d_2 \in H_2(T^3;\RR)$.
Applying Proposition~\ref{prop:stableFillings} again, one can use this to
show that
for any weak filling $(X,\omega)$ of $(T^3,\xi_0)$ such that
$\int_{d_0} \omega = 0$, and in particular for any strong filling,
the natural map $H_2(T^3,\RR) \to H_2(X;\RR)$
has its image in a space of dimension at most one.  This is also known to
be true: by a combination of arguments in \cite{Wendl:fillable}
and \cite{NiederkruegerWendl}, $(X,\omega)$ must in this case be a
symplectic blow-up of the standard Stein filling $T^2 \times \DD$.
\end{example}

\subsection{Algebraic torsion and its consequences}

As above, we write $\rR$ for some given linear
  subspace in $H_2(M;\RR)$, and use the notation $R_\rR =
\RR[H_2(M;\RR) / \rR]$ for the corresponding group ring.  Recall the
following definition from the introduction: 
\begin{defn}
For any integer $k \ge 0$, we say that $(M,\xi)$ has
algebraic torsion of order $k$ with coefficients in $R_\rR$
if $[\hbar^k] = 0$ in $H^\SFT_*(M,\xi ;\rR)$.  We single out the
following special cases:
\begin{itemize}
\item $(M,\xi)$ has (untwisted) algebraic $k$-torsion if
$[\hbar^k] = 0 \in H_*^\SFT(M,\xi)$.
\item For a closed $2$-form $\Omega$ on~$M$, 
$(M,\xi)$ has \emph{$\Omega$-twisted} algebraic $k$-torsion if
$[\hbar^k] = 0 \in H_*^\SFT(M,\xi,\Omega)$.
\item $(M,\xi)$ has \emph{fully twisted} algebraic $k$-torsion if
$[\hbar^k] = 0 \in H_*^\SFT(M,\xi ; \{0\})$.
\end{itemize}
\end{defn}
By default, when we speak of algebraic torsion without specifying
the coefficients, we will always mean the untwisted version.
Observe that due to the morphisms \eqref{eqn:coefficients}, fully
twisted torsion implies $\Omega$-twisted torsion for all closed $2$-forms
$\Omega$, and it is not hard to show that the converse is also true.
Likewise, $\Omega$-twisted torsion for \emph{any one} closed $2$-form $\Omega$ 
implies untwisted torsion, and $k$-torsion for any choice of 
coefficients implies $(k+1)$-torsion for the same coefficients since 
$\Dsft(Q)=\hbar^k$ implies $\Dsft(\hbar Q) = \hbar^{k+1}$.

\begin{remark}
\label{remark:preferred}
Since all power series in $\RR[[\hbar]]$ are naturally closed elements of the
SFT chain complex, one can define a seemingly more general notion than
algebraic torsion via the condition
$$
[f(\hbar)] = 0 \in H_*^\SFT(M,\xi)
$$
for any nonzero power series $f \in \RR[[\hbar]]$.  In fact, this is not
more general: all elements of the form $1 + \Order(\hbar)$ can be
inverted in $\RR[[\hbar]]$ via alternating series, thus $[f(\hbar)] = 0$
implies untwisted algebraic $k$-torsion where $k \ge 0$ is the largest
integer with $f(\hbar) = \hbar^k g(\hbar)$ for some
$g \in \RR[[\hbar]]$.  
The situation changes when one
considers the vanishing of nonzero elements of $R_\rR[[\hbar]]$ in
$H_*^\SFT(M,\xi ; \rR)$: as shown by Example~\ref{ex:classifyFillings}
above, this does not always imply nonfillability, but it can
yield topological restrictions on the symplectic fillings that exist.
\end{remark}

The special case $k=0$ is not a new concept; the following result
is stated for the untwisted theory but has obvious analogues for any
choice of coefficients~$R_\rR$.
\begin{prop}
\label{prop:algOT}
The following statements are equivalent.
\begin{enumerate}
\renewcommand{\labelenumi}{(\roman{enumi})}
\item $(M,\xi)$ has algebraic $0$-torsion.
\item $H^\SFT_*(M,\xi) = 0$.
\item $(M,\xi)$ is \emph{algebraically overtwisted} in the sense of
\cite{BourgeoisNiederkrueger:algebraically}, i.e.~its contact
homology is trivial.
\end{enumerate}
\end{prop}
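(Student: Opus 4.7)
The plan is to exploit the expansion $\Dsft = \sum_{k\geq 1}D_k\hbar^{k-1}$ and to identify the classical contact homology $HC_*(M,\xi)$ with $H_*(\aA, D_1)$: indeed, the coefficient $D_1$ counts genus-$0$ curves with exactly one positive puncture (since $s_++g=1$ forces $s_+=1$, $g=0$) and is the unique derivation in the expansion, so it is literally the Eliashberg-Givental-Hofer definition of the contact homology differential. With this identification, the three statements are linked by passing between the $\hbar$-adic filtration on $(\aA[[\hbar]],\Dsft)$ and its associated graded $(\aA,D_1)$.

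The implication (ii)$\Rightarrow$(i) is trivial: if $H^\SFT_*(M,\xi)$ is the zero module then every class, including $[\hbar^0]=[1]$, vanishes. For (i)$\Rightarrow$(iii), assume $\Dsft(P)=1$ for some $P=\sum_{i\geq 0}P_i\hbar^i\in\aA[[\hbar]]$; then equating the $\hbar^0$-coefficients gives $D_1(P_0)=1$, so $[1]=0\in HC_*(M,\xi)$. Because $D_1$ is a derivation on the unital commutative (graded) algebra $\aA$, the contact homology $HC_*(M,\xi)$ inherits a unital algebra structure, and for any closed $Q\in\aA$ we then have
\[
Q=Q\cdot 1=Q\cdot D_1(P_0)=\pm D_1(Q\cdot P_0),
\]
so $HC_*(M,\xi)=0$, which is the definition of algebraically overtwisted.

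For (iii)$\Rightarrow$(ii), assume $HC_*(M,\xi)=0$ and take an arbitrary closed element $Q=\sum_{i\geq 0}Q_i\hbar^i\in\aA[[\hbar]]$. Proceed by induction in the $\hbar$-adic filtration: since the $\hbar^0$-part of $\Dsft(Q)=0$ says $D_1(Q_0)=0$, assumption (iii) gives $P_0\in\aA$ with $D_1(P_0)=Q_0$. Then $Q-\Dsft(P_0)$ is closed with vanishing $\hbar^0$-part, so it equals $\hbar\, Q^{(1)}$ for some closed $Q^{(1)}\in\aA[[\hbar]]$, and the same procedure produces $P_1$ with $D_1(P_1)=Q^{(1)}_0$, so that $Q-\Dsft(P_0+\hbar P_1)\in\hbar^2\aA[[\hbar]]$. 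Iterating, the partial sums $\sum_{i=0}^{N}P_i\hbar^i$ converge $\hbar$-adically to an element $P\in\aA[[\hbar]]$ with $\Dsft(P)=Q$, so $H^\SFT_*(M,\xi)=0$.

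The only non-formal step is the inductive lifting in (iii)$\Rightarrow$(ii); the main point requiring care is the $\hbar$-adic convergence of the primitive $P$, but this is automatic because $\aA[[\hbar]]$ is complete in the $\hbar$-adic topology and the correction terms produced at each stage lie in strictly deeper pieces of the filtration. The corresponding statements for twisted coefficients $\rR\subset H_2(M;\RR)$ go through verbatim, since the group ring $R_\rR$ is treated as part of the ground ring throughout, commuting with $\Dsft$.
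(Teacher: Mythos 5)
Your proof is correct, but it takes a genuinely different route from the paper's. The paper isolates the implication (i)~$\Rightarrow$~(ii) as the only nontrivial step and proves it directly, without reference to contact homology, by introducing the odd bracket $[Q_1,Q_2] = \Dsft(Q_1 Q_2) - \Dsft(Q_1)Q_2 - (-1)^{|Q_1|}Q_1\Dsft(Q_2)$ (which is $\Order(\hbar)$ since $D_1$ is a derivation), noting that $\Dsft$ is a derivation of this bracket, and then showing that if $\Dsft(P)=1$, the alternating series $B(Q) = Q - [P,Q] + [P,[P,Q]] - \cdots$ gives a closed-form primitive $P\cdot B(Q)$ of any closed $Q$. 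You instead close the logical cycle (ii)~$\Rightarrow$~(i)~$\Rightarrow$~(iii)~$\Rightarrow$~(ii), routing through contact homology: the step (i)~$\Rightarrow$~(iii) extracts the $\hbar^0$-coefficient and exploits the derivation property of $D_1$, and the nontrivial step (iii)~$\Rightarrow$~(ii) is an $\hbar$-adic lifting argument that builds a primitive as a formal power series by successive corrections modulo higher powers of~$\hbar$. Both arguments ultimately rest on the same two facts --- that the leading term $D_1$ is a derivation and that $\Dsft$ respects the $\hbar$-adic filtration --- but they package them differently. The paper's bracket construction is slicker and produces an explicit primitive in one shot, and it more closely mirrors the argument of Bourgeois--Niederkr\"uger that it adapts; your inductive approach is more elementary and makes the role of the $E_1$-page (contact homology) of the $\hbar$-adic spectral sequence fully transparent, at the cost of having to verify $\hbar$-adic convergence, which you correctly observe is automatic in $\aA[[\hbar]]$. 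One small point: the sign in your display $Q = Q\cdot D_1(P_0) = \pm D_1(Q P_0)$ is $(-1)^{|Q|}$, coming from the graded Leibniz rule applied to the $D_1$-closed element~$Q$.
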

\begin{proof}
The only claim not
immediate from the definitions is that~(i) 
implies~(ii), for which we use a variation on the main argument 
in \cite{BourgeoisNiederkrueger:algebraically}.
For $Q_1,Q_2\in \aA[[\hbar]]$, define
$$
[Q_1,Q_2]:= \Dsft(Q_1Q_2) - \Dsft(Q_1)Q_2 -(-1)^{|Q_1|}Q_1\Dsft(Q_2)
$$
to be the deviation of $\Dsft$ from being a derivation. Note that since
the first term $D_1$ in the expansion of $\Dsft$ is a derivation, we
always have $[Q_1,Q_2] = \Order(\hbar)$. One also easily checks that $\Dsft$ is
a derivation of this bracket, in the sense that
$$
\Dsft[Q_1,Q_2] = -[\Dsft Q_1,Q_2] -(-1)^{|Q_1|}[Q_1,\Dsft Q_2].
$$
These signs are correct because the bracket has odd degree. 

Now suppose $\Dsft(P)=1$, and define a map $B:\aA[[\hbar]]
\to \aA[[\hbar]]$ as an alternating sum of iterated brackets with $P$, i.e. as
$$
B(Q) := Q - [P,Q] + [P,[P,Q]] - \dots
$$
Clearly $[P,B(Q)]= Q-B(Q)$ and $\Dsft(B(Q))=B(\Dsft(Q))$, and so if 
$\Dsft(Q)=0$, then 
$$
\Dsft(P\cdot B(Q)) = [P,B(Q)] + \Dsft(P)\cdot B(Q) = Q-B(Q)+B(Q)=Q,
$$
proving that every closed element in $\aA[[\hbar]]$ is exact.
\end{proof}

With the algebraic formalism in place, the proofs of
Theorems~\ref{thm:obstructions} and~\ref{thm:stable} are now immediate.

\begin{proof}[Proofs of Theorems~\ref{thm:obstructions} and~\ref{thm:stable}]
Suppose $(X,\omega)$ is an exact symplectic cobordism with positive end
$(M^+,\xi^+)$ and negative end $(M^-,\xi^-)$.  Then if
$[\hbar^k] = 0 \in H_*^\SFT(M^+,\xi^+)$, the same must be true in
$H_*^\SFT(M^-,\xi^-)$ due to Proposition~\ref{prop:exactCobordisms}.

Likewise, if $(X,\omega)$ is a strong filling of $(M,\xi)$, then
Proposition~\ref{prop:strongFillings} gives an $\RR[[\hbar]]$-module
morphism from $H_*^\SFT(M,\xi)$ to $\overline{R_{\rR(X)}}[[\hbar]]$, where
$\overline{R_{\rR(X)}}$ is the Novikov completion of
$\RR[H_2(X;\RR) / \ker\omega]$.  Since no power of $\hbar$ vanishes
in $\overline{R_{\rR(X)}}[[\hbar]]$, the same must be true in
$H_*^\SFT(M,\xi)$, completing the proof of Theorem~\ref{thm:obstructions}.
Theorem~\ref{thm:stable} follows by exactly the same argument, using
Proposition~\ref{prop:stableFillings} and observing that
$H_*^\SFT(M,\xi,\Omega)$ depends only on $(M,\xi)$ and the cohomology
class of~$\Omega$.
\end{proof}

\section{Relation to planar torsion in dimension 3}
\label{sec:planar}
This section describes the relation of algebraic torsion to planar
torsion, and in particular provides the proof of
Theorem~\ref{thm:torsion}.

\subsection{Review of planar torsion}
We begin by reviewing briefly the notion of planar torsion, which is
defined in more detail in \cite{Wendl:openbook2}.  
A \emph{planar torsion domain} is a special type
of contact manifold with boundary which generalizes the thickened torus
$(T^2 \times [0,1],\xi_T)$ in the definition of Giroux torsion.  We can
define it in terms of open book decompositions as follows.

Recall first that if $\check{M}$ is a closed oriented (not necessarily
connected) $3$-manifold with an open
book decomposition $\check{\pi} : \check{M} \setminus \check{B} \to S^1$, 
then the open book can be
``blown up'' along part of its binding to produce a manifold with boundary: 
for any given binding component $\gamma \subset \check{B}$, this means replacing
$\gamma$ with its unit normal bundle.  The latter is then a $2$-torus~$T$
in the boundary of the blown up manifold~$M$, and it comes
with a canonical homology basis $\{\mu,\lambda\} \subset H_1(T)$, where
$\mu$ is the meridian around the boundary of a neighborhood of~$\gamma$
and $\lambda$ is a boundary component of a page.  Given any two
binding components $\gamma_1, \gamma_2 \subset \check{B}$, one can then produce
a new manifold via a so-called \emph{binding sum}, which consists of
the following two steps:
\begin{enumerate}
\item Blow up at $\gamma_1$ and $\gamma_2$ to produce boundary
tori $T_1$ and $T_2$ with canonical homology bases
$\{ \mu_1,\lambda_1\}$ and $\{\mu_2,\lambda_2\}$ respectively.
\item Attach $T_1$ to $T_2$ via an orientation reversing
diffeomorphism $T_1 \to T_2$ that maps
$\lambda_1$ to $\lambda_2$ and $\mu_1$ to $-\mu_2$.
\end{enumerate}
Combining both the blow-up and binding sum operations for a given
closed manifold with an open book $\check{\pi} : \check{M}
\setminus \check{B} \to S^1$, one obtains a compact manifold
$M$, possibly with boundary, carrying a fibration
$$
\pi : M \setminus (B \cup \iI) \to S^1,
$$
where~$B$ is an oriented (possibly empty) link consisting of all 
components of $\check{B}$ that have not been blown up, and~$\iI$ is 
a special (also possibly empty) collection of $2$-tori 
which are each the result of identifying two blown up
binding components in a binding sum.  The tori 
$T \subset \iI \cup \p M$ each carry canonical homology 
bases $\{ \mu,\lambda\} \subset
H_1(T)$, where for $T \in \iI$, $\mu$ is defined only up
to a sign.  These homology bases together with the fibration~$\pi$
determine a so-called 
\emph{blown up summed open book}~$\boldsymbol{\pi}$ on~$M$, with
\emph{binding}~$B$ and \emph{interface}~$\iI$.  Its \emph{pages} are
the connected components of the fibers $\pi^{-1}(\text{const})$.
We call a blown up summed open book \emph{irreducible} if the fibers
$\pi^{-1}(\text{const})$ are connected, which means it contains only a
single $S^1$-family of pages.  In general, every manifold~$M$ with a blown 
up summed open book~$\boldsymbol{\pi}$
can be written as a union of \emph{irreducible subdomains},
$$
M = M_1 \cup \ldots \cup M_n,
$$
where $M_i$ are manifolds with boundary that each carry 
irreducible blown up summed open books
$\boldsymbol{\pi}_i$, whose pages are pages of~$\boldsymbol{\pi}$, and
they are attached to each other along tori in the 
interface of~$\boldsymbol{\pi}$.

Just as an open book on $M$ determines a special class of contact forms, 
we define a \emph{Giroux form} on a manifold~$M$ with a blown up summed  
open book to be any contact form~$\lambda$ with the following properties:
\begin{enumerate}
\item The Reeb vector field~$X_\lambda$ is everywhere
positively transverse to the pages and positively tangent to the oriented
boundaries of their closures,
\item The characteristic foliation cut out by $\xi=\ker\lambda$ on each
boundary or interface torus $T \subset \iI \cup \p M$ has closed leaves
in the homology class of the meridian.
\end{enumerate}
Note that whenever $\lambda$ is a Giroux form, the binding consists of
periodic orbits of~$X_\lambda$, and each torus in $\iI \cup \p M$ is
foliated by periodic orbits.
A Giroux form can be defined for any blown up summed open book that contains
no closed pages, and it is then unique up to deformation.  We say that 
a contact structure~$\xi$ on~$M$ is
\emph{supported} by a given blown up summed open book if and only if
it can be written
as the kernel of a Giroux form.  The effect of a binding sum on supported
contact structures is then equivalent to a special case of the 
\emph{contact fiber sum} defined by Gromov \cite{Gromov:PDRs} and 
Geiges \cite{Geiges:constructions}.

\begin{defn}\label{defn:symmetric}
A blown up summed open book is called \emph{symmetric} if it has
no boundary and contains
exactly two irreducible subdomains, each with pages
of the same topological type, and each with empty binding and
(interior) interface.
\end{defn}
Symmetric examples are constructed in general by taking any two open books
with diffeomorphic pages, choosing an oriented diffeomorphism from the binding
of one to the binding of the other and
constructing the corresponding binding sum on their disjoint union.
Supported contact manifolds that arise in this way include the tight
$S^1 \times S^2$ (with disk-like pages) and the standard~$T^3$ (cylindrical
pages).

We call an irreducible blown up summed open book \emph{planar} if its
pages have genus~$0$, and a general blown up summed open book is then
\emph{partially planar} if it contains a planar irreducible subdomain
in its interior.

\begin{defn}
For any integer $k \ge 0$,
a \emph{planar torsion domain of order~$k$} (or simply
\emph{planar $k$-torsion domain}) is a connected contact
$3$-manifold $(M,\xi)$, possibly with boundary, with a supporting
blown up summed open book $\boldsymbol{\pi}$ such that:
\begin{enumerate}
\item
$M$ contains a planar irreducible subdomain $M^P \subset M$ in its
interior, whose pages have $k+1$ boundary components,
\item
$M \setminus M^P$ is not empty, and
\item
$\boldsymbol{\pi}$ is not symmetric.
\end{enumerate}
We then call the subdomains $M^P$ and $\overline{M\setminus M^P}$ the
\emph{planar piece} and the \emph{padding} respectively.

A contact $3$-manifold is said to have \emph{planar $k$-torsion} whenever
it admits a contact embedding of a planar $k$-torsion domain.
\end{defn}

\begin{defn}
\label{defn:planarSeparating}
Suppose $(M,\xi)$ is a contact $3$-manifold containing a planar
$k$-torsion domain $M_0 \subset M$ with planar piece $M_0^P$ for
some $k \ge 0$, and $\Omega$ is a closed $2$-form on~$M$.  If every
interface torus $T \subset M_0$ lying in $M_0^P$ satisfies
$\int_T \Omega = 0$, then we say that $(M,\xi)$ has
\emph{$\Omega$-separating} planar $k$-torsion.  We say that $(M,\xi)$ has
\emph{fully separating} planar $k$-torsion if this is true for every
closed $2$-form on~$M$, or equivalently, each of the relevant
interface tori separates~$M$.
\end{defn}

\begin{figure}
\includegraphics{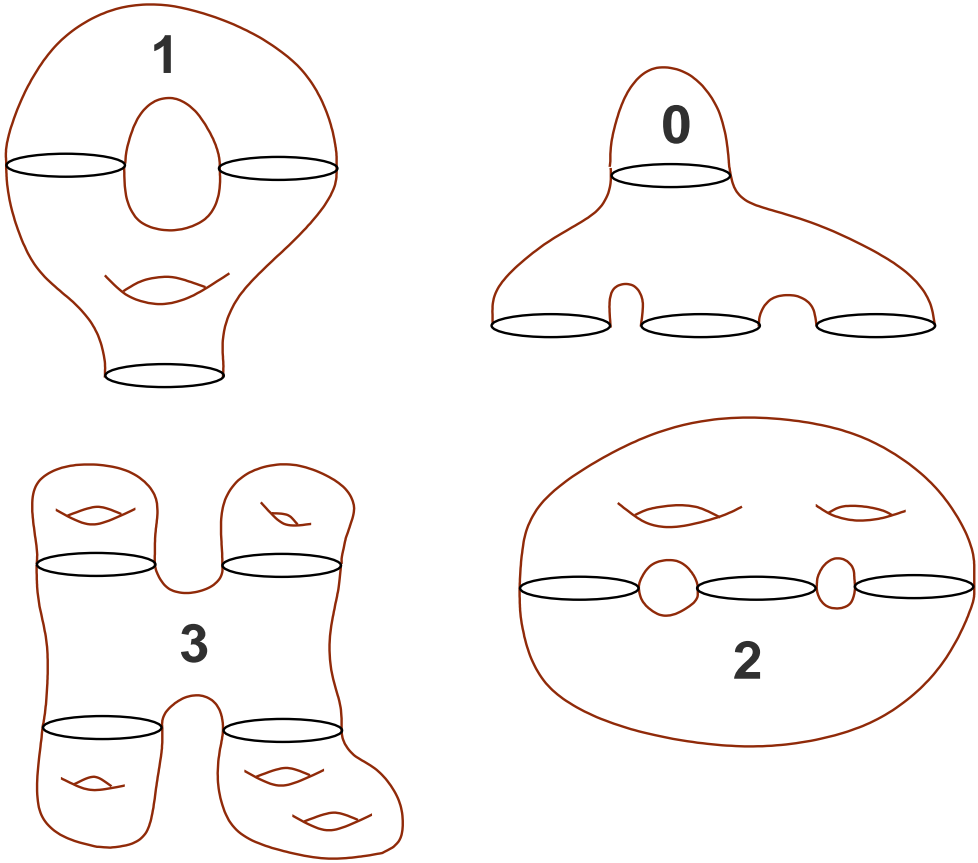}
\caption{\label{fig:torsionDomains}
Some examples of convex surfaces and dividing sets that determine 
$S^1$-invariant
planar torsion domains, of orders~$1$, $0$, $3$ and~$2$ respectively.  
The examples at the top right and bottom left are both fully
separating. 
The bottom right example defines a closed manifold contactomorphic to the
example $(V_4,\xi_3)$ from Theorem~\ref{thm:higherOrder}.  Note that in this
case, it's important that the two surfaces on either side of the dividing
set are not diffeomorphic (so that the summed open book is not
symmetric).}
\end{figure}

\begin{example}
\label{ex:planarTorsion}
The simplest examples of planar torsion domains have the form $S^1
\times \Sigma$, where $\Sigma$ is an orientable surface (possibly with
boundary), the contact structure is $S^1$-invariant and the resulting
dividing set $\Gamma \subset \Sigma$ contains the boundary.  This may
be viewed as a blown up summed open book whose pages are the connected
components of $\Sigma \setminus \Gamma$, so the binding is empty, and
the interface and boundary together are $S^1 \times \Gamma$.
Some special cases are shown in Figure~\ref{fig:torsionDomains}.
\end{example}

\begin{remark}
Another phenomenon that is allowed by the definition but not seen in
the cases $S^1 \times \Surface$ of Example~\ref{ex:planarTorsion} is for an
irreducible subdomain to have interface tori in its interior, due to
summing of a single connected open book to itself at different binding
components.  Examples of this are shown in 
Figure~\ref{fig:anotherTorsionDomain}, which also illustrates the fact
that the choice of planar piece (and consequently the order of planar
torsion) is not always unique, even for a fixed planar torsion domain.
\end{remark}

\begin{figure}
\includegraphics{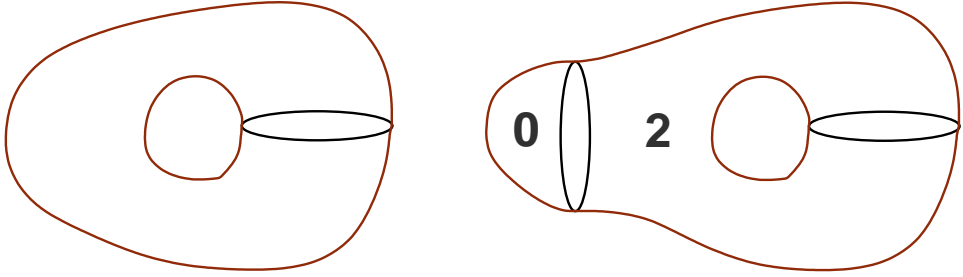}
\caption{\label{fig:anotherTorsionDomain}
Schematic representations of two summed open books that include
``self summing'', i.e.~interface tori in the interior of an irredubicle
subdomain.  Assuming trivial monodromy, the example at the left is obtained
from the tight $S^1 \times S^2$ with its obvious cylindrical open book by
summing one binding component to the other: the result is a Stein fillable
contact structure on the torus bundle over~$S^1$ with monodromy~$-1$.
At the right, the additional subdomain with disk-like pages turns it into a
planar torsion domain: the $3$-manifold is the same, but the contact
structure is changed by a half Lutz twist and is thus overtwisted.  
Note that in this example either
irreducible subdomain can be taken as the planar piece,
so it is both a $0$-torsion domain and a $2$-torsion domain.
}
\end{figure}

It is shown in \cite{Wendl:openbook2} that a contact manifold has
planar $0$-torsion if and only if it is overtwisted, and every contact
manifold with Giroux torsion also has planar $1$-torsion.  The latter
is the reason why Theorem~\ref{thm:torsion} implies Theorem~\ref{thm:Giroux}.

\subsection{Proof of Theorem~\ref{thm:torsion}}
With these definitions in place, Theorem~\ref{thm:torsion} follows easily from
an existence and uniqueness result proved in \cite{Wendl:openbook2} for 
$J$-holomorphic curves in blown up summed open books.  Namely, suppose
$(M,\xi)$ is a closed contact $3$-manifold containing a compact and
connected $3$-dimensional submanifold
$M_0$, possibly with boundary, on which $\xi$ is supported by a blown up
summed open book $\boldsymbol{\pi}$ with binding~$B$, interface~$\iI$ and 
induced fibration
$\pi : M_0 \setminus (B \cup \iI) \to S^1$.  Assume there are 
$N \ge 2$ irreducible subdomains
$$
M_0 = M_1 \cup \ldots \cup M_N,
$$
of which $M_1$ lies fully in the interior of~$M_0$,
and denote the corresponding restrictions of~$\pi$ by
$$
\pi_i : M_i \setminus (B_i \cup \iI_i) \to S^1
$$
for $i=1,\ldots,N$, with $B_i := B \cap M_i$ and $\iI_i :=
\iI \cap \interior{M_i}$.  Note that while~$\pi$ itself is not 
necessarily well defined at $\p M_i$, $\pi_i$ always has a continuous extension
to~$\p M_i$.
Assume the pages in~$M_i$ have genus $g_i \ge 0$,
where $g_1 = 0$.  In particular, $M_0$ is a planar torsion
  domain with planar piece $M_1$.

\begin{prop}[\cite{Wendl:openbook2}]
\label{prop:openbook2}
For any number $\tau_0 > 0$, $(M,\xi)$ admits a Morse-Bott contact
form $\lambda$ and compatible Fredholm regular almost complex structure $J$
with the following properties.
\begin{enumerate}
\item On $M_0$, $\lambda$ is a Giroux form for~$\boldsymbol{\pi}$.
\item The Reeb orbits in $B$ are nondegenerate and elliptic, and the
components of $\iI \cup \p M_0$ are all Morse-Bott submanifolds.
\item All Reeb orbits in $B_1 \cup \iI_1 \cup \p M_1$ have minimal period at
most~$\tau_0$, while every other closed orbit of the Reeb vector
field~$X_\lambda$ in~$M$ has minimal period at least~$1$.
\item For each irreducible subdomain $M_i$ with $g_i = 0$, the fibration
$\pi_i : M_i \setminus (B_i \cup \iI_i) \to S^1$ admits a $C^\infty$-small 
perturbation
$\hat{\pi}_i : M_i \setminus (B_i \cup \iI_i) \to S^1$ such that the 
interior of each fiber $\hat{\pi}_i^{-1}(\tau)$ for $\tau \in S^1$ lifts 
uniquely to an $\RR$-invariant family of properly embedded surfaces
$$
S^{(i)}_{\sigma,\tau} \subset \RR \times M_i,
\qquad (\sigma,\tau) \in \RR\times S^1,
$$
which are the images of embedded finite energy $J$-holomorphic curves
$$
u^{(i)}_{\sigma,\tau} = (a^{(i)}_\tau + \sigma, F^{(i)}_\tau)
: \dot{\RS}_i \to \RR \times M_i,
$$
all of them Fredholm regular with index~$2$, and with only positive ends.
\item Suppose $u : \dot{\RS} \to \RR \times M$ is a finite energy
punctured $J$-holomorphic curve which is not a cover of a trivial
cylinder, and such that all its positive asymptotic orbits  are
simply covered and contained in $B_1 \cup \iI_1 \cup \p M_1$, with at most one
positive end approaching each connected component of $B_1 \cup \p M_1$ 
and at most two approaching each connected component of~$\iI_1$.
Then $u$ has genus zero and parametrizes one of the
surfaces $S^{(i)}_{\sigma,\tau}$ described above.
\end{enumerate}
\end{prop}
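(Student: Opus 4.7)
The plan is to follow the construction carried out in \cite{Wendl:openbook2}, with one additional rescaling step to achieve the period bounds in property~(3). I would begin by building a Giroux form $\lambda_0$ for $\boldsymbol{\pi}$ on $M_0$ by the standard local model: in cylindrical coordinates around each binding circle, use a contact form of the form $f(r)\,d\theta + g(r)\,d\phi$, and away from $B \cup \iI \cup \p M_0$ take a $1$-form whose differential is a positive area form on the pages and whose restriction to each interface/boundary torus is a multiple of the meridional direction. This makes $B$ into nondegenerate elliptic orbits and each component of $\iI \cup \p M_0$ into a Morse-Bott torus of Reeb orbits in the meridional class. After extending $\lambda_0$ arbitrarily to a contact form $\lambda$ on all of $M$, I would multiply by a positive function, equal to a small constant on a neighborhood of $M_1$ and to a large constant elsewhere, to enforce property~(3); a further generic small Morse-Bott perturbation of $\lambda$ away from a neighborhood of $M_1$ keeps all remaining short-period orbits of length at least $1$.

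For the almost complex structure, on each planar irreducible subdomain $M_i$ I would first perturb $\pi_i$ smoothly near $B_i \cup \iI_i \cup \p M_i$ to obtain the fibration $\hat{\pi}_i$ whose fibers are genus-zero surfaces with cylindrical ends approaching the orbits in $B_i \cup \iI_i \cup \p M_i$. I would then choose $J$ adapted to $\lambda$ and tangent to the resulting foliation-by-pages on $M_i$, so that the $\RR$-invariant lifts $S^{(i)}_{\sigma,\tau}$ of the pages are automatically $J$-holomorphic. Fredholm regularity of the resulting curves $u^{(i)}_{\sigma,\tau}$ of index~$2$ follows from the automatic transversality criterion for immersed genus-zero punctured curves with only simply covered ends developed in \cite{Wendl:openbook2}, combined with a count of asymptotic windings showing that the normal Cauchy--Riemann operator has vanishing cokernel.

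The main obstacle is the uniqueness statement~(5), which I would establish via Siefring's intersection theory for punctured pseudoholomorphic curves, exactly as in \cite{Wendl:openbook2}. The idea is to compute the algebraic intersection number $u * u^{(i)}_{\sigma,\tau}$ in terms of the asymptotic data of~$u$ and the page curves; the combinatorial hypotheses on the positive ends of~$u$ (simple covers, at most one end at each orbit of $B_1 \cup \p M_1$, at most two at each component of $\iI_1$) are precisely what is needed to make this intersection number vanish for all $(\sigma,\tau)$, after accounting for asymptotic contributions coming from relative winding invariants and the Conley--Zehnder indices of the orbits in the canonical page trivialization.

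Once the algebraic intersection vanishes, positivity of intersections forces $u$ to be disjoint from each $u^{(i)}_{\sigma,\tau}$, unless it coincides with one of them. Since the family $\{u^{(i)}_{\sigma,\tau}\}$ foliates $\RR \times (M_i \setminus (B_i \cup \iI_i \cup \p M_i))$ up to the trivial cylinders over the special orbits, and $u$ is not such a cover by hypothesis, its image must lie entirely inside this foliation, which forces $u = u^{(i)}_{\sigma,\tau}$ for some $(i,\sigma,\tau)$ together with the genus-zero conclusion. The bulk of the work lies in the intersection calculation and the careful bookkeeping of asymptotic windings; everything else is a direct application of the constructions and results already developed in \cite{Wendl:openbook2}.
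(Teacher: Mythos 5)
The paper does not give its own proof of this proposition: it is explicitly attributed (as \cite{Wendl:openbook2}) and used as a black box, so there is no in-paper argument to compare against. Your sketch is a plausible high-level reconstruction of how that external proof goes, and you correctly identify the key ingredients: a Giroux-form model near $B\cup\iI\cup\partial M_0$, a choice of~$J$ adapted to the page foliation, automatic transversality (via the criterion $\ind(u)>2g-2+\#\Gamma_0$ that the paper also cites for Prop.~\ref{prop:regular}), and Siefring's intersection theory for uniqueness.

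There is, however, a gap in the final step of your uniqueness argument. From positivity of intersections you conclude that $u$ is disjoint from every leaf $u^{(i)}_{\sigma,\tau}$, and you then say ``its image must lie entirely inside this foliation.'' This does not follow: the pages $S^{(i)}_{\sigma,\tau}$ (together with trivial cylinders over orbits in $B_i\cup\iI_i\cup\partial M_i$) foliate only the $\RR\times M_i$ for the planar subdomains, not all of $\RR\times M$. A curve disjoint from all these leaves could a priori live in the padding (which may have positive-genus pages, or lie outside $M_0$ entirely), so one must separately rule this out. That step is not a corollary of the intersection computation; it requires using the hypotheses on the positive ends of $u$ -- that they are all at orbits in $B_1\cup\iI_1\cup\partial M_1$ and are simply covered with the stated multiplicity bounds -- together with an argument (e.g.\ via stability of the finite energy foliation, or a direct maximum-principle/winding argument showing $u$ cannot cross the tori in $\iI\cup\partial M_0$) that confines $u$ to the region covered by the foliation. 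This is where the substantive content of the proposition lies, and your sketch glosses over it. A secondary, smaller point: the rescaling step you propose to obtain property~(3) needs care, because multiplying a contact form by a non-constant function changes the Reeb dynamics non-trivially in the transition region and can create short orbits there; the construction in \cite{Wendl:openbook2} engineers the model contact form directly so that the periods satisfy the desired bounds.
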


Recall that a $J$-holomorphic curve is called \emph{Fredholm regular}
if it corresponds to a transversal intersection of the appropriate
section of a Banach space bundle with the zero-section, see for example
\cite{Wendl:automatic}.  We also say that~$J$ is Fredholm regular
if every somewhere injective $J$-holomorphic curve is Fredholm regular;
this is a generic condition due to \cite{Dragnev}.
If $u$ is a rigid curve that is Fredholm regular, this implies in
particular that~$u$ can be perturbed uniquely to a solution of any sufficiently
small perturbation of the nonlinear Cauchy-Riemann equation.

\begin{proof}[Proof of Theorem~\ref{thm:torsion}]
The following is an adaptation of the argument used in
\cite{Wendl:openbook2} to show that planar torsion kills the ECH
contact invariant, and it can similarly be used to compute an upper
bound on the integer $f_{\text{simp}}^T(M,\lambda,J)$ 
defined via ECH in the appendix.
Given a closed $2$-form~$\Omega$ on~$M$,
let $k_0 \leq k$ be the smallest order of $\Omega$-separating
planar torsion that $(M,\xi)$ admits.
We will prove that $(M,\xi)$ then has $\Omega$-twisted 
algebraic $k_0$-torsion, which as previously observed, implies algebraic 
$k$-torsion.  The statement for untwisted algebraic torsion is then
the special case where $\Omega=0$. Throughout the proof, for any $d \in
H_2(M;\RR)$, we denote by 
$$
\bar{d} \in H_2(M ; \RR) / \ker\Omega
$$
the corresponding equivalence class.

Suppose $M_0 \subset (M,\xi)$ is a planar $k_0$-torsion domain with
planar piece $M_0^P \subset M_0$,  
such that $[T] \subset \ker\Omega \subset H_2(M;\RR)$ for every
interface torus~$T$ lying in~$M_0^P$.
Denote by
$$
\pi^P : M_0^P \setminus (B^P \cup \iI^P) \to S^1
$$
the corresponding fibration in the planar piece.  Write the connected
components of the binding, interface and boundary respectively as
\begin{equation*}
\begin{split}
B^P &= \gamma_1 \cup \ldots \cup \gamma_m,\\
\p M_0^P &= T_1 \cup \ldots \cup T_n,\\
\iI^P &= T_{n+1} \cup \ldots \cup T_{n+r},
\end{split}
\end{equation*}
where by definition we have
$$
m + n + 2r = k_0 + 1 \quad\text{ and }\quad n \ge 1.
$$
Now given the special Morse-Bott contact form $\lambda_0$ and compatible
almost complex structure~$J_0$ provided by Proposition~\ref{prop:openbook2},
we consider the moduli space
$$
\mM(J_0) := \mM_0(\gamma_1,\ldots,\gamma_m,T_1,\ldots,T_n,
T_{n+1},T_{n+1},\ldots,T_{n+r},T_{n+r} ; J_0)
$$
of unparametrized $J_0$-holomorphic curves $u : \dot{\RS} \to \RR\times M$
such that
\begin{enumerate}
\item
$\dot{\RS}$ has genus~$0$, no negative punctures and $m+n+2r$ positive 
punctures
$$
z_1,\ldots,z_m,\zeta_1,\ldots,\zeta_n,w_1^+,w_1^-,\ldots,w_r^+,w_r^-
$$
\item
For the punctures listed above, $u$ approaches the simply covered 
orbit~$\gamma_i$ at~$z_i$, any simply covered orbit in $T_i$
at $\zeta_i$ and any simply covered orbit in~$T_{n+i}$ at both
$w_i^+$ and $w_i^-$.
\end{enumerate}
By Prop.~\ref{prop:openbook2}, $\mM$ is a connected $2$-dimensional
manifold consisting of an $\RR$-invariant family of embedded Fredholm 
regular curves that project to the pages in~$M_0^P$.  Note here we are
using the fact that the blown up summed open book on $M_0$ is not
symmetric, so in particular the padding $M_0 \setminus M_0^P$ cannot
contain additional genus~$0$ curves with the asymptotic behavior 
that defines~$\mM(J_0)$.  It also cannot contain any genus~$0$ curves 
asymptotic to a proper subset of the same orbits, as this would mean
the existence of an $\Omega$-separating
planar torsion domain with order less than~$k_0$.

We next perturb the Morse-Bott data $(\lambda_0,J_0)$ 
to generic nondegenerate data $(\lambda,J)$ by the 
scheme described in \cite{Bourgeois:thesis}, extend~$J$ to a
suitable framing~$\ff$ and assume that
$H_*^\SFT(M,\lambda,\ff,\Omega)$
is well defined (see Remark~\ref{remark:abstract}
below).  Recall that the perturbation to nondegenerate data
is achieved by choosing
a Morse function on each of the relevant Morse-Bott families of orbits
and using it to alter the contact form in small neighborhoods of these
families.  In our case, each Morse-bott family is parametrized by a circle,
so we may assume without loss of generality that our Morse function on~$S^1$
has exactly two critical points, which correspond to the two orbits in
the family that survive as nondegenerate orbits after the perturbation.
Moreover, $J$-holomorphic curves are obtained as
perturbations of $J_0$-holomorphic ``cascades'', i.e. multi-level buildings 
composed of a mixture of holomorphic curves with gradient flow lines along 
the Morse-Bott manifolds.  We may therefore assume after the
perturbation that each of the tori $T_i$ for $i=1,\ldots,n+r$
contains two nondegenerate simple Reeb orbits $\gamma_i^e$ and
$\gamma_i^h$, elliptic and hyperbolic respectively.  These orbits
come with preferred framings determined by the tangent spaces to~$T_i$,
and in these framings their Conley-Zehnder indices are
$$
\muCZ(\gamma_i^e) = 1 \quad\text{ and }\quad
\muCZ(\gamma_i^h) = 0.
$$
There are also two embedded $J$-holomorphic index~$1$ cylinders
(corresponding to gradient flow lines along the Morse-Bott family)
$$
v_i^\pm : \RR \times S^1 \to \RR \times M
$$
whose projections to~$M$ are disjoint and fill the two regions in~$T_i$
separated by $\gamma_i^e$ and $\gamma_i^h$, so the homology classes
they represent are related to each other by
$$
[v_i^+] - [v_i^-] = [T_i] \in H_2(M;\RR),
$$
and for a suitable choice of coherent orientation, these two together
contribute terms of the form
$$
(z^{\overline{[T_i]}} - 1) q_{\gamma_i^h} \frac{\p}{\p q_{\gamma_i^e}}
$$
to the operator $\Dsft$.
The curves in $\mM(J_0)$ likewise give rise to a unique
$J$-holomorphic punctured sphere in the space
$$
\mM(J) := \mM_0(\gamma_1,\ldots,\gamma_m,\gamma_1^h,\gamma_2^e,\ldots,\gamma^e_n,
\gamma^e_{n+1},\gamma^e_{n+1},\ldots,\gamma^e_{n+r},\gamma^e_{n+r} ; J)
$$
with puncture $\zeta_1$ asymptotic to $\gamma^h_1$ and all other punctures
asymptotic to elliptic orbits.  This curve is embedded and has
index~$1$, thus if $d \in H_2(M;\RR)$ denotes the homology class defined by
the pages in $M_0^P$ with attached capping surfaces, then this curve
produces a term
$$
z^{\bar{d}} \hbar^{m+n+2r-1} \frac{\p}{\p q_{\gamma_1^h}}
\prod_{i=1}^m \frac{\p}{\p q_{\gamma_i}}
\prod_{i=2}^n \frac{\p}{\p q_{\gamma_i^e}}
\prod_{i=1}^r {\frac 1 2} \frac{\p}{\p q_{\gamma_{n+i}^e}}\frac{\p}{\p q_{\gamma_{n+i}^e}}
$$
in $\Dsft$.  We thus define the monomial
$$
F = q_{\gamma_1}\ldots q_{\gamma_m} q_{\gamma_1^h} 
q_{\gamma_2^e} \ldots q_{\gamma_n^e}
q_{\gamma_{n+1}^e} q_{\gamma_{n+1}^e} \ldots
q_{\gamma_{n+r}^e} q_{\gamma_{n+r}^e}
$$
and compute,
$$
\Dsft F = z^{\bar{d}} \hbar^{k_0} + 
\sum_{i=2}^{n+r} ( z^{\overline{[T_i]}} - 1) q_{\gamma_i^h} 
\frac{\p F}{\p q_{\gamma_i^e}}.
$$
Every term in the summation now vanishes since
$[T_i] \subset \ker\Omega$, implying that
$\hbar^{k_0}$ is exact.
\end{proof}

\begin{remark}
\label{remark:abstract}
To make the above computation fully rigorous, one must show that the relevant
count of curves doesn't change under a suitable abstract perturbation,
e.g.~as provided by \cite{Hofer:polyfolds}.  The curves that were counted
in the above argument are Fredholm regular and will thus survive any
such perturbation, but we also need to check that no additional curves
appear.  If any such curves exist, then in the unperturbed limit they must
give rise to nontrivial holomorphic \emph{cascades} in the natural 
compactification of~$\mM(J_0)$, 
see~\cite{SFTcompactness}.  It suffices therefore to observe that in
the above setup, all possible cascades are accounted for by the
$J_0$-holomorphic pages in~$M_0^P$,
due to the uniqueness statement in Prop.~\ref{prop:openbook2}.
\end{remark}

\section{$S^1$-invariant examples in dimension 3}
\label{sec:higherOrder}

In this section we consider the special examples
$(S^1 \times \Surface, \xi_\Gamma)$
described in the introduction, and prove in particular
Theorems~\ref{thm:noGiroux} and~\ref{thm:higherOrder}.
Note that the examples $(V_g,\xi_k)$ of Theorem~\ref{thm:higherOrder} can be
constructed via a summed open book as follows.  Fix $g\geq k \ge 1$, and let
$(M_-,\xi_-)$ denote the closed contact $3$-manifold supported by
a planar open book $\pi_- : M_- \setminus B_- \to S^1$ with $k$ binding 
components and trivial monodromy.
Similarly, let $(M_+,\xi_+)$ be the contact manifold supported by an
open book $\pi_+ : M_+ \setminus B_+ \to S^1$
with pages of genus $g-k+1 > 0$, $k$ binding components and
trivial monodromy.  Choosing any one-to-one correspondence between the
connected components of $B_+$ and $B_-$, we produce a new closed contact
manifold $(M,\xi)$ by taking the binding sum of $(M_+,\xi_+) \sqcup
(M_-,\xi_-)$ along corresponding binding components as described
in \S\ref{sec:planar}; this produces a closed planar $(k-1)$-torsion domain
which is contactomorphic to $(V_g,\xi_k)$.

To complete the proof of Theorem~\ref{thm:higherOrder}, we will
have to show that certain types of holomorphic curves in
$\RR\times V_g$ do \emph{not} exist (at least algebraically), which would
need to exist if $\hbar^{k-2}$ were exact 
(see Lemma~\ref{lemma:algHyperTorsionFree} below).  To do this, 
we will construct a precise model for contact manifolds of the form
$(S^1 \times \Surface, \xi_\Gamma)$, in which all the relevant holomorphic curves
can be classified.  The proof of Theorem~\ref{thm:noGiroux} will also
follow immediately from this classification.

\subsection{Holomorphic curves in $(S^1 \times \Surface,\xi_\Gamma)$}

The basic idea of our model for $(S^1 \times \Surface,\xi_\Gamma)$ will be
to choose data so that the singular foliation of~$\Surface$
defined by the gradient flow lines of a suitable Morse function gives
rise to a foliation of the symplectization by 
holomorphic cylinders, which can be counted by Morse homology.
We will then be able to exclude
all the other relevant curves by a combination
of intersection arguments and index estimates.

For the constructions carried out below, the following lemma turns out
to be convenient.
\begin{lemma}\label{lem:conf}
Suppose $\Surface$ is a compact connected oriented surface with nonempty boundary,
and $\tilde{h} : \Surface \to \RR$ is a smooth Morse function with all critical
points in the interior and none of index~$2$, and with 
$\p \Surface = \tilde{h}^{-1}(1)$.
Then there exists a conformal structure $j$ on $\Surface$, compatible with
the orientation, and a smooth, strictly increasing function 
$\varphi : \RR \to \RR$ 
such that $h := \varphi \circ \tilde{h}: \Surface \to \RR$ satisfies
$$
- d( dh \circ j) >0,
$$
and each boundary component has a collar neighborhood
biholomorphically identified with $(-\delta,0] \times S^1$ for some
small $\delta > 0$, so that in these holomorphic coordinates $(s,t) \in
(-\delta,0] \times S^1$ we have   
$$
h(s,t) = e^s.
$$
\end{lemma}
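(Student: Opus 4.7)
\emph{Approach.} In any $j$-holomorphic local chart $(x,y)$ with $j\p_x = \p_y$ one computes $dh \circ j = h_y\,dx - h_x\,dy$, so
\[
-d(dh\circ j) \;=\; (h_{xx}+h_{yy})\,dx\wedge dy \;=\; \Delta_j h\,dx\wedge dy,
\]
and the required positivity of $-d(dh\circ j)$ is exactly strict subharmonicity $\Delta_j h > 0$. Writing $h = \varphi(\tilde h)$ gives
\[
\Delta_j h \;=\; \varphi''(\tilde h)\,|\nabla_j \tilde h|^2 \;+\; \varphi'(\tilde h)\,\Delta_j \tilde h.
\]
At a regular point of $\tilde h$ the first term can be made to dominate by taking $\varphi$ sufficiently convex, but at a critical point it vanishes and only $\varphi'(\tilde h)\,\Delta_j \tilde h$ survives. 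So the plan is: first choose $j$ to force $\Delta_j \tilde h > 0$ at every critical point of $\tilde h$ and to take the prescribed product form near the boundary, then design $\varphi$.

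\emph{Construction of $j$.} I would prescribe the conformal class locally in three types of regions and glue by a partition of unity, using that positive-definite symmetric $(0,2)$-tensors form a convex cone. Near each boundary component, the negative gradient flow of $\tilde h$ (for any background metric) yields a collar $[-\epsilon,0]\times S^1 \hookrightarrow \Surface$ on which $\tilde h$ depends only on the first coordinate; equipping this collar with the product metric $d\tau^2 + d\theta^2$ makes $(\tau,\theta)$ conformal coordinates in which $\tilde h$ is a smooth strictly increasing function of $\tau$ alone with $\tilde h(0) = 1$, and restricting to a smaller collar $(-\delta,0]\times S^1$ supplies the holomorphic collar coordinates of the statement. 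Near each interior critical point $p$, Morse coordinates $(u,v)$ write $\tilde h - \tilde h(p) = \pm u^2 + v^2$; the exclusion of index-$2$ critical points rules out $-u^2 - v^2$, so $p$ is either a minimum or a saddle. I equip the chart with a flat metric $\alpha\,du^2 + \beta\,dv^2$: in the minimum case any $\alpha,\beta>0$ give $\Delta_j\tilde h(p) = 2/\alpha + 2/\beta > 0$, and in the saddle case the choice $\alpha > \beta$ gives $\Delta_j\tilde h(p) = 2/\beta - 2/\alpha > 0$. On the complement I take any orientation-compatible metric. After gluing (and shrinking the model neighborhoods if necessary), continuity ensures $\Delta_j \tilde h > 0$ persists on an open neighborhood of every critical point, while compactness gives a positive lower bound for $|\nabla_j \tilde h|$ on the complement of any fixed open neighborhood of the critical set.

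\emph{Construction of $\varphi$.} The collar condition forces $\varphi(\tilde h(s)) = e^s$ on $[\tilde h(-\delta),1]$; since $s \mapsto \tilde h(s)$ is a smooth diffeomorphism onto its image, this determines $\varphi = \exp\circ (\tilde h|_{\text{collar}})^{-1}$ there as a smooth, strictly increasing function. Extend $\varphi$ smoothly and monotonically to all of $\RR$ in such a way that $\varphi''/\varphi'$ is sufficiently large on the remaining range of $\tilde h$ (for instance, by interpolating onto $\varphi(t) = C_1 + C_2 e^{\lambda t}$ for a large $\lambda > 0$). Using the lower bound on $|\nabla_j \tilde h|$ away from the critical set together with $\Delta_j \tilde h > 0$ in a neighborhood of each critical point, the estimate $\varphi''(\tilde h)\,|\nabla_j \tilde h|^2 + \varphi'(\tilde h)\,\Delta_j \tilde h > 0$ then holds pointwise outside the collar; inside the collar $h(s,t) = e^s$ is manifestly strictly subharmonic.

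\emph{Main obstacle.} The one genuinely nontrivial step is the saddle case: at an index-$1$ critical point the sign of $\Delta_j \tilde h$ depends essentially on the conformal class, and the assumption that no critical point has index~$2$ is used in a non-negotiable way, since at a local maximum no conformal class could force $\Delta_j \tilde h > 0$. The remaining work---interpolating the local conformal prescriptions into a single global $j$, pinning down $\varphi$ near the collar, and extending it outside to kill the possibly wrong sign of $\Delta_j \tilde h$ at regular points---is bookkeeping once the saddle-point model is in place.
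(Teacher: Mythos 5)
Your proof is correct and follows essentially the same strategy as the paper: prescribe $j$ locally in Morse coordinates near critical points (using the exclusion of index~$2$ to get strict subharmonicity of $\tilde h$ there, with $\lambda^2 = \beta/\alpha$ translating your metric ansatz into the paper's $j(\partial_x)=\lambda\partial_y$) and in collar coordinates near the boundary, then compose with a sufficiently convex increasing $\varphi$, using the identity $\Delta_j(\varphi\circ\tilde h)=\varphi''\,|\nabla_j\tilde h|^2+\varphi'\,\Delta_j\tilde h$ (the conformal-coordinate version of the paper's $-d(dh\circ j)=(\varphi''\circ\tilde h)\mu-(\varphi'\circ\tilde h)\,d(d\tilde h\circ j)$) together with $|\nabla_j\tilde h|>0$ off the critical set. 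The paper streamlines the gluing of $\varphi$ at the collar by choosing the collar coordinates $(s,t)$ so that $\tilde h=e^s$ there, which forces $\varphi$ to be an affine function (hence with $\varphi''=0$) on the corresponding range of $\tilde h$; your version, which first pins down $\varphi=\exp\circ(\tilde h|_{\text{collar}})^{-1}$ on the collar range and then interpolates, would need a word about why the resulting $\varphi''$ can be joined smoothly to a large convex tail without losing subharmonicity on the overlap, and you should also note that the gradient-flow collar must be reparametrized so that $\tilde h$ becomes a function of the normal coordinate alone.
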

\begin{proof}
To construct $j$ with the required properties, we start by choosing
oriented coordinates $(s,t) \in (-2\delta,0] \x S^1$ on a collar neigbhorhood of
each boundary component such that $\tilde h(s,t)=e^s$ in these coordinates.
In this collar neighborhood, we simply define $j$ by requiring
$j(\p_s)=\p_t$ and $j(\p_t) = -\p_s$. Note that
$$
-d (d\tilde h \circ j) = e^s\, ds\wedge dt >0
$$
on these collars.

Next we choose oriented Morse coordinates near the
critical points, such that locally 
$$
\tilde h(x,y)= x^2 \pm y^2 + \tilde h(0).
$$
In such coordinates, we can define $j$ such that $j(\p_x)=\lambda
\p_y$ and $j(\p_y)=-\frac 1 \lambda \p_x$ for some $\lambda > 0$. 
A computation then yields
$$
-d (d\tilde h \circ j) = \left(\frac 2 \lambda \pm 2\lambda\right)\, 
dx \wedge dy,
$$
which is positive whenever $0<\lambda<1$. 

Now extend $j$ arbitrarily to all of $\Surface$ and consider the function
$h = \varphi \circ \tilde{h}$, where $\varphi : \RR \to \RR$ is a smooth
function with $\varphi' > 0$ and $\varphi'' \ge 0$.  Observe that the $2$-form
$$
\mu := - d\tilde{h} \wedge (d\tilde{h} \circ j)
$$
is everywhere nonnegative, and vanishes precisely at the critical points
of~$\tilde{h}$.  We then compute,
\begin{equation}
\label{eqn:convexity}
-d(dh \circ j) = - (\varphi' \circ \tilde{h}) \, d(d\tilde{h} \circ j) +
(\varphi'' \circ \tilde{h}) \mu.
\end{equation}
This is already positive whenever $-d(d\tilde{h} \circ j)$ is positive,
which is true on a neighborhood of the critical points and the boundary.
Outside of this neighborhood, we have $\mu > 0$ and can thus 
arrange $-d(dh \circ j) > 0$ by choosing $\varphi$ so that
$$
\frac{\varphi''}{\varphi'} \ge K
$$
for a sufficiently large constant $K > 0$.  Since $-d(d\tilde{h} \circ j) > 0$
on the collar neighborhoods $(-2\delta,0] \times S^1$ of $\p \Surface$, we are
free to set $\varphi'' = 0$ in $[-\delta,0] \times S^1$.  Now
since $-d(dh \circ j) > 0$ everywhere, \eqref{eqn:convexity} implies that
this property will survive a further postcomposition with an 
increasing affine function, hence through such a composition we can arrange
without loss of generality that $\varphi(s)=s$ on the collar neighborhoods
$[-\delta,0] \times S^1$.
\end{proof}

Let $\Surface_-$ and $\Surface_+$ denote compact oriented and possibly disconnected
surfaces, such that each connected component has non-empty boundary
and the total number of boundary components of $\Surface_-$ and $\Surface_+$ agrees.
On each of the surfaces $\Surface_\pm$, we choose a function $h_\pm$ and 
conformal structure $j_\pm$ as provided by the lemma and define a $1$-form by
$$
\beta_\pm = - d h_\pm \circ j_\pm.
$$
This induces a symplectic form $\sigma_\pm$ and Riemannian metric $g_\pm$ 
on $\Surface_\pm$, defined by
$$
\sigma_\pm = d\beta_\pm,
\qquad
g_\pm = \sigma_\pm(\cdot, j_\pm \cdot).
$$
Since $dh_\pm=e^s\,ds$ in holomorphic coordinates $(s,t) \in
(-\delta,0] \x S^1$ near each component of the boundary, we find
$$
\sigma_\pm=e^s\, ds \wedge dt, \quad \nabla h_\pm=\p_s.
$$
Denote the union of all these collar neighborhoods of $\p \Surface_\pm$
by 
$$
\uU_\pm \subset \Surface_\pm.
$$
The gradient $\nabla h_\pm$ 
is a Liouville vector field pointing orthogonally outward at $\p \Surface_\pm$.
\begin{remark}
\label{remark:MorseSmale}
Since the subharmonicity condition on the pair $(h_\pm,j_\pm)$ is
open, there is some freedom in the construction. In particular, by 
perturbing the conformal structure if necessary we can achieve that
the flow of $\nabla h_\pm$ is Morse-Smale. 
\end{remark}

We now glue $\Surface_+$ and $\Surface_-$ together along an orientation preserving
diffeomorphism $\p \Surface_+ \to \p \Surface_-$ to create a closed oriented
surface
$$
\Surface = \Surface_+ \cup (-\Surface_-),
$$
divided into two halves by a special set of circles
$\Gamma := \p \Surface_+ \subset \Surface$.  We will always assume~$\Surface$ is connected,
and as the above notation suggests we assign it the same
orientation as~$\Surface_+$, which is opposite the given orientation on~$\Surface_-$.
On each connected component of $\uU_+$ and $\uU_-$,
one can define new coordinates
\begin{equation*}
\begin{split}
S^1 \times [0,\delta) \ni (\theta,\rho) := (t,-s) & \text{ for $(s,t)
  \in \uU_+$},\\ 
S^1 \times (-\delta,0] \ni (\theta,\rho) := (t,s) & \text{ for $(s,t)
  \in \uU_-$}, 
\end{split}
\end{equation*}
and then define the gluing map and the smooth structure on~$\Surface$ so that 
each component of $\uU := \uU_+ \cup \uU_- \subset \Surface$
inherits smooth positively oriented
coordinates $(\theta,\rho) \in S^1 \times (-\delta,\delta)$.

Choose a function $g_0:[-\delta,\delta] \to \RR$ with
$g_0(\rho)= \pm 1$ for $\rho$ near $\pm \delta$, $g_0(0)=0$,
$g_0' \geq 0$ and $g_0' >0$ near $\rho=0$ and a function
$\gamma:[-\delta,\delta] \to \RR$ with $\gamma(\rho) = \mp 
e^{\mp \rho}$ for $\rho$ near $\pm \delta$, $ \gamma'>0$ wherever
$g_0'=0$, $\gamma(\rho) > 0$ for $\rho < 0$ and
$\gamma(\rho) < 0$ for $\rho > 0$.
For $\epsilon \in(0,1)$, we then set  
$$
g_\epsilon(\rho) = g_0(\rho) + \epsilon^2 \gamma(\rho),
$$
which  satisfies
\begin{itemize}
\item $g_\epsilon' > 0$ for sufficiently small $\epsilon>0$,
\item $g_\epsilon(\rho) = \pm (1 - \epsilon^2 e^{\mp \rho})$ for 
$\rho$ near~$\pm\delta$,
\item $g_\epsilon(0) = 0$.
\end{itemize}
Now define a smooth family of functions
$h_\epsilon : \Surface \to \RR$ by
$$
h_\epsilon = \begin{cases}
1 - \epsilon^2 h_+ & \text{ on $\Surface_+ \setminus \uU_+$,} \\
g_\epsilon(\rho) & \text{ for $(\theta,\rho) \in \uU$,} \\
-1 + \epsilon^2 h_- & \text{ on $\Surface_- \setminus \uU_-$.} \\
\end{cases}
$$

For each fixed $\epsilon > 0$, $h_\epsilon$ is a Morse
function with all its critical points in $\Surface\setminus \uU$, and they 
are precisely the critical points of~$h_\pm$.  

Next choose a function
$f_0: [-\delta,\delta] \to \RR$ such that $f_0(\rho) = 0$ for $\rho$
near $\pm \delta$, $f_0 \geq 0$ everywhere and $\rho \cdot f_0'(\rho)
\leq 0$ for $\rho \neq 0$ and $f_0''(0)<0$, and a function
$\psi:[-\delta,\delta] \to \RR$ with $\psi(\rho) = e^{\pm \rho}$ for
$\rho$ near $\mp \delta$, $\psi \geq 0$ everywhere and $\rho \cdot
\psi'(\rho) <0$ for $\rho \neq 0$.  
Then we define 
$$
f_\epsilon(\rho) = f_0(\rho) + \epsilon \psi(\rho).
$$

With these choices in place, we denote the coordinate in $S^1$ by $\phi$ and
define a smooth family of $1$-forms 
$\lambda_\epsilon$ on $S^1 \times \Surface$ by
\begin{equation}
\label{eqn:lambdaEpsilon}
\lambda_\epsilon = 
\begin{cases}
\epsilon \beta_+ + h_\epsilon\ d\phi & \text{ on $S^1 \times (\Surface_+ \setminus \uU_+)$,}\\
f_\epsilon(\rho)\ d\theta + g_\epsilon(\rho)\ d\phi & \text{ on $S^1 \times \uU$,} \\
\epsilon \beta_- + h_\epsilon\ d\phi & \text{ on $S^1 \times (\Surface_- \setminus \uU_-)$.}
\end{cases}
\end{equation}
Observe that $S^1 \times \Surface$ admits a natural summed open book with empty
binding, interface $\iI = S^1 \times \Gamma$, fibration
$$
\pi : S^1 \times (\Surface \setminus \Gamma) \to S^1 : (\phi,z) \mapsto
\begin{cases}
\phi & \text{ if $z \in \Surface_+$},\\
-\phi & \text{ if $z \in \Surface_-$},
\end{cases}
$$
and the meridians on $S^1 \times \Gamma$ generated by the circles
$S^1 \times \{\text{const}\}$.

\begin{prop}
\label{prop:contactForm}
There exists $\epsilon_0 > 0$ with the following properties.
\begin{enumerate}[(i)]
\item For any $\epsilon \in (0,\epsilon_0]$, $\lambda_\epsilon$ is a
  positive contact form on $S^1 \times \Surface$ and is a Giroux form for the
  summed open book described above. Moreover, for all these contact forms each
component of the interface $S^1 \times \Gamma$ is a Morse-Bott submanifold
of Reeb orbits pointing in the $\p_\theta$-direction.
\item  For any $\epsilon \in (0,\epsilon_0]$ and for each
$\phi \in S^1$, the leaves of the characteristic foliation on 
$\{\phi\} \times \Surface$ are precisely the gradient flow lines
of~$h_\epsilon$.
\item The $2$-form $\omega = d(e^s \lambda_s)$ is 
symplectic on $(0,\epsilon_0] \times S^1 \times \Surface$, where~$s$
denotes the coordinate on the first factor.
\end{enumerate}

\end{prop}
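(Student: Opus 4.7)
The plan is to verify all three claims by direct computation on each of the three regions where $\lambda_\epsilon$ is given by a distinct formula in~\eqref{eqn:lambdaEpsilon}. Throughout, I would make use of the identity $dh_\pm \wedge \beta_\pm = |\nabla h_\pm|^2\, d\mathrm{vol}_{\Surface_\pm}$, which follows immediately from $\beta_\pm = -dh_\pm \circ j_\pm$ together with $g_\pm = \sigma_\pm(\cdot, j_\pm \cdot)$.

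For (i), on $S^1 \times (\Surface_+ \setminus \uU_+)$ a direct computation based on this identity would yield
\begin{equation*}
\lambda_\epsilon \wedge d\lambda_\epsilon \;=\; \epsilon\bigl(1 - \epsilon^2(h_+ + |\nabla h_+|^2)\bigr)\, d\phi \wedge \sigma_+,
\end{equation*}
which is positive for sufficiently small $\epsilon$ by compactness of $\Surface_+$; the $\Surface_-$ case is analogous. On $S^1 \times \uU$ the analogous calculation gives
\begin{equation*}
\lambda_\epsilon \wedge d\lambda_\epsilon \;=\; \bigl(f_\epsilon g_\epsilon' - g_\epsilon f_\epsilon'\bigr)\, d\phi \wedge d\theta \wedge d\rho,
\end{equation*}
and I would argue positivity using the hypotheses $g_0' \geq 0$ with $\gamma'(\rho) > 0$ wherever $g_0'(\rho) = 0$ (which force $g_\epsilon' > 0$ throughout $(-\delta,\delta)$ for small $\epsilon$), combined with $f_\epsilon \geq \epsilon\psi > 0$ and the sign conditions $\rho f_\epsilon'(\rho) \leq 0$ and $\rho g_\epsilon(\rho) > 0$ for $\rho \neq 0$, to conclude $f_\epsilon g_\epsilon' > 0$ and $-g_\epsilon f_\epsilon' \geq 0$. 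Next I would compute $X_{\lambda_\epsilon}|_\uU = (f_\epsilon g_\epsilon' - g_\epsilon f_\epsilon')^{-1}(g_\epsilon'\partial_\theta - f_\epsilon'\partial_\phi)$, which at $\rho = 0$ (using $f_\epsilon'(0) = 0 = g_\epsilon(0)$) reduces to $f_\epsilon(0)^{-1}\partial_\theta$; this produces the Morse-Bott structure on $\{\rho = 0\}$ and identifies the characteristic foliation $\ker(\epsilon\psi(0)\,d\theta) \cap T\{\rho=0\}$ as the meridional $\partial_\phi$-direction, as required for a Giroux form. For the Reeb field on $\Surface_\pm \setminus \uU_\pm$, the ansatz $X = a\partial_\phi + V$ with $V$ tangent to $\Surface_\pm$ together with $i_X d\lambda_\epsilon = 0$ and $\lambda_\epsilon(X) = 1$ forces $V$ proportional to $j_\pm \nabla h_\pm$ and $a$ close to $\pm 1$ (with the sign matching the fibration $\pi = \pm\phi$ on $\Surface_\pm$), giving positive transversality to the pages. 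Claim (ii) would then follow immediately since on $\{\phi\}\times(\Surface_+ \setminus \uU_+)$ the restriction $\lambda_\epsilon|_\Surface = \epsilon\beta_+ = -\epsilon\, dh_+ \circ j_+$ has kernel spanned by $\nabla h_+$, while on $\uU$ the kernel of $f_\epsilon(\rho)\,d\theta$ is $\partial_\rho$, which is the gradient direction of $h_\epsilon|_\uU = g_\epsilon(\rho)$.

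For (iii), I would write $\tilde\lambda$ for $\lambda_s$ regarded as a 1-form on $(0,\epsilon_0]\times S^1\times \Surface$ with vanishing $ds$-component; using $(d_M \lambda_s)^2 = 0$ on a $3$-manifold one obtains $\omega^2 = 2 e^{2s}\, ds \wedge (\lambda_s + \partial_s\lambda_s) \wedge d_M \lambda_s$, reducing the problem to positivity of $(\lambda_s + \partial_s\lambda_s)\wedge d_M\lambda_s$ as a volume form on $M$. On $S^1\times(\Surface_+\setminus \uU_+)$ this would work out to
\begin{equation*}
\epsilon\bigl[1 - 2\epsilon h_+ - \epsilon|\nabla h_+|^2 - \epsilon^2(h_+ + |\nabla h_+|^2)\bigr]\,d\phi \wedge \sigma_+,
\end{equation*}
positive for small $\epsilon$, and on $\uU$ one obtains
\begin{equation*}
\bigl[(f_\epsilon + \psi)g_\epsilon' - (g_\epsilon + 2\epsilon\gamma)f_\epsilon'\bigr]\,d\phi\wedge d\theta\wedge d\rho,
\end{equation*}
which at $\rho = 0$ evaluates to $(1+\epsilon)\psi(0)g_\epsilon'(0) > 0$.

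The hard part will be the delicate $\epsilon$-analysis of the $\uU$ expression for (iii) when $\rho \neq 0$: near $\rho = 0$ the second summand $(g_\epsilon + 2\epsilon\gamma)f_\epsilon'$ can in principle contribute with the wrong sign (since $\gamma(0) < 0$ while $g_\epsilon$ is small there), and one would need to exploit that $f_0'(0) = \psi'(0) = 0$ forces $f_\epsilon'(\rho) = O\bigl((1+\epsilon)\rho\bigr)$ near the origin, together with the uniform lower bound $(f_\epsilon + \psi)g_\epsilon' \geq \psi\,g_\epsilon' > 0$, to conclude dominance of the first summand for small $\epsilon$. A similar balancing is required near $\rho = \pm\delta$, where $g_0$ is locally constant and positivity rests entirely on the $\epsilon^2\gamma$ correction to $g_\epsilon$; at $\rho$ near $+\delta$, for instance, an explicit calculation reduces the bracketed expression to $\epsilon e^{-\rho}(1 - \epsilon e^{-\rho})$, positive for small $\epsilon$. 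Taking $\epsilon_0$ as the minimum of the thresholds arising from these computations on each region would then complete the proof.
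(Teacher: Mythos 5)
Your approach -- direct computation of $\lambda_\epsilon\wedge d\lambda_\epsilon$ and $\omega\wedge\omega$ on each of the three regions -- is essentially the one taken in the paper, and the main formulas check out, including the useful reduction $\omega^2 = 2e^{2s}\,ds\wedge(\lambda_s+\partial_s\lambda_s)\wedge d_M\lambda_s$ and the bracketed $\uU$-expression $(f_\epsilon+\psi)g_\epsilon'-(g_\epsilon+2\epsilon\gamma)f_\epsilon'$. A few corrections, though. On $S^1\times(\Surface_\pm\setminus\uU_\pm)$ the identity $dh_\pm\wedge\beta_\pm = |\nabla h_\pm|^2\sigma_\pm$ actually yields $\lambda_\epsilon\wedge d\lambda_\epsilon = \epsilon\bigl(1+\epsilon^2(|\nabla h_+|^2 - h_+)\bigr)\,d\phi\wedge\sigma_+$, so you have the wrong sign on $|\nabla h_+|^2$ (this matches the coefficient appearing in \eqref{eqn:Reeb2}), and the analogous sign flip appears in your part~(iii) expression; neither affects positivity for small $\epsilon$. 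More significantly, the claim $\gamma(0)<0$ is false: the stated conditions $\gamma>0$ for $\rho<0$ and $\gamma<0$ for $\rho>0$ force $\gamma(0)=0$. Since $f_\epsilon'(0)=g_\epsilon(0)=\gamma(0)=0$, the summand $(g_\epsilon+2\epsilon\gamma)f_\epsilon'$ vanishes to second order at $\rho=0$, whereas $(f_\epsilon+\psi)g_\epsilon'$ has the strictly positive value $(f_0(0)+(1+\epsilon)\psi(0))g_\epsilon'(0)$ there (you dropped $f_0(0)$, which is positive by $f_0''(0)<0$ and $f_0\ge 0$), so the dominance argument is actually easier than you feared. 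You should also cover intermediate $\rho$ where $g_0'$ may vanish away from $\pm\delta$: there the hypothesis $\gamma'>0$ on $\{g_0'=0\}$ makes the first summand $O(\epsilon^2)$ positive, while for $f_0'=0$ the second reduces to $-g_0\cdot\epsilon\psi'+O(\epsilon^2)$, which is positive since $\rho g_0>0$ and $\rho\psi'<0$. Finally, your caution here is warranted: the paper's proof asserts that all four of $f_sg_s'$, $-f_s'g_s$, $\psi g_s'$, $-2\gamma s f_s'$ are nonnegative, but the last one is in fact $\le 0$ (since $\gamma$ and $f_s'$ share the sign $-\operatorname{sgn}\rho$, hence $\gamma f_s'\ge 0$), so a case analysis of the kind you sketch, rather than termwise positivity, is what actually establishes the result.
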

\begin{proof}
To prove (i), note that the natural co-orientation induced by
the summed open book on its pages is compatible with the
orientations defined on $\Surface_\pm$ by~$j_\pm$, for which $\sigma_\pm$ are positive
volume forms.  To prove the contact condition on $S^1 \times (\Surface_\pm
\setminus \uU_\pm)$, observe that $\lambda_\epsilon \to \pm d\phi$ on
this region as $\epsilon \to 0$, so the contact planes are almost
tangent to the pages.  Thus it suffices to observe that
$d\lambda_\epsilon$ is positive on $\Surface_\pm \setminus \uU_\pm$, 
which is clear since $d\lambda_\epsilon = \epsilon \sigma_\pm$  
when restricted to the pages.

On $S^1 \times \uU$, a routine computation shows that the contact condition 
follows from $f_\epsilon g_\epsilon' - f_\epsilon' g_\epsilon >
0$.  But this is easily computed to equal
$$
f_\epsilon g_\epsilon' - f_\epsilon' g_\epsilon = 
f_0 g_0'-f_0'g_0 + \epsilon (\psi g_0' - \psi' g_0) + \Order(\epsilon^2). 
$$
Our conditions on the various functions ensure that all four summands
are nonnegative, with the first one strictly positive for $\rho$ near 0 and the
last one strictly positive for $\rho$ away from zero. So for $\epsilon_0>0$
suffficiently small, the contact condition holds for all $\epsilon \in
(0,\epsilon_0]$ on $S^1 \times \uU$ as well.
Here it is also easy to compute the Reeb
vector field $X_{\lambda_\epsilon}$: writing 
$D_\epsilon = f_\epsilon g_\epsilon' - f_\epsilon' g_\epsilon$, we have
\begin{equation}
\label{eqn:Reeb1}
X_{\lambda_\epsilon}(\phi,\rho,\theta) = \frac{1}{D_\epsilon(\rho)} \left[
g_\epsilon'(\rho)\ \frac{\p}{\p \theta} - f_\epsilon'(\rho)\ \frac{\p}{\p \phi} \right].
\end{equation}
Our assumptions on $f_\epsilon'(\rho)$ then imply that $X_{\lambda_\epsilon}$ always
has a component in the negative $\p_\phi$-direction for $\rho \in (-\delta,0)$,
and in the positive $\p_\phi$-direction for $\rho \in (0,\delta)$, while at
$\rho=0$ it points in the $\p_\theta$-direction.  Moreover the condition
$g_\epsilon(0)=0$ implies that the contact planes at $\rho=0$ are tangent to the
circles $S^1 \times \{\text{const}\}$, thus
$\lambda_\epsilon$ is a Giroux form.  The Morse-Bott condition at
$S^1 \times \Gamma$ follows from $f_\epsilon''(0) < 0$, which
  for small $\epsilon>0$ follows from $f_0''(0)<0$. This concludes
the proof of (i).

Next we verify that the characteristic foliation on $\{\phi\} \times \Surface$
matches the gradient flow of~$h_\epsilon$.  This is obvious in
$\uU$, where both characteristic leaves and gradient flow lines are simply
straight lines in the $\p_\rho$-direction.  On $\Surface_\pm \setminus \uU_\pm$,
a vector $v \in T \Surface_\pm$ is tangent to the characteristic foliation if and
only if $\beta_\pm(v) = 0$, implying $dh_\pm (j_\pm v) = 0$ and thus
$v$ is orthogonal to the level sets of $h_\pm$, which makes it proportional
to $\nabla h_\pm$ as claimed, and establishes (ii).

Finally, consider the two-form $\omega = d(e^s \lambda_s)$. On
$\RR \times S^1 \times \uU$, we have $\lambda_s = f_s \,
d\theta + g_s \, d\phi$ and so 
$$
\omega = e^s(ds \wedge \lambda_s + df_s \wedge d\theta + dg_s
\wedge d\phi),
$$ 
with
\begin{align*}
df_s &= f_s' \, d\rho + \psi \, ds\\
dg_s &= g_s' \, d\rho + 2 s \gamma \, ds.
\end{align*}
One then computes 
$$
\omega \wedge \omega = e^s(f_s g_s' - f_s'g_s + \psi g_s' - 2\gamma s
f_s') \,  ds \wedge d\theta \wedge d\rho \wedge d\phi
$$
here, and observe that all four terms are nonnegative, with the first
one strictly positive for small $s>0$, so $\omega$ is symplectic here.

On $\RR \times S^1 \times (\Surface_+ \setminus \uU)$, we have
$\lambda_s = s \beta_+ + (1- s^2h_+) \, d\phi$, and so another
computation shows
$$
\omega \wedge \omega = e^{2s}(s\, \sigma_+\wedge ds \wedge d\phi +
\Order(s^2))
$$
here, which is also a positive volume form for small enough $s>0$. A similar
computation on $\RR \times S^1 \times (\Surface_- \setminus \uU)$
finishes the proof of part (iii).

\end{proof}

From now on, denote the contact structure on $S^1 \times \Surface$ for
$\epsilon \in (0,\epsilon_0]$ by
$$
\xi_\epsilon = \ker\lambda_\epsilon.
$$
Due to Gray's stability theorem, $\xi_\epsilon$ is independent of 
$\epsilon$ up to isotopy, and it is isomorphic to $\xi_\Gamma$.

\begin{remark}
\label{remark:Euler}
From the discussion above it is clear that for every $\phi \in S^1$,
$\{\phi\} \times \Surface$ is a convex surface for
$\xi_\epsilon$ with dividing set $\Gamma$, positive part $\Surface_+$ and
negative part $\Surface_-$. In particular, the Euler class
$e(\xi_\epsilon) \in H^2(S^1 \times \Surface)$ satisfies
$\langle e(\xi_\epsilon),[\{*\} \times \Surface] \rangle = \chi(\Surface_+)-\chi(\Surface_-)$.
It follows from the $S^1$-invariance of $\xi_\epsilon$ that the Euler
class vanishes on all cycles of the form $S^1 \x \gamma$ for
closed curves $\gamma \subset \Surface$. Thus
$$
e(\xi_\epsilon)=\left[ \chi(\Surface_+) - \chi(\Surface_-) \right] 
\PD[S^1 \times \{*\}].
$$
\end{remark}

The following assertion can be checked by a routine computation.
\begin{lemma}
The Reeb vector field $X_{\lambda_\epsilon}$ on 
$S^1 \times (\Surface_\pm \setminus \uU_\pm)$ is given by
\begin{equation}
\label{eqn:Reeb2}
X_{\lambda_\epsilon} = \frac{1}{1 + \epsilon^2 \left( | \nabla h_\pm |_{g_\pm}^2
 - h_\pm \right)} \left( \pm \frac{\p}{\p \phi} + 
 \epsilon j_\pm \nabla h_\pm \right). \qquad \qed
\end{equation}
\end{lemma}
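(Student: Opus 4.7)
The plan is a direct computation using the defining equations $\iota_{X_{\lambda_\epsilon}} d\lambda_\epsilon = 0$ and $\lambda_\epsilon(X_{\lambda_\epsilon}) = 1$. Since every ingredient of $\lambda_\epsilon$ on $S^1 \times (\Surface_\pm \setminus \uU_\pm)$ is invariant under the $S^1$-action generated by $\p_\phi$, so is $X_{\lambda_\epsilon}$, and I would pose the ansatz $X_{\lambda_\epsilon} = a\,\p_\phi + V$ with $a \in C^\infty(\Surface_\pm)$ and $V$ a vector field tangent to $\Surface_\pm$. Combining $h_\epsilon = \pm(1 - \epsilon^2 h_\pm)$ on this region with $\lambda_\epsilon = \epsilon \beta_\pm + h_\epsilon\, d\phi$ gives
$$
d\lambda_\epsilon = \epsilon\,\sigma_\pm \mp \epsilon^2\,dh_\pm \wedge d\phi.
$$

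Plugging the ansatz into $\iota_X d\lambda_\epsilon = 0$ and separating the $d\phi$-component from the horizontal one reduces the kernel condition to
$$
dh_\pm(V) = 0 \qquad \text{and} \qquad \iota_V \sigma_\pm = \mp \epsilon\,a\,dh_\pm.
$$
The second equation is solved by invoking the key identity $\iota_{j_\pm \nabla h_\pm}\sigma_\pm = -dh_\pm$, which follows at once from the compatibility $g_\pm = \sigma_\pm(\cdot, j_\pm \cdot)$ and simply records that $j_\pm \nabla h_\pm$ is the Hamiltonian vector field of $-h_\pm$. This forces $V = \pm \epsilon\,a\,j_\pm \nabla h_\pm$, and the first equation then holds automatically from $g_\pm(\nabla h_\pm, j_\pm \nabla h_\pm) = 0$.

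The scalar $a$ is fixed by the normalization $\lambda_\epsilon(X) = 1$. Using $\beta_\pm(j_\pm \nabla h_\pm) = -dh_\pm(j_\pm^2 \nabla h_\pm) = |\nabla h_\pm|_{g_\pm}^2$ and the above expression for $h_\epsilon$, this reduces to
$$
\pm a \bigl(1 + \epsilon^2 (|\nabla h_\pm|_{g_\pm}^2 - h_\pm)\bigr) = 1,
$$
whence $a = \pm 1 / \bigl(1 + \epsilon^2(|\nabla h_\pm|_{g_\pm}^2 - h_\pm)\bigr)$. The two outer $\pm$ signs in $V = \pm \epsilon a\,j_\pm \nabla h_\pm$ cancel, while the sign from $a$ survives in front of $\p_\phi$, recovering formula~\eqref{eqn:Reeb2}. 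There is no genuine obstacle here: the whole content of the lemma is packaged in the single identity $\iota_{j_\pm \nabla h_\pm} \sigma_\pm = -dh_\pm$, and what remains is sign bookkeeping forced by $\Surface_+$ and $\Surface_-$ carrying opposite orientations and the corresponding values of $h_\epsilon$ differing by a sign.
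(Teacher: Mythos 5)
Your computation is correct and is precisely the routine verification the paper leaves to the reader (the lemma is stated with a $\qed$ and the paper remarks only that it ``can be checked by a routine computation''). The $S^1$-invariant ansatz, the identity $\iota_{j_\pm\nabla h_\pm}\sigma_\pm = -dh_\pm$, and the sign bookkeeping from $h_\epsilon = \pm(1-\epsilon^2 h_\pm)$ are exactly what is needed, and the signs do cancel as you claim.
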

In particular, this shows that every critical point $z \in \Crit(h_\epsilon)$
gives rise to a periodic orbit
$$
\gamma_z := S^1 \times \{ z \}
$$
of~$X_{\lambda_\epsilon}$.  We shall denote by $\gamma^n_z$ the
$n$-fold cover of $\gamma_z$ for any $n \in \NN$ and $z \in \Crit(h_\epsilon)$.
Observe that there is always a natural
trivialization of the contact bundle along $\gamma_z^n$, defined
by choosing any frame at a point and transporting by the $S^1$-action.

We next define a compatible complex
structure $J_\epsilon$ on $\xi_\epsilon$ as follows.  On 
$S^1 \times (\Surface_\pm \setminus \Gamma)$,
the projection $S^1 \times \Surface \to \Surface$ defines a bundle isomorphism
$$
\pi_\Surface : \xi_\epsilon|_{S^1 \times (\Surface \setminus \Gamma)} \to T \Surface|_{S^1 \times
(\Surface \setminus \Gamma)},
$$
which we can use to define $J_\epsilon : \xi_\epsilon \to \xi_\epsilon$ on
$S^1 \times (\Surface_\pm \setminus \uU_\pm)$ by
\begin{equation}
\label{eqn:projection}
J_\epsilon = \pi_\Surface^*j_\pm.
\end{equation}
Since $\p_\rho \in \xi_\epsilon$ on $S^1 \times \uU$, we can now extend
$J_\epsilon$ to this region by setting
$$
J_\epsilon \p_\rho = \alpha_\epsilon(\rho)[f_\epsilon(\rho)\p_\phi - g_\epsilon(\rho)\p_\theta],
$$
for any smooth family of functions
$\alpha_\epsilon : (-\delta,\delta) \to (0,\infty)$ 
which equals $\pm 1/g_\epsilon$ near $\rho=\pm \delta$, so in particular
for $\epsilon > 0$, $J_\epsilon$ satisfies
$$
d\rho(J_\epsilon\p_\rho) = 0 \quad\text{ and }\quad 
d\lambda_\epsilon(\p_\rho,J_\epsilon\p_\rho) > 0.
$$
Extend $J_\epsilon$ to an $\RR$-invariant almost complex structure
$$
J_\epsilon : T\left(\RR \times (S^1 \times \Surface)\right) \to 
T\left(\RR\times (S^1 \times \Surface)\right)
$$
in the standard way, i.e.~by setting $J_\epsilon \p_s = X_{\lambda_\epsilon}$ where
$s$ is the $\RR$-coordinate.
Then for each $z \in \Crit(h_\epsilon)$, there is a \emph{trivial cylinder}
$$
\RR \times S^1 \to \RR \times (S^1 \times \Surface) :
(s,t) \mapsto (s,t,z),
$$
which can be reparametrized to define an embedded $J_\epsilon$-holomorphic curve
of Fredholm index~$0$.  We shall abbreviate this curve by
$\RR\times \gamma_z$, and similarly write $\RR\times \gamma_z^n$ for the
obvious $J_\epsilon$-holomorphic $n$-fold cover of $\RR \times \gamma_z$.

\begin{prop}
\label{prop:Jcomplex}
For $\epsilon \in (0,\epsilon_0]$,
suppose $x : \RR \to \Surface$ is a solution to the gradient flow equation
$\dot{x} = \nabla h_\epsilon(x)$ approaching $z_\pm \in \Crit(h_\epsilon)$
at~$\pm\infty$.  Then there exists a proper function $a : \RR \to \RR$, unique
up to a constant, such that the embedding
$$
u_x : \RR \times S^1 \to \RR \times (S^1 \times \Surface) 
: (s,t) \mapsto (a(s),t,x(s))
$$
is a $J_\epsilon$-complex curve.  Both ends of $u$ are positive if and only if
the two critical points $z_+$ and $z_-$ lie on opposite sides of the
interface.
\end{prop}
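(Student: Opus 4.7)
The plan is to verify the Cauchy-Riemann equation $J_\epsilon\circ du_x = du_x\circ j$ for the ansatz $u_x(s,t) = (a(s),t,x(s))$, and thereby determine $a$ uniquely up to an additive constant. Since $\partial_s u_x = a'(s)\partial_s + \dot x(s)$ (with $\dot x$ viewed as a horizontal vector in $T(S^1\times\Sigma)$) and $\partial_t u_x = \partial_\phi$, the equation becomes $J_\epsilon(a'(s)\partial_s + \dot x(s)) = \partial_\phi$, which I would analyze by decomposing tangent vectors along $\RR\partial_s\oplus\RR X_{\lambda_\epsilon}\oplus\xi_\epsilon$, using $J_\epsilon\partial_s = X_{\lambda_\epsilon}$ together with the explicit descriptions $J_\epsilon|_{\xi_\epsilon} = \pi_\Sigma^*j_\pm$ on $\Sigma_\pm\setminus\uU_\pm$ and $J_\epsilon\partial_\rho = \alpha_\epsilon(f_\epsilon\partial_\phi - g_\epsilon\partial_\theta)$ on $\uU$.

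The central algebraic input is the identity
\[
\beta_\pm(\nabla h_\pm) = -dh_\pm(j_\pm\nabla h_\pm) = -g_\pm(\nabla h_\pm, j_\pm\nabla h_\pm) = 0,
\]
which implies that the horizontal lift of $\dot x$ annihilates $\lambda_\epsilon$ and therefore already lies in $\xi_\epsilon$. Hence $J_\epsilon\dot x$ is obtained directly by applying $j_\pm$ horizontally and fixing the $\partial_\phi$-component from $\lambda_\epsilon(J_\epsilon\dot x) = 0$; the analogous reduction works on $\uU$, where $\dot x$ is a multiple of $\partial_\rho$. Projecting the equation $J_\epsilon(a'\partial_s + \dot x) = \partial_\phi$ onto each of the three summands of the tangent bundle then yields a first-order ODE $a'(s) = F(x(s))$ for a smooth function $F:\Sigma\to\RR$; the simultaneous compatibility of the projections onto the $\partial_\phi$- and horizontal $T\Sigma$-components is forced by the geometric content of Proposition~\ref{prop:contactForm}(ii), namely that the characteristic foliation leaves are precisely the gradient flow lines of $h_\epsilon$. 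Integration then yields $a(s) = \int_0^s F(x(\sigma))\,d\sigma$ plus an additive constant, proving existence and uniqueness of $a$ up to a constant.

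To establish properness and the positivity of the ends, I note that $F$ is strictly positive on $\Sigma_+$, strictly negative on $\Sigma_-$, vanishes only along $\Gamma$, and $|F(z)|$ equals the period of the Reeb orbit $\gamma_z$ at any critical point $z$. Since $x(s)\to z_\pm$ exponentially as $s\to\pm\infty$ by linearization at the hyperbolic critical points of the (Morse-Smale) flow, the integral defining $a$ diverges linearly at each end, so $a$ is proper. The end at $s=+\infty$ is a positive puncture (meaning $a\to+\infty$) precisely when $x(s)\in\Sigma_+$ for large $s$, i.e.\ $z_+\in\Sigma_+$; likewise the end at $s=-\infty$ is positive precisely when $x(s)\in\Sigma_-$ for $s\ll 0$ (so that $F<0$ there forces $a\to+\infty$ as $s\to-\infty$), i.e.\ $z_-\in\Sigma_-$. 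Thus both ends are positive exactly when $z_+$ and $z_-$ lie on opposite sides of $\Gamma$. The delicate technical point I expect is the compatibility of the three component equations obtained from the Cauchy-Riemann splitting, together with their smooth matching across the transition region $\uU$; this may implicitly require a harmless speed reparametrization of $x$ along its gradient trajectory, which the statement leaves tacit.
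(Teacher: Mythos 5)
Your approach contains a genuine gap. You try to verify the Cauchy--Riemann equation $J_\epsilon(a'\partial_s + \dot x) = \partial_\phi$ with the \emph{standard} complex structure $j\partial_s = \partial_t$ on $\RR\times S^1$ and with $\dot x = \nabla h_\epsilon$ fixed to unit-speed gradient flow. This is an overdetermined system: after writing $\partial_\phi = c\,X_{\lambda_\epsilon} + d\,v$ with $v := J_\epsilon\nabla h_\epsilon \in\xi_\epsilon$, the $X_{\lambda_\epsilon}$-component forces $a' = c$ while the $\xi_\epsilon$-component forces $J_\epsilon\nabla h_\epsilon = d\,v$, i.e.\ $d = 1$. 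One can check explicitly on $\Sigma_+\setminus\uU_+$, using $J_\epsilon = \pi_\Sigma^*j_+$, $\lambda_\epsilon = \epsilon\beta_+ + h_\epsilon\,d\phi$, and the Reeb field from \eqref{eqn:Reeb2}, that $c = h_\epsilon$ and $d = h_\epsilon/(\epsilon K)$ where $K = 1 + \epsilon^2(|\nabla h_+|^2 - h_+)$; since $d\neq 1$ generically, the two projections give \emph{different} values of $a'$ and the system is inconsistent. Your assertion that ``the simultaneous compatibility of the projections \ldots is forced by the geometric content of Proposition~\ref{prop:contactForm}(ii)'' is therefore false: Proposition~\ref{prop:contactForm}(ii) only tells you $\nabla h_\epsilon\in\xi_\epsilon$; it does not imply $J_\epsilon\nabla h_\epsilon$ equals the $\xi_\epsilon$-part of $\partial_\phi$ on the nose. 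Your closing hedge about a ``harmless speed reparametrization'' is the right instinct, but it directly contradicts the earlier compatibility claim, and you never carry it out; as written the argument does not establish existence of $a$.

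The resolution is that ``$J_\epsilon$-complex curve'' in the proposition refers to a condition on the \emph{image} -- that its tangent planes be $J_\epsilon$-invariant -- not that $u_x$ be holomorphic with a preassigned domain complex structure. This is a strictly weaker condition: instead of $J_\epsilon(a'\partial_s + \dot x) = \partial_\phi$, you only need $J_\epsilon(a'\partial_s + \dot x)$ to lie in $\Span\{a'\partial_s + \dot x,\;\partial_\phi\}$, which reduces to the single scalar equation $a' = c/d = \epsilon K$ (on $\Sigma_+\setminus\uU_+$) and removes the overdetermination; integrating this determines $a$ uniquely up to a constant, and the parametrization of $x$ is irrelevant. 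The paper avoids the issue from the start by instead showing $\partial_\phi\in\RR X_{\lambda_\epsilon}\oplus\RR v$ and hence $J_\epsilon\partial_\phi\in\RR\partial_s\oplus\RR\nabla h_\epsilon$, then integrating the distribution $\RR\partial_\phi\oplus\RR J_\epsilon\partial_\phi$: its integral leaves automatically have $J_\epsilon$-invariant tangent planes and project to gradient trajectories of $h_\epsilon$, from which the form $u_x(s,t) = (a(s),t,x(s))$ is read off without ever writing a Cauchy--Riemann equation. You should either adopt this distribution-integration argument, or repair your computation by (i) replacing the CR equation by the image-level tangency condition, and (ii) deducing the ODE $a' = c/d$ rather than $a' = c$. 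The remainder of your argument -- properness of $a$ from the sign and nondegeneracy of the integrand near the critical points, and determination of the signs of the ends from which side of $\Gamma$ each $z_\pm$ lies on -- then goes through.
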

\begin{proof}
For any $z \in \Surface$, regard $\nabla h_\epsilon(z)$ 
as a vector in $T_{(\phi,z)}(S^1 \times \Surface)$ for some fixed $\phi \in S^1$, 
and observe that $\nabla h_\epsilon(z) \in (\xi_\epsilon)_z$ due to 
Prop.~\ref{prop:contactForm}.  Thus we can define an $S^1$-invariant vector 
field
$$
v(\phi,z) = J_\epsilon \nabla h_\epsilon(z),
$$
which takes values in $\xi_\epsilon$ and vanishes only at 
$S^1 \times \Crit(h_\epsilon)$.  For $z \in \Surface_\pm \setminus \uU_\pm$,
\eqref{eqn:projection} implies that $v(\phi,z)$ is a linear
combination of $j_\pm\nabla h_\epsilon(z)$ and $\p_\phi$, and the same
is true for $z \in \uU$ due to the condition $d\rho(J_\epsilon\p_\rho) = 0$.
By \eqref{eqn:Reeb1} and \eqref{eqn:Reeb2}, the Reeb vector field 
$X_{\lambda_\epsilon}$ is also a linear combination of the same two
vector fields everywhere, and is of course linearly independent of~$v$
except when the latter vanishes, from which we conclude
$$
\p_\phi \in \RR X_{\lambda_\epsilon} \oplus \RR v
$$
everywhere on $S^1 \times \Surface$.  It follows that $J_\epsilon\p_\phi$ is everywhere
a linear combination of $\p_s$ and $\nabla h_\epsilon$, so the desired
complex curves are obtained by integrating the distribution
$$
\RR \p_\phi \oplus \RR J_\epsilon\p_\phi.
$$
In particular, this generates a foliation whose leaves include an
$\RR$-invariant family of cylinders of the form $u_x$ described above for
each nontrivial gradient flow line $x : \RR\to \Surface$, and the trivial
cylinders~$\RR\times \gamma_z$ defined above for each $z \in \Crit(h_\epsilon)$.
The signs of the cylindrical ends can now be
deduced from the orientations of the Reeb orbits, using the fact that 
the orientations of $\gamma_z$ and $\gamma_\zeta$ in the $S^1$-direction 
match if and only if $z$ and $\zeta$ lie on the same side of the 
dividing set~$\Gamma$.
\end{proof}

From the proposition it follows that each of the embeddings $u_x$
is a (not necessarily $J_\epsilon$-holomorphic) parametrization of a finite energy
$J_\epsilon$-holomorphic curve, whose Fredholm index $\ind(u_x)$ is the sum of the 
Conley-Zehnder indices at its ends if both are positive, or the difference 
if one end is negative.  We shall abuse notation by identifying the
map $u_x : \RR \times S^1 \to \RR \times (S^1 \times \Surface)$ with the
unique unparametrized $J_\epsilon$-holomorphic curve it determines, and do the same with
the obvious unbranched multiple cover
$$
u_x^n(s,t) := u_x(s,nt)
$$
for each $n \in \NN$.

%
%
%
%

\begin{prop}
\label{periodsAndIndices}
Assume $h_+$ and $h_-$ are chosen so that their gradient flows are Morse-Smale
(see Remark~\ref{remark:MorseSmale}).  Then after possibly adjusting the
gluing map $\p \Surface_+ \to \p \Surface_-$, there exist functions
\begin{equation*}
\begin{split}
(0,\epsilon_0] \to (0,\infty) &: \epsilon \mapsto T_\epsilon \\
(0,\epsilon_0] \to \NN &: \epsilon \mapsto N_\epsilon
\end{split}
\end{equation*}
with $\lim_{\epsilon \to 0} T_\epsilon = \lim_{\epsilon \to 0} N_\epsilon =
+\infty$
such that the following conditions hold for all $\epsilon > 0$:
\begin{enumerate}
\item $\nabla h_\epsilon$ is Morse-Smale.
\item Every closed
orbit of $X_{\lambda_\epsilon}$ with period less than~$T_\epsilon$ 
is either in $S^1 \times \uU$ or is $\gamma_z^n$ for some
$z \in \Crit(h_\epsilon)$ and $n \le N_\epsilon$.
\item For all $n \le N_\epsilon$, $\gamma_z^n$ is nondegenerate as an orbit of 
$X_{\lambda_\epsilon}$ and has Conley-Zehnder index
\begin{equation}
\label{eqn:CZindex}
\muCZ(\gamma_z^n) = \begin{cases}
1 & \text{ if $\ind(z) = 0$ or $2$,}\\
0 & \text{ if $\ind(z) = 1$,}
\end{cases}
\end{equation}
with respect to the $S^1$-invariant trivialization of $\xi_\epsilon$
along $\gamma_z^n$, where $\ind(z)$ denotes the Morse index of~$z$.
\end{enumerate}
\end{prop}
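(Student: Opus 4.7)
The plan is to use formula \eqref{eqn:Reeb2} to reduce the analysis of Reeb orbits outside the interface region to the Hamiltonian dynamics of $j_\pm\nabla h_\pm$ on $\Sigma_\pm$. On $S^1\times(\Sigma_\pm\setminus\uU_\pm)$, the Reeb trajectories reparametrized by $\phi$ project onto trajectories of $\pm\epsilon\, j_\pm\nabla h_\pm$, so they preserve the level sets of $h_\pm$; on $S^1\times\uU$ the explicit formula for $X_{\lambda_\epsilon}$ in the proof of Proposition~\ref{prop:contactForm}(i) gives $\dot\rho\equiv 0$, and because $h_\pm=e^{-|\rho|}$ in the collars the two descriptions agree on the overlap. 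It follows that every closed Reeb orbit lies either entirely in $S^1\times\uU$ at some fixed $\rho$, or projects to $\Sigma_\pm$ as a single critical point of $h_\pm$ (giving $\gamma_z^n$), or projects onto a regular level circle of $h_\pm$. In the last ``extra'' case, the orbit winds $k$ times in $\phi$ and $\ell\geq 1$ times along the level circle per period, so matching velocities in \eqref{eqn:Reeb2} yields a closing relation of the form $\ell\, T_H(c)=2\pi k\epsilon+O(\epsilon^3)$, where $T_H(c)>0$ denotes the period of $j_\pm\nabla h_\pm$ on $\{h_\pm=c\}$. Because $h_\pm$ has no index-$2$ critical points by Lemma~\ref{lem:conf}, $T_H(c)$ tends to a finite positive limit near each minimum level and blows up near each saddle level, so by compactness of $\Sigma_\pm$ there is a uniform lower bound $T_H(c)\geq T_H^{\min}>0$. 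Hence extra orbits have total Reeb period $T=2\pi k\alpha \geq T_H^{\min}/\epsilon+O(\epsilon)$.

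Next, I would set $N_\epsilon:=\lfloor c_0/\epsilon\rfloor$ and $T_\epsilon:=2\pi N_\epsilon$ for a positive constant $c_0$ to be pinned down, ensuring both $N_\epsilon$ and $T_\epsilon$ go to $+\infty$ as $\epsilon\to 0$; taking $c_0<T_H^{\min}/(2\pi)$ guarantees $T_\epsilon\leq T_H^{\min}/\epsilon$ and so excludes extra orbits from having period below $T_\epsilon$, while orbits $\gamma_z^n$ with $n>N_\epsilon$ automatically have period $2\pi n>T_\epsilon$. For the linearization along $\gamma_z^n$, I would identify $\xi_\epsilon|_{\gamma_z}$ with $T_z\Sigma_\pm$ via the projection \eqref{eqn:projection}; this identification is $S^1$-invariant and hence provides the required trivialization. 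Reparametrizing Reeb time by $\phi$, the linearized projected dynamics become $\dot v=\epsilon\, j_\pm A\, v$ with $A:=\mathrm{Hess}(h_\pm)(z)$, so the linearized return map after $n$ laps is $M_n=\exp(2\pi n\epsilon\, j_\pm A)$. At a minimum of $h_\pm$, $A$ is $g_\pm$-positive-definite, $j_\pm A$ has eigenvalues $\pm i\sqrt{\det_{g_\pm}A}$, and $M_n$ is a symplectic rotation by angle $\theta_n=2\pi n\epsilon\sqrt{\det_{g_\pm}A}$ in the canonical orientation of $\xi_\epsilon$; imposing additionally $c_0\sqrt{\det_{g_\pm}\mathrm{Hess}(h_\pm)(z)}<1$ uniformly over all critical points $z$ keeps $\theta_n\in(0,2\pi)$ for $n\leq N_\epsilon$, yielding nondegeneracy and $\muCZ(\gamma_z^n)=1$. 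These minima of $h_\pm$ are exactly the index-$0$ critical points of $h_\epsilon$ on $\Sigma_-$ and the index-$2$ critical points on $\Sigma_+$. At a saddle of $h_\pm$ (i.e., an index-$1$ critical point of $h_\epsilon$), $A$ has mixed signature, $M_n$ is hyperbolic with positive real eigenvalues for every $n$, and $\gamma_z^n$ is nondegenerate with $\muCZ=0$.

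Finally, for the Morse--Smale condition: $\nabla h_\epsilon$ on $\Sigma_\pm\setminus\uU_\pm$ is a positive rescaling of $\mp\nabla h_\pm$, already Morse--Smale by Remark~\ref{remark:MorseSmale}; on $\uU$ the gradient trajectories are the horizontal lines $\theta=\mathrm{const}$; so Morse--Smale on all of $\Sigma$ becomes a transversality condition on the intersections in $\Gamma$ of traces of (un)stable manifolds from the two sides, which can be arranged after a $C^\infty$-small adjustment of the gluing diffeomorphism $\partial\Sigma_+\to\partial\Sigma_-$. The main technical delicacy I anticipate is rigorously verifying the uniform bound $T_H^{\min}>0$ when level sets have several components of possibly different periods, but this follows from compactness of $\Sigma_\pm$ together with continuous dependence of the period on the level and on the choice of component.
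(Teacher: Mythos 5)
Your argument is correct and follows the same route as the paper: use the explicit Reeb formula \eqref{eqn:Reeb2} to identify the closed orbits, exploit the Morse-Smale property and a small rotation of the gluing map to kill saddle-to-saddle connections, and read off the Conley-Zehnder indices from the linearized return maps at the critical points. The paper's own proof is considerably more terse; in particular it asserts the divergence of periods of the ``extra'' orbits on regular level sets directly from \eqref{eqn:Reeb2}, and defers the Conley-Zehnder computation to the standard Floer-theory reference \cite{SalamonZehnder:Morse}, whereas you derive the index formulas directly from the spectrum of $\exp(t\,\epsilon\,j_\pm A)$. Both are fine; the only minor quibbles with your write-up are normalization ones (e.g.\ the period of $\gamma_z$ converges to $1$, not $2\pi$, and the inequality $\mathrm{period}(\gamma_z^n) < T_\epsilon$ for $n\le N_\epsilon$ requires a small buffer, which the paper sidesteps by defining $N_\epsilon$ as the largest admissible $n$), none of which affect the substance.
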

\begin{proof}
Up to parametrization, the flow of $\nabla h_\epsilon$ matches that of 
$\nabla h_\pm$ on $\Surface_\pm \setminus \uU_\pm$ and 
$\p_\rho$ on~$\uU$.  Thus if $\nabla h_\pm$ are both Morse-Smale,
any flow lines of $\nabla h_\epsilon$ connecting two index~$1$ 
critical points must pass through~$\Gamma$, and can thus be eliminated by
a small rotation of the gluing map $\p \Surface_+ \to \p \Surface_-$.  The existence of
the function $T_\epsilon$ with $\lim_{\epsilon \to 0} T_\epsilon = \infty$
follows from \eqref{eqn:Reeb2}, as all orbits outside of $S^1 \times\uU$ 
other than the $\gamma_z^n$ for $z \in \Crit(h_\epsilon)$ correspond to
closed orbits of $j_\pm \nabla h_\pm$ in level sets of $h_\pm$, with
periods that become infinitely large as $\epsilon \to 0$.  We can then
define
$$
N_\epsilon := \max\{ n \in \NN\ |\ \text{All $\gamma_z^n$ have periods 
$< T_\epsilon$ as orbits of $X_{\lambda_\epsilon}$} \},
$$
and observe that $N_\epsilon \to \infty$ as $\epsilon \to 0$ since
the periods of $\gamma_z$ converge to~$1$.  The formula for
$\muCZ(\gamma_z^n)$ is a standard computation from Floer theory
relating Conley-Zehnder indices to Morse indices,
see for example \cite{SalamonZehnder:Morse}.
\end{proof}

We will assume from now on that the conditions of
Prop.~\ref{periodsAndIndices} are satisfied.
Then $\nabla h_\epsilon$ is Morse-Smale for all
$\epsilon \in (0,\epsilon_0]$, and
it will follow that each of the $J_\epsilon$-holomorphic cylinders $u_x$
corresponding to gradient flow lines $x : \RR \to \Surface$ between
critical points $z_-, z_+ \in \Crit(h_\epsilon)$ has positive 
Fredholm index.  Indeed, these cylinders come in five types:
\begin{enumerate}
\item
$z_- \in \Surface_-$ with index~$0$ and $z_+ \in \Surface_+$ with index~$2$: then
$\ind(u_x) = 2$ and both ends are positive.
\item
$z_-, z_+ \in \Surface_+$ with indices~$1$ and~$2$: then $\ind(u_x) = 1$ and
one end is negative.
\item
$z_-,z_+ \in \Surface_-$ with indices~$0$ and~$1$: then $\ind(u_x) = 1$ and
one end is negative.
\item
$z_- \in \Surface_-$ with index~$0$ and $z_+ \in \Surface_+$ with index~$1$: then
$\ind(u_x) = 1$ and both ends are positive.
\item
$z_- \in \Surface_-$ with index~$1$ and $z_+ \in \Surface_+$ with index~$2$: then
$\ind(u_x) = 1$ and both ends are positive.
\end{enumerate}
This classification is exactly the same for the multiply covered
cylinders $u_x^n(s,t)$ for all $n \le N_\epsilon$.
\begin{prop}
\label{prop:regular}
For every gradient flow line $x : \RR \to \Surface$, the corresponding
$J_\epsilon$-holomorphic cylinders $u_x^n$ for $n \le N_\epsilon$ 
are all Fredholm regular.
\end{prop}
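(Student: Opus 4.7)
The plan is to apply the automatic transversality criterion of Wendl \cite{Wendl:automatic}, which for a somewhere injective immersed finite-energy punctured $J$-holomorphic curve $u$ in a four-dimensional symplectization produces Fredholm regularity whenever its index is large enough compared to the normal first Chern number $c_N(u)$, the genus $g(u)$ and the number $\#\Gamma_0(u)$ of punctures asymptotic to Reeb orbits with even Conley--Zehnder parity. An orbifold refinement in the same reference covers unbranched multiple covers of embedded curves.

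First I would treat the simple cylinders $u_x$. By Proposition~\ref{prop:Jcomplex}, the family $\{u_x\} \cup \{\RR\times\gamma_z\}$ sits inside a smooth $J_\epsilon$-holomorphic foliation of $\RR\times(S^1\times\Surface)$, so the normal bundle of $u_x$ is isomorphic to the restriction of the tangent space to a neighboring leaf. Pairing this with the $S^1$-invariant trivialization of $\xi_\epsilon$ along each asymptotic orbit $\gamma_z$ yields
$$
g(u_x) = 0, \qquad c_N(u_x) = 0.
$$
By \eqref{eqn:CZindex}, a puncture asymptotic to $\gamma_z$ contributes to $\Gamma_0(u_x)$ precisely when $z$ has Morse index~$1$. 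Walking through the five combinatorial types preceding the proposition, one finds $(\ind, \#\Gamma_0) = (2,0)$ in type~$1$ and $(\ind,\#\Gamma_0) = (1,1)$ in types~$2$--$5$; in every case the automatic transversality inequality holds strictly, so each embedded $u_x$ is Fredholm regular.

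For the $n$-fold covers $u_x^n$ with $2 \le n \le N_\epsilon$, I would invoke the orbifold version of automatic transversality. Since $u_x^n$ is obtained from the embedded $u_x$ by the unbranched reparametrization $(s,t)\mapsto (s,nt)$, the linearized Cauchy--Riemann operator decomposes equivariantly under the $\ZZ/n$-action into a pullback summand and $n-1$ twisted summands, each a perturbed Cauchy--Riemann operator on the underlying cylinder with asymptotic data twisted by a root of unity. Because the Conley--Zehnder indices of $\gamma_z^n$ in the $S^1$-invariant trivialization are independent of~$n$ (Proposition~\ref{periodsAndIndices}) and $c_N$ scales controllably under covers, the inequality verified for the underlying simple $u_x$ promotes to the same inequality on every equivariant summand, giving surjectivity of each and hence Fredholm regularity of~$u_x^n$.

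The main obstacle is the careful bookkeeping behind the computation $c_N(u_x) = 0$ in all five combinatorial cases, which requires choosing the $S^1$-invariant trivialization of $\xi_\epsilon$ with orientations that are compatible across the dividing set~$\Gamma$ (recall that the orientations of $\gamma_z$ in the $\phi$-direction agree with $\pm\p_\phi$ according to whether $z$ lies in $\Surface_\pm$). Once this sign convention is fixed, the vanishing of $c_N(u_x)$ follows directly from the foliation picture, and the automatic transversality inequality reduces to the elementary arithmetic check carried out above, uniformly in all admissible multiplicities $n \le N_\epsilon$.
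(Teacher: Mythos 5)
Your approach is the same at its core---apply Wendl's automatic transversality criterion from \cite{Wendl:automatic}---and your treatment of the embedded cylinders $u_x$ matches the paper's: genus zero, a quick check of $\#\Gamma_0$ via \eqref{eqn:CZindex} in each of the five combinatorial types, and the inequality is immediate.

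Where you diverge is in the treatment of the unbranched covers $u_x^n$, and this is also where a gap appears. You state the criterion as applying only to \emph{somewhere injective} immersed curves and therefore invoke a separate ``orbifold refinement'' together with a $\ZZ/n$-equivariant splitting of the linearized operator. That detour is unnecessary: Theorem~1 of \cite{Wendl:automatic}, as quoted and used in the paper, applies to any \emph{immersed, connected} finite-energy curve, with no injectivity hypothesis. Since $u_x^n$ is an unbranched cover of an embedded cylinder, it is still an immersed connected genus-zero cylinder; by Proposition~\ref{periodsAndIndices} the orbits $\gamma_z^n$ have the same Conley--Zehnder indices as $\gamma_z$ for $n\le N_\epsilon$, so the count $\ind=2$, $\#\Gamma_0=0$ (resp.\ $\ind=1$, $\#\Gamma_0=1$) holds verbatim for all $n$ and the criterion applies directly. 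Beyond being superfluous, your equivariant argument has a real hole: the $n-1$ twisted summands in the $\ZZ/n$-decomposition carry asymptotic operators twisted by roots of unity, whose spectral data (and hence the relevant index/winding quantities) need not match those of the untwisted operator, yet you assert without computation that the inequality ``promotes'' to every summand. That step would require genuine verification. Fortunately you do not need it---the immersed-curve criterion already handles the covers outright.
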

\begin{proof}
By the criterion in \cite{Wendl:automatic}*{Theorem~1}, 
an immersed, connected 
finite energy $J_\epsilon$-holomorphic curve~$u$ with genus~$g$ asymptotic to
nondegenerate Reeb orbits is Fredholm regular whenever
$$
\ind(u) > 2g - 2 + \#\Gamma_0,
$$
where the integer $\#\Gamma_0 \ge 0$ denotes the number of ends at which~$u$
approaches orbits with even Conley-Zehnder index.  In the case at hand,
we always have $g=0$ and either $\ind(u)=2$ with $\#\Gamma_0 = 0$ or
$\ind(u)=1$ with $\#\Gamma_0 = 1$, so the criterion is satisfied in all cases.
\end{proof}
It follows that the embedded cylinders $u_x$ for all gradient flow lines 
$x$ on $\Surface$, together with the trivial cylinders $\RR\times \gamma_z$
for $z \in \Crit(h_\epsilon)$, form a stable
finite energy foliation in the sense of 
\cites{HWZ:foliations,Wendl:OTfol}. 

In the following, we will make use of the intersection theory for punctured
holomorphic curves, defined by Siefring \cite{Siefring:intersection}.
This theory defines an intersection number
$$
u * v \in \ZZ
$$
for any two asymptotically cylindrical maps $u, v$ from punctured Riemann
surfaces into the symplectization of a contact $3$-manifold, with the
following properties:
\begin{itemize}
\item $u * v$ is invariant under homotopies of $u$ and~$v$ through
asymptotically cylindrical maps.
\item $u * v \ge 0$ whenever both are finite energy pseudoholomorphic curves
that are not covers of the same somewhere injective curve, and
the inequality is strict if they have nonempty intersection.
\end{itemize}

\begin{lemma}
\label{lemma:homotopy}
Suppose $u$ and~$v$ are finite energy pseudoholomorphic curves in the
symplectization $\RR\times M$ of a contact manifold $(M,\xi)$, 
such that $u$ has no negative
ends, and the positive punctures $\zeta \in \Gamma_v^+$ of~$v$ are asymptotic
to Reeb orbits denoted by~$\gamma_\zeta$.  Then
$$
u * v = \sum_{\zeta \in \Gamma_v^+} u * (\RR \times \gamma_\zeta).
$$
\end{lemma}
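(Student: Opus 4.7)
The plan is to exploit the $\RR$-translation action on the symplectization together with the homotopy invariance of Siefring's intersection number. Concretely, I would translate $u$ far upward so that its image lies above the ``bulk'' of $v$, forcing all intersection contributions to localize at the positive ends of $v$, where $v$ is asymptotic to the trivial cylinders $\RR \times \gamma_\zeta$.

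The concrete steps are the following. (i) Since $u$ has no negative ends, the finite-energy condition implies that the $\RR$-component of $u$ is bounded below, say $u(\dot{S}_u) \subset [s_0,\infty) \times M$. Let $\tau_c : (s,x) \mapsto (s+c,x)$ and set $u_c := \tau_c \circ u$; then $u_c$ is asymptotically cylindrical to the same Reeb orbits as $u$, and the family $\{u_{c'}\}_{c' \in [0,c]}$ is a homotopy through asymptotically cylindrical maps, so by homotopy invariance of $*$ we have $u * v = u_c * v$ for every $c \in \RR$. (ii) By the exponential convergence of finite-energy curves to their asymptotic orbits, choose $T$ so large that $v^{-1}(\{s \geq T\} \times M)$ is a disjoint union $\bigsqcup_{\zeta \in \Gamma_v^+} \oO_\zeta$ of small punctured-disk neighborhoods of the positive punctures, on each of which $v$ is $C^1$-close to $\RR \times \gamma_\zeta$. (iii) Take $c$ with $s_0 + c > T$, so that $u_c(\dot{S}_u) \subset \{s > T\} \times M$; in particular, the image of $u_c$ intersects the image of $v$ only in $\bigcup_\zeta v(\oO_\zeta)$. (iv) Homotope $v$ inside each $\oO_\zeta$ through asymptotically cylindrical maps with fixed asymptotic orbit to make it coincide with $\RR \times \gamma_\zeta$ on a smaller neighborhood of $\zeta$, while leaving $v$ unchanged outside $\oO_\zeta$; since $u_c$ meets $v$ only in $\bigcup_\zeta \oO_\zeta$, this homotopy does not alter $u_c * v$. (v) Conclude $u_c * v = \sum_{\zeta \in \Gamma_v^+} u_c * (\RR \times \gamma_\zeta)$, and apply translation invariance to the $\RR$-invariant trivial cylinders to get $u_c * (\RR \times \gamma_\zeta) = u * (\RR \times \gamma_\zeta)$.

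The main obstacle is the justification of the additivity in step (v). I would invoke Siefring's decomposition of $u * v$ into a geometric intersection count plus an asymptotic ``winding'' contribution $i_\infty(u,v)$, with the latter expressed as a sum over pairs of asymptotic ends approaching covers of a common Reeb orbit. The key observation is that for large $c$ the geometric part of $u_c * v$ is concentrated in $\bigcup_\zeta \oO_\zeta$ and matches the sum over $\zeta$ of geometric intersections of $u_c$ with $\RR \times \gamma_\zeta$; and in the asymptotic part, a positive end of $u$ cannot contribute against a negative end of $v$ (the two lie in the disjoint regions near $+\infty$ and $-\infty$ respectively), so that only pairs $(z,\zeta)$ with $z \in \Gamma_u^+$ and $\zeta \in \Gamma_v^+$ survive, and each such contribution depends only on the asymptotic behavior of $v$ at $\zeta$, which matches that of $\RR \times \gamma_\zeta$. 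Assembling these pieces gives the claimed identity.
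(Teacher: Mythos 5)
Your proposal is correct and takes essentially the same approach as the paper: translate $u$ upward into $[0,\infty)\times M$, use homotopy invariance of Siefring's $*$ to replace $v$ with something that agrees with the trivial half-cylinders wherever it could meet $u$, and then identify the remaining geometric and asymptotic contributions with $\sum_\zeta u*(\RR\times\gamma_\zeta)$. The paper's version is terser (it homotopes $v$ so that its entire intersection with $[0,\infty)\times M$ is $\bigsqcup_\zeta [0,\infty)\times\gamma_\zeta$ and then just cites homotopy invariance), but the Siefring-decomposition bookkeeping you spell out in your last paragraph is exactly what the paper leaves implicit.
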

\begin{proof}
By $\RR$-translation we can assume the image of~$u$ is contained in
$[0,\infty) \times M$, and can then homotop~$v$ through a family of
asymptotically cylindrical maps so that its intersection with
$[0,\infty) \times M$ consists only of the trivial half-cylinders
$[0,\infty) \times \gamma_\zeta$ for $\zeta \in \Gamma_v^+$.  The lemma thus
follows from the homotopy invariance of $u * v$.
\end{proof}

It is possible in general to have $u * v > 0$ even if $u$ and $v$ are disjoint
holomorphic curves: in this case intersections can ``emerge from infinity''
under generic perturbations, and excluding this typically requires the
computation of certain winding numbers.  We will only need to worry about
this in one special case:

\begin{lemma}
\label{lemma:asympInt}
For any $z \in \Crit(h_\epsilon)$, a gradient flow line $x : \RR\to \Surface$
that begins and ends on opposite sides of the interface,
and $n \le N_\epsilon$, $(\RR\times \gamma_z^n) * u_x = 0$.
\end{lemma}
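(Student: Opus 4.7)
The Siefring intersection number decomposes as the algebraic count of interior intersections plus asymptotic contributions from pairs of ends approaching covers of a common Reeb orbit. My plan is to analyze these contributions by splitting into cases based on whether $z$ is an endpoint of the gradient flow line~$x$.

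In the first case, $z \ne x(\pm\infty)$. The projection of $u_x$ to $\Surface$ is the flow line $x$, which avoids $z$, so the images of $u_x$ and $\RR \times \gamma_z^n$ in the symplectization are disjoint. Moreover, the asymptotic orbits of $u_x$ are $\gamma_{z_\pm}$ with $z_\pm \ne z$, so no end of $u_x$ approaches a cover of $\gamma_z$. Both the interior and asymptotic contributions vanish identically, yielding $(\RR \times \gamma_z^n) * u_x = 0$.

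In the second case, $z$ coincides with one endpoint of $x$, say $x(+\infty) = z$ (the other case is symmetric). The flow line still meets $z$ only in the limit, so interior intersections vanish. The remaining contributions come from the two pairs of asymptotic ends over the shared orbit $\gamma_z$, namely the pair consisting of the positive end of $u_x$ and the positive end of $\RR \times \gamma_z^n$, and the pair consisting of the positive end of $u_x$ and the negative end of $\RR \times \gamma_z^n$. To show both vanish I would use the following winding-number computation. Under the $S^1$-invariant trivialization of $\xi_\epsilon$ along $\gamma_z$, which identifies $\xi_\epsilon|_{\gamma_z}$ with $S^1 \times T_z\Surface$, the leading asymptotic eigenfunction of $u_x$ at its positive end corresponds (via the construction of $u_x$ from the gradient flow) to an eigenvector of $\mathrm{Hess}(h_\epsilon)(z)$, namely the tangent direction along which $x$ approaches $z$. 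Since this eigenvector is a $t$-independent section, its relative winding number with respect to the $S^1$-trivialization is zero; the same holds tautologically for the trivial cylinder $\RR \times \gamma_z^n$. Siefring's asymptotic intersection formula then forces both contributions to be zero.

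The main obstacle is the explicit verification of the asymptotic intersection in Case 2. An alternative and perhaps cleaner route is to invoke the foliation structure directly: by Propositions~\ref{prop:Jcomplex} and~\ref{prop:regular}, $u_x$ and $\RR \times \gamma_z$ are distinct leaves of a Fredholm regular finite energy foliation of the symplectization, and for such a foliation (cf.~\cite{Wendl:openbook2}) distinct leaves have Siefring intersection zero even when they share asymptotic orbits. Combined with how intersection numbers transform under unbranched covers of one factor, this delivers the same conclusion for the $n$-fold cover $\RR \times \gamma_z^n$ in place of $\RR \times \gamma_z$.
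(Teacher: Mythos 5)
Your Case 1 analysis is fine, and your Case 2 instinct (reduce to a winding number computation) matches the spirit of the paper's proof. But there is a genuine gap in Case 2. In Siefring's theory, the asymptotic contribution to $(\RR\times\gamma_z^n)*u_x$ at a shared positive end vanishes if and only if the asymptotic winding of $u_x$'s end achieves its \emph{extremal} value $\alpha_-(\gamma_z^n)$, the upper bound furnished by the relevant eigenvalue of the asymptotic operator. You compute that $u_x$'s end has winding $0$ (via the Hessian eigenvector picture), but that only establishes the inequality $\alpha_-(\gamma_z^n)\ge 0$; it does not show $\alpha_-(\gamma_z^n)=0$. If $\alpha_-$ were positive, winding $0$ would produce a strictly positive asymptotic contribution, and the lemma would fail. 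The missing step is to bound $\alpha_-$ from above, which the paper does via the relation $\muCZ(\gamma_z^n)=2\alpha_-(\gamma_z^n)+p(\gamma_z^n)$ with $p\in\{0,1\}$ from \cite{HWZ:props2}, combined with $\muCZ(\gamma_z^n)\in\{0,1\}$ from Proposition~\ref{periodsAndIndices}; this forces $\alpha_-(\gamma_z^n)=0$, and only then does the equality of windings give a vanishing asymptotic contribution. Your proof as written never uses the Conley--Zehnder index at all, which is a sign that something is missing.

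Your proposed alternative route --- that distinct leaves of a Fredholm regular finite energy foliation have Siefring intersection zero, then transfer to the $n$-fold cover --- is plausible but is doing a lot of work in a single citation. That statement, at the level of generality needed here (leaves sharing asymptotic orbits), is itself proved precisely by a winding-number argument of the type you are trying to avoid, and the behavior of $*$ under unbranched covers is likewise not formal. If you want to make that route rigorous you would end up reconstructing the Conley--Zehnder index computation anyway, so the shorter fix is simply to add the estimate $\alpha_-(\gamma_z^n)=0$ to your Case 2.
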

\begin{proof}
The curves $\RR\times \gamma_z^n$ and $u_x$ obviously do not intersect since $x$ does not
pass through any critical points, so we only have
to check that there are no asymptotic contributions to 
$(\RR\times \gamma_z^n) * u_x$.
This is trivially true unless $z$ is one of the end points of~$x$, so assume
the latter.  Then the definition of the intersection number in 
\cite{Siefring:intersection} implies that $(\RR\times \gamma_z^n) * u_x = 0$ 
if and only if the asymptotic end of $u_x^n$ approaching $\gamma_z^n$ 
has the largest possible asymptotic winding about the orbit.  
This bound on the winding is an integer $\alpha_-(\gamma_z^n)$, which is
the winding of a particular eigenfunction of the Hessian of the contact
action functional, and was shown in \cite{HWZ:props2} to be related to
the Conley-Zehnder index by
$$
\muCZ(\gamma_z^n) = 2\alpha_-(\gamma_z^n) + p(\gamma_z^n),
$$
where $p(\gamma_z^n) \in \{0,1\}$.
Since $\muCZ(\gamma_z^n)$ is either~$0$ or~$1$ by 
Prop.~\ref{periodsAndIndices}, we conclude $\alpha_-(\gamma_z^n) = 0$,
which is obviously the same as the winding of $u_x^n$ about $\gamma_z^n$
as it approaches asymptotically.
\end{proof}

\begin{prop}
\label{prop:allCylinders}
Suppose $u : \dot{\RS} \to \RR \times (S^1 \times \Surface)$ is a finite
energy $J_\epsilon$-holomorphic curve which is not a cover of a
trivial cylinder and has all its positive ends asymptotic to
Reeb orbits of the form $\gamma_z^n$ for $z \in \Crit(h_\epsilon)$ and
$n \le N_\epsilon$.  Then $u$ is a cover of $u_x$ for some gradient flow line
$x : \RR \to \Surface$.
\end{prop}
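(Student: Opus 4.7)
The plan is to exploit the fact that, by Propositions~\ref{prop:Jcomplex} and~\ref{prop:regular}, the cylinders $u_x$ for gradient flow lines $x$ of $\nabla h_\epsilon$, together with the trivial cylinders $\RR\times\gamma_z$ for $z\in\Crit(h_\epsilon)$, assemble into a stable finite energy foliation of $\RR\times(S^1\times\Surface)$ by embedded Fredholm regular cylinders. The strategy is to show that $u$ has vanishing intersection number with every leaf it does not cover, so that positivity of intersections forces $u$ to be a multiple cover of a single leaf $u_x$.

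As a preliminary step I would analyze the asymptotics of $u$. By Stokes' theorem the action of each negative end of $u$ is bounded by the total action of its positive ends, and after shrinking $\epsilon_0$ if necessary, Proposition~\ref{periodsAndIndices} ensures that every end of $u$ is asymptotic to a nondegenerate orbit $\gamma_z^n$ with $n\le N_\epsilon$. Each such orbit has $\muCZ(\gamma_z^n)\in\{0,1\}$, so the argument of Lemma~\ref{lemma:asympInt} shows that the extremal asymptotic winding at every end of $u$ vanishes; in particular, no end of $u$ contributes any asymptotic intersection with the leaves of the foliation.

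The heart of the argument is the following intersection computation. For any gradient flow line $x$ whose endpoints lie on opposite sides of the interface $\Gamma$, the cylinder $u_x$ has both ends positive (types~1, 4 and~5 in the classification preceding Proposition~\ref{prop:regular}), hence no negative ends. Applying Lemma~\ref{lemma:homotopy} with $u_x$ in the role of the curve without negative ends gives
\begin{equation*}
u_x * u \;=\; \sum_{\zeta\in\Gamma_u^+} u_x * (\RR\times\gamma_{z_\zeta}^{n_\zeta}),
\end{equation*}
and each term on the right vanishes by Lemma~\ref{lemma:asympInt}. Hence $u_x * u = 0$, and by Siefring's positivity of intersections, either $u$ is a cover of $u_x$---in which case the proposition is proved---or $u$ is geometrically disjoint from $u_x$ with no asymptotic contribution.

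The main obstacle is ruling out the latter alternative for \emph{every} crossing leaf simultaneously. In that scenario, the image of $u$ lies in the closed complement of the union of crossing leaves, which projects under $\pi_\Surface$ to a one-dimensional subset of $\Surface$ consisting of $\Crit(h_\epsilon)$ together with the non-crossing flow lines (the unstable manifolds of saddles in $\Surface_+$ and the stable manifolds of saddles in $\Surface_-$). The preimage of this set in $\RR\times(S^1\times\Surface)$ is itself foliated by the trivial cylinders and the remaining non-crossing $u_x$'s. To finish, one can run a variant of the intersection computation at each non-crossing leaf $u_x$---balancing its single negative end against an appropriate trivial cylinder so that Lemma~\ref{lemma:homotopy} can still be applied, or invoking automatic transversality together with a local normal-form argument in a tubular neighborhood of $u_x$, in which the normal component of $u$ satisfies a Cauchy--Riemann-type equation with isolated zeros unless identically vanishing. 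Either route yields $u_x*u=0$ for every non-crossing leaf as well; combined with the connectedness of $u$ and the assumption that $u$ is not a cover of a trivial cylinder, this forces $u$ to be a cover of some single $u_x$.
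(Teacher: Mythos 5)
Your computation $u_x * u = 0$ for crossing cylinders $u_x$ is exactly the calculation at the heart of the paper's proof, and you correctly apply Lemmas~\ref{lemma:homotopy} and~\ref{lemma:asympInt}. But you have identified a gap that you do not close, and the attempted closure is the weak point.

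The paper does \emph{not} split into the two cases (``$u$ meets some crossing leaf'' vs.\ ``$u$ avoids all crossing leaves'') that drive your argument. Instead it observes that if $u$ is not a cover of any leaf of the foliation, then by the covering property of the stable finite energy foliation $u$ must have an interior intersection with \emph{some} $u_x$, for some (a priori arbitrary) flow line $x$. It then uses positivity of intersections to perturb: an interior intersection has positive local degree, hence persists when $x$ is nudged to a nearby flow line, and since the flow lines from an index-$0$ minimum in $\Surface_-$ to the index-$2$ maximum in $\Surface_+$ project to an open dense subset of $\Surface$, one may assume $x$ is such a generic crossing flow line. The intersection computation you wrote down then gives $u_x * u = 0$, a contradiction. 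This perturbation step is precisely what lets the paper avoid ever discussing non-crossing leaves.

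Your alternative path is not worked out and, as stated, does not go through. Lemma~\ref{lemma:homotopy} requires the first curve to have no negative ends; a non-crossing $u_x$ has a negative end, and ``balancing'' it against a trivial cylinder does not produce a holomorphic curve without negative ends to which the lemma applies (gluing $\RR\times\gamma_{z_-}$ onto the negative end of $u_x$ still leaves a negative end). The local normal-form argument you mention is essentially a restatement of positivity of intersections and does not by itself yield $u_x * u = 0$ for non-crossing leaves. What \emph{would} finish your case analysis is a separate argument that a connected $J_\epsilon$-holomorphic curve contained in the $3$-dimensional slab swept out by $\{u_x^\sigma\}_{\sigma\in\RR}$ for a fixed non-crossing $x$ must lie in a single leaf (since a $3$-dimensional subspace of a $4$-dimensional $J$-vector space contains at most one $J$-complex plane), but you neither invoke this nor supply the intersection identity you claim.

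One further small inaccuracy: the preliminary assertion that \emph{every} end of $u$ is asymptotic to some nondegenerate $\gamma_z^n$ is not justified and is generally false. Proposition~\ref{periodsAndIndices} allows short orbits in $S^1\times\uU$ as well, and these are Morse--Bott, not nondegenerate; the hypotheses of Proposition~\ref{prop:allCylinders} constrain only the positive ends of $u$. Fortunately your main computation, like the paper's, only ever uses the positive ends of $u$ in Lemma~\ref{lemma:homotopy}, so this slip is not load-bearing.
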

\begin{proof}
If $u$ is neither a cover of any $u_x$ nor of a trivial cylinder
over $\gamma_z$ for some $z \in \Crit(h_\epsilon)$,
then it must have a nontrivial intersection with one of the curves~$u_x$, 
implying $u * u_x > 0$.  By a small
perturbation using positivity of intersections, we can assume also
that~$x$ is a \emph{generic} flow line, connecting an index~$0$ critical
point $z_- \in \Surface_-$ to an index~$2$ critical point $z_+ \in \Surface_+$.
Then $u_x$ has no negative ends, so $u * u_x$ is the sum of the
intersection numbers of~$u_x$ with all the positive asymptotic orbits
of~$u$ by Lemma~\ref{lemma:homotopy}.  But these are all zero
by Lemma~\ref{lemma:asympInt}, giving a contradiction.
\end{proof}

\begin{prop}
\label{prop:covers}
Suppose $x : \RR \to \Surface$ is a gradient flow line of $h_\epsilon$ and
$u : \dot{\RS} \to \RR \times (S^1 \times \Surface)$ is a $J_\epsilon$-holomorphic 
multiple cover of~$u_x$ with
covering multiplicity at most~$N_\epsilon$.  Then $\ind(u) \ge 1$, and
the inequality is strict unless the cover is unbranched,
i.e.~$u = u_x^n$ for some $n \le N_\epsilon$.
\end{prop}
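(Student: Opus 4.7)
The plan is to compute $\ind(u)$ directly from the Fredholm index formula
$$
\ind(u) = -\chi(\dot{\RS}) + 2 c_1^\Phi(u^*\xi_\epsilon) + \sum_{z \in \Gamma^+} \muCZ^\Phi(\gamma_{z}) - \sum_{z \in \Gamma^-} \muCZ^\Phi(\gamma_{z}),
$$
where $\Phi$ is the $S^1$-invariant trivialization along the orbits $\gamma_z^m$. Two special features make each term transparent. First, Prop.~\ref{periodsAndIndices} implies that $\muCZ^\Phi(\gamma_z^m)$ depends only on the Morse index of~$z$, not on the multiplicity $m \leq N_\epsilon$. Second, since $u_x$ is an $S^1$-invariant embedded cylinder, the $S^1$-invariant trivializations at its two ends extend to a global trivialization of $u_x^*\xi_\epsilon$, so $c_1^\Phi(u_x^*\xi_\epsilon)=0$, and by naturality under covers $c_1^\Phi(u^*\xi_\epsilon) = 0$ as well.

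For the Euler characteristic I would apply Riemann--Hurwitz to the degree-$n$ branched covering $u \to u_x$. Writing $\ell_p \geq 1$ for the number of preimages above each puncture $p \in \{p_1,p_2\}$ of $u_x$ and $Z_{\mathrm{int}}(u) \geq 0$ for the total interior branching order, the fact that $u_x$ has Euler characteristic zero yields
$$
\chi(\dot{\RS}) = -Z_{\mathrm{int}}(u), \qquad \ell_{p_1} + \ell_{p_2} = 2 - 2g(\dot{\RS}) + Z_{\mathrm{int}}(u).
$$
Substituting into the index formula and using that all preimages of a given puncture $p$ share the same $\muCZ^\Phi(\gamma_{z(p)})$ by the multiplicity independence, I obtain
$$
\ind(u) = Z_{\mathrm{int}}(u) + \sum_{p \in \{p_1, p_2\}} \epsilon_p \, \ell_p \, \muCZ^\Phi(\gamma_{z(p)}),
$$
where $\epsilon_p \in \{+1,-1\}$ is the sign of~$p$ and $z(p) \in \Crit(h_\epsilon)$ is the asymptotic critical point.

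A case check over the five types of cylinders listed before Prop.~\ref{prop:regular} confirms that $\ind(u_x) \geq 1$ in every case, and crucially that every negative puncture of $u_x$ lies at a Morse-index-$1$ critical point (so contributes $\muCZ^\Phi = 0$ by~\eqref{eqn:CZindex}). Consequently $\epsilon_p \, \ell_p \, \muCZ^\Phi(\gamma_{z(p)}) \geq \epsilon_p \, \muCZ^\Phi(\gamma_{z(p)})$ at each puncture---trivially for negative punctures (both sides vanish), and because $\ell_p \geq 1$ and $\muCZ^\Phi \geq 0$ at positive punctures. Summing over $p$ gives
$$
\ind(u) \;\geq\; Z_{\mathrm{int}}(u) + \ind(u_x) \;\geq\; 1.
$$
Equality $\ind(u)=1$ forces $\ind(u_x)=1$ (excluding type~1, for which $\ind(u_x)=2$), $Z_{\mathrm{int}}(u) = 0$, and $\ell_p = 1$ at every positive puncture carrying $\muCZ^\Phi = 1$. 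The Riemann--Hurwitz identity $\ell_{p_1}+\ell_{p_2} = 2 - 2g + Z_{\mathrm{int}}$ then compels $g=0$ and $\ell_p = 1$ at the remaining puncture as well, so $u$ is an unbranched $n$-fold cover, i.e.~$u = u_x^n$. The only subtle point is this last Riemann--Hurwitz bookkeeping; all the essential analytic input---the multiplicity-independence of $\muCZ$ and the identification of negative ends with Morse-index-$1$ critical points---is already packaged in Prop.~\ref{periodsAndIndices} and the case classification preceding Prop.~\ref{prop:regular}.
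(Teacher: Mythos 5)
Your proof is correct and follows essentially the same route as the paper's: the index formula, the vanishing of $c_1$ by $S^1$-invariance, and the Conley--Zehnder estimates from Proposition~\ref{periodsAndIndices}. Your version is slightly more explicit in the Riemann--Hurwitz bookkeeping and in isolating the uniform observation that negative ends of $u_x$ always sit over index-$1$ critical points, which gives the cleaner inequality $\ind(u)\ge Z_{\mathrm{int}}(u)+\ind(u_x)$; the paper instead argues $\muCZ(u)\ge 1$ case by case and handles the equality analysis more tersely by noting $-\chi(\dot{\RS})>0$ unless $\dot{\RS}$ is a cylinder.
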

\begin{proof}
The index formula for~$u$ is
$$
\ind(u) = -\chi(\dot{\RS}) + 2 c_1(u^*\xi) + \muCZ(u),
$$
where $\muCZ(u)$ is the sum of the Conley-Zehnder indices of its positive
asymptotic orbits minus those of its negative asymptotic orbits,
and $c_1(u^*\xi)$ is the relative first Chern
number of the bundle $u^*\xi \to \dot{\RS}$ with respect to the
natural trivialization of each orbit $\gamma_z^n$.  The latter vanishes
due to the $S^1$-invariance (cf.~Remark~\ref{remark:Euler}).  
For the Conley-Zehnder indices, we
use Prop.~\ref{periodsAndIndices}, distinguishing between two cases:
\begin{itemize}
\item If $x$ passes through~$\Gamma$, then both ends of $u_x$ are positive
and thus all ends of~$u$ are positive.  Moreover, the Morse-Smale condition
guarantees that $u_x$ cannot have both its ends at hyperbolic critical points
with Conley-Zehnder index~$0$, hence $\muCZ(u) \ge 1$.
\item Otherwise $u_x$ has a positive end at an elliptic critical point
$z_+$ with $\muCZ(\gamma_{z_+}) = 1$ and a negative end at a hyperbolic
critical point $z_-$ with $\muCZ(\gamma_{z_-}) = 0$, so again
$\muCZ(u) \ge 1$.
\end{itemize}
As a result, $\ind(u) \ge -\chi(\dot{\RS}) + 1$, which is strictly
greater than~$1$ unless $\dot{\RS}$ is a cylinder, in which case there are
no branch points.
\end{proof}

\begin{prop}
\label{prop:connectors}
Suppose $z \in \Crit(h_\epsilon)$ and $u : \dot{\RS} \to \RR \times
(S^1 \times \Surface)$ is a $J_\epsilon$-holomorphic multiple cover of
$\RR\times \gamma_z$ with covering multiplicity at most~$N_\epsilon$.
Then $\ind(u) \ge 0$, and the inequality is strict unless~$u$ has
exactly one positive end.
\end{prop}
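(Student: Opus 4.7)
My plan is to apply the Fredholm index formula
\[
\ind(u) = -\chi(\dot{\RS}) + 2c_1(u^*\xi) + \muCZ(u),
\]
with all characteristic numbers and Conley--Zehnder indices computed with respect to the $S^1$-invariant trivializations of $\xi$ along the orbits $\gamma_z^n$. Because $u$ covers the trivial cylinder $\RR \times \gamma_z$, the bundle $u^*\xi$ is pulled back from $\xi|_{\gamma_z}$, which is trivialized by the $S^1$-action; hence $c_1(u^*\xi) = 0$ in these preferred frames. Writing $p$, $q$ and $g$ for the numbers of positive punctures, negative punctures and the genus of $\dot{\RS}$ respectively, one has $-\chi(\dot{\RS}) = p + q + 2g - 2$. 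The hypothesis that the covering multiplicity is at most $N_\epsilon$ ensures that every asymptotic orbit of $u$ lies within the range where Proposition~\ref{periodsAndIndices} applies.

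I would then substitute the Conley--Zehnder computation from Proposition~\ref{periodsAndIndices}: for all $n \le N_\epsilon$, $\muCZ(\gamma_z^n) = 1$ if $z$ has Morse index $0$ or $2$ (the elliptic case), and $\muCZ(\gamma_z^n) = 0$ if $z$ has Morse index $1$ (the hyperbolic case). The total Conley--Zehnder contribution therefore equals $p - q$ in the elliptic case and $0$ in the hyperbolic case, giving
\[
\ind(u) = \begin{cases} 2(p-1) + 2g & \text{elliptic,} \\ p + q + 2g - 2 & \text{hyperbolic.} \end{cases}
\]

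In the elliptic case, $p \ge 1$ holds because every finite-energy curve in a symplectization must have at least one positive end, so $\ind(u) \ge 0$ with equality exactly when $p = 1$ and $g = 0$, and strict inequality whenever $p > 1$. The main obstacle is the hyperbolic case, where I also need $q \ge 1$. Here I would invoke the maximum principle: since $u$ covers the trivial cylinder, it factors as $u = \iota \circ v$ for a holomorphic map $v : \dot{\RS} \to \RR \times S^1$ (with $\iota$ the canonical inclusion of $\RR \times \gamma_z$), whose $\RR$-component is a proper harmonic function on $\dot{\RS}$ that tends to $+\infty$ at positive punctures and $-\infty$ at negative ones. If $q$ were zero, this function would tend to $+\infty$ at every puncture, so for all sufficiently large $R$ the sublevel set $\{s \circ v \le R\}$ would be a compact subsurface whose boundary is a level set and whose interior attains the minimum of $s \circ v$---forcing $v$ to be constant by the maximum principle, a contradiction. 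With $p, q \ge 1$ established in the hyperbolic case, the formula above gives $\ind(u) \ge 0$ with equality exactly when $p = q = 1$ and $g = 0$, and strict inequality whenever $p > 1$, completing the argument.
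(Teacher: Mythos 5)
Your proof is correct and follows essentially the same approach as the paper: the Fredholm index formula with $c_1(u^*\xi)=0$ in the $S^1$-invariant trivialization, followed by a case split according to whether $\ind(z)=1$ (all asymptotic orbits have $\muCZ=0$) or $\ind(z)\in\{0,2\}$ (all have $\muCZ=1$). The one place you supply more explicit detail is the hyperbolic case, where you use the maximum principle to show a nontrivial cover of $\RR\times\gamma_z$ must have at least one negative end; the paper instead simply asserts $\ind(u)=-\chi(\dot{\RS})\ge 0$ with equality only for a cylinder having one positive and one negative end, implicitly relying on the standard fact that a holomorphic branched cover of $\RR\times S^1\cong\CC^*$ has $\chi\le 0$ and is unbranched (hence of the form $z\mapsto z^n$) when $\chi=0$.
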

\begin{proof}
If $\ind(z)=1$, then Prop.~\ref{periodsAndIndices} implies that all asymptotic
orbits of~$u$ have Conley-Zehnder index~$0$ in the natural trivialization,
hence $\ind(u) = -\chi(\dot{\RS}) \ge 0$, with equality if and only if
$\dot{\RS}$ is a cylinder, implying it has one positive and one negative
end.  Otherwise, the asymptotic orbits of~$u$ all have Conley-Zehnder index~$1$,
so if $g \ge 0$ is the genus of~$u$ and its sets of positive and negative 
punctures are denoted by $\Gamma^+$ and $\Gamma^-$ respectively, we have
\begin{equation*}
\begin{split}
\ind(u) &= -\chi(\dot{\RS}) + \#\Gamma^+ - \#\Gamma^- =
-(2 - 2g - \#\Gamma^+ - \#\Gamma^-) + \#\Gamma^+ - \#\Gamma^- \\
&= 2g - 2 + 2\#\Gamma^+ = 2g + 2\left( \#\Gamma^+ - 1 \right) \ge 0.
\end{split}
\end{equation*}
\end{proof}

\begin{remark}
\label{remark:orientations}
The moduli spaces of $J_\epsilon$-holomorphic curves
in $\RR \times (S^1 \times \Surface)$ can be oriented coherently whenever
all asymptotic orbits are nondegenerate and ``good'', see
\cites{SFT,BourgeoisMohnke}.  In particular, the spaces of cylinders
$u_x^n$ covering gradient flow lines~$x$ can be given orientations that
match a corresponding set of coherent orientations for the spaces of
Morse gradient flow lines.
\end{remark}

\subsection{Proofs of Theorems~\ref{thm:noGiroux} and~\ref{thm:higherOrder}}
\label{subsec:proofs}

The results of the previous subsection give enough information
on $J_\epsilon$-holomorphic curves in $\RR \times (S^1 \times \Surface)$ to
prove the main theorems.  Recall that the natural compactification
of the moduli space of finite energy punctured holomorphic curves
consists of holomorphic \emph{buildings}, which in general may have
multiple levels and nodes, see \cite{SFTcompactness}.

\begin{proof}[Proof of Theorem~\ref{thm:noGiroux}]
Assume $\Surface_-$ is disconnected and let $\Surface_-^1$ and $\Surface_-^2$ denote two of
its connected components.  Then we can choose the Morse functions
$h_\pm$ so that $h_-$ has exactly one index~$0$ critical point in each
of $\Surface_-^1$ and $\Surface_-^2$, denoted by $z_-^1$ and $z_-^2$ respectively,
and $h_+$ has an index~$1$ critical point $z_+ \in \Surface_+$ such that the two 
negative gradient flow lines of~$h_\epsilon$ flowing out of~$z_+$ end
at $z_-^1$ and $z_-^2$ respectively.
In particular, there is a \emph{unique} gradient flow line~$x_1$ connecting
$z_-^1$ to~$z_+$.  By Prop.~\ref{prop:allCylinders}, the set of all
$J_\epsilon$-holomorphic buildings with no negative ends and positive ends
approaching any subset of the two simply covered orbits $\gamma_{z_+}$
and $\gamma_{z_-^1}$ consists of the following:
\begin{enumerate}
\item The cylinder $u_{x_1}$ with two positive ends at $\gamma_{z_+}$
and~$\gamma_{z_-^1}$.
\item All cylinders $u_x$ corresponding to gradient flow lines~$x$
connecting~$z_-^1$ to index~$1$ critical points in~$\Surface_-^1$.  Each of these 
cylinders has one positive and one negative end, with the positive end
approaching~$\gamma_{z_-^1}$.
\end{enumerate}
Since both of these orbits are nondegenerate and all of the holomorphic
curves in question are Fredholm regular by Prop.~\ref{prop:regular},
they all survive any sufficiently small perturbation to make
$\lambda_\epsilon$ nondegenerate and $J_\epsilon$ generic, as well as the
introduction of an abstract perturbation for the holomorphic curve
equation.  The chain complex for SFT
can therefore be defined so as to contain two special generators
$q_{\gamma_{z_-^1}}$ and $q_{\gamma_{z_+}}$ such that
$\Dsft ( q_{\gamma_{z_-^1}} q_{\gamma_{z_+}} )$ is computed by counting
the $J_\epsilon$-holomorphic curves listed above
(cf.~Remark~\ref{remark:abstract}).  We claim now that for a suitable 
choice of coherent orientations, the
algebraic count of cylinders of the second type is zero.  Indeed,
the orientations can be chosen compatibly with a choice of coherent
orientations for the space of gradient flow lines
(cf.~Remark~\ref{remark:orientations}), thus the count of these cylinders
matches the count of all gradient flow lines connecting $z_-^1$ to
index~$1$ critical points in~$\Surface_-^1$.  The latter computes a part of
the term $d \langle z_-^1 \rangle$ in the Morse cohomology of~$\Surface$,
but since $z_-^1$ is the only index~$0$ critical point in~$\Surface_-^1$,
$\langle z_-^1 \rangle$ is a closed generator of the Morse cohomology,
and the claim follows.  We conclude that only the cylinder $u_{x_1}$
with two positive ends gives a nontrivial count, and thus
$$
\Dsft \left( q_{\gamma_{z_-^1}} q_{\gamma_{z_+}} \right) = \hbar.
$$
\end{proof}

Recall from Remark~\ref{rem:truncation} that if all the Reeb
orbits below some given action $T>0$ are nondegenerate, then one can
define a truncated complex $(\aA(\lambda,T)[[\hbar]],\Dsft)$.
The proof that $(V_g,\xi_k)$ has no algebraic $(k-2)$-torsion for
$k \ge 2$ depends on establishing the following criterion.
\begin{lemma}
\label{lemma:algHyperTorsionFree}
Suppose $K$ is a nonnegative integer and $(M,\xi)$ is a closed contact
manifold admitting a contact form $\lambda$, compatible almost complex 
structure~$J$ and constant $T > 0$ with the following properties:
\begin{enumerate}
\item All Reeb orbits of~$\lambda$ with period less than~$T$ are 
nondegenerate.
\item For every pair of integers $g \ge 0$ and $r \ge 1$ with
$g + r \le K + 1$, let $\overline{\mM}_{g,r}^1(J ; T)$ denote the space of all
index~$1$ connected $J$-holomorphic buildings
in $\RR\times M$ with arithmetic genus~$g$, no negative ends, 
and $r$~positive ends approaching orbits whose periods add up to less than~$T$.
Then $\overline{\mM}_{g,r}^1(J ; T)$ consists of finitely many smooth curves
(i.e.~buildings with only one level and no nodes), which are all
Fredholm regular.
\item There is a choice of coherent orientations for which the algebraic
count of curves in $\overline{\mM}_{g,r}^1(J ; T)$ is zero
whenever $g + r \le K + 1$.
\end{enumerate}
Then if $\Dsft:\aA(\lambda,T)[[\hbar]] \to
  \aA(\lambda,T)[[\hbar]]$ is defined by counting solutions to a
sufficiently small abstract perturbation of the $J$-holomorphic curve
equation, there is no $Q \in \aA(\lambda,T)[[\hbar]]$ such that
$$
\Dsft(Q) = \hbar^K + \Order(\hbar^{K+1}).
$$
\end{lemma}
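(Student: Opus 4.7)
The plan is to argue by contradiction. Suppose $Q = \sum_{j \geq 0} Q_j \hbar^j \in \aA(\lambda,T)[[\hbar]]$ satisfies $\Dsft(Q) = \hbar^K + \Order(\hbar^{K+1})$. Since $\Dsft = \sum_{k \geq 1} D_k \hbar^{k-1}$, the coefficient of $\hbar^K$ in $\Dsft(Q)$ equals
$$
\sum_{k=1}^{K+1} D_k(Q_{K+1-k}),
$$
and its projection to the ground ring $R_\rR \subset \aA$ (i.e.~the coefficient of the unit $1 \in \aA$) must equal~$1$.

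The first step is to express this projection as a weighted sum of $J$-holomorphic curve counts. Writing each $Q_{K+1-k}$ as a sum of monomials $Q_I = q_{\gamma_1^I}\cdots q_{\gamma_{d_I}^I}$ and using the explicit formula for $D_k$, the ground-ring part of $D_k(Q_I)$ receives contributions only from the terms in the expansion of $D_k$ with $\Gamma_- = \emptyset$, multiset of positive asymptotics $\Gamma_I$, and genus $g = k - d_I$. Each such contribution is a combinatorial factor (built from the covering multiplicities $\kappa$ and symmetric group orders) times the count $n_{k-d_I}(\emptyset,\Gamma_I,d)\, z^d$ of index-$1$ holomorphic curves with no negative ends, $r = d_I$ positive ends at $\Gamma_I$, genus $g = k - d_I$, and homology class~$d$. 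Because $k = K+1 - j \leq K+1$ and $g + r = k$, every count that appears concerns a curve with $g + r \leq K+1$, and hence lies in the range addressed by hypotheses~(2) and~(3).

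The second step is to invoke the hypotheses to conclude that each such count vanishes. By hypothesis~(2), for every such $(g,r)$ the compactified moduli space $\overline{\mM}_{g,r}^1(J;T)$ is a finite disjoint union of Fredholm regular smooth curves, with no multi-level or nodal degenerations. Standard properties of the abstract perturbation scheme then guarantee that, for sufficiently small perturbation, each regular curve contributes exactly one signed perturbed solution and no new perturbed solutions arise nearby; thus the perturbed counts entering $\Dsft$ coincide configuration-by-configuration with the unperturbed counts. By hypothesis~(3), these vanish. Substituting back shows the ground-ring part of $[\hbar^K]\Dsft(Q)$ is zero, contradicting its assumed value of~$1$.

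The main subtlety—and the one point deserving careful treatment—is the interpretation of hypothesis~(3): the vanishing of the algebraic count of $\overline{\mM}_{g,r}^1(J;T)$ must be understood as holding separately for each individual choice of asymptotic orbits and homology class~$d$, since otherwise one cannot conclude the vanishing of each summand $c_I \cdot n_g(\emptyset, \Gamma_I, d)$ in the expression from the first step. In the applications of the lemma in \S\ref{sec:higherOrder}, this refined vanishing will be verified via the correspondence between holomorphic cylinders and Morse gradient flow lines established in the preceding subsection, where the relevant counts reduce to Morse-theoretic contributions that manifestly vanish for each asymptotic configuration (e.g.~because a distinguished generator is closed for the Morse differential).
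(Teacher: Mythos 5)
Your proof is correct and takes essentially the same approach as the paper's own (terser) argument: you expand the ground-ring part of the $\hbar^K$-coefficient of $\Dsft(Q)$ into counts of index-$1$ curves without negative ends satisfying $g+r\le K+1$, identify perturbed and unperturbed counts via hypothesis~(2) and the abstract perturbation framework, and invoke hypothesis~(3) to conclude these all vanish. Your remark about the per-configuration (i.e.\ per choice of asymptotics, and per homology class in the twisted case) interpretation of hypothesis~(3) is a valid and necessary refinement for the argument to work for arbitrary $Q$, and it is indeed the version implicitly used in the paper and established in the application (Lemma~\ref{lem:nocurves}), where the relevant counts reduce to individual coefficients of the Morse (co)differential that vanish because the distinguished minimum and maximum are (co)cycle generators.
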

\begin{proof}
We begin by observing that since all the buildings 
in $\overline{\mM}_{g,r}^1(J ; T)$ are smooth
Fredholm regular curves, the count of the corresponding moduli space of
solutions under any suitable abstract perturbation will remain~$0$
(cf.~Remark~\ref{remark:abstract}).

Recall now that $\Dsft$ has an expansion $\Dsft= \sum D_\ell \hbar^\ell$
in powers of $\hbar$, where $D_\ell$ counts (perturbed) holomorphic
curves whose genus and number of positive punctures add up to $\ell$.
The assumption (3) now guarantees that, for every $Q \in
\aA(\lambda,T)$ each term of $D_\ell(Q)$ with $\ell \leq K$ contains
at least one  $q$-variable. 
So if $Q \in \aA(\lambda,T)[[\hbar]]$ is arbitrary, we can write its
differential uniquely as
$$
\Dsft(Q) = P + \Order(\hbar^{K+1}),
$$
with $P$ a polynomial of degree at most $K$ in $\hbar$ whose
nontrivial terms each contain at least one $q$-variable. This
establishes the claim.
\end{proof}

We now fix one of our specific examples $(V_g,\xi_k)$.  The two
sides $\Surface_+$ and $\Surface_-$ of~$\Surface$ are then both connected, so we can
choose each of the functions $h_\pm : \Surface_\pm \to \RR$ to have a unique
local minimum; in this case $h_\epsilon : \Surface \to \RR$ for $\epsilon > 0$
has a unique index~$0$ critical point in~$\Surface_-$ and a unique index~$2$ 
critical point in~$\Surface_+$.
Recall that for
any $\epsilon \in (0,\epsilon_0]$, Proposition~\ref{prop:contactForm}
gives an exact symplectic cobordism  
$$
([\epsilon,\epsilon_0] \times (S^1 \times \Surface) , d(e^s\lambda_s))
$$
relating the contact forms $e^\epsilon\lambda_\epsilon$ and
$e^{\epsilon_0}\lambda_{\epsilon_0}$. 
Then for any sufficiently $C^\infty$-small function 
$F_\epsilon: S^1 \times \Surface \to \RR$, the subdomain
$$
X_\epsilon := \{ (s,m) \in \RR \times (S^1 \times \Surface) \ |\ 
\epsilon + F_{\epsilon}(m) \le s \le \epsilon_0 \}
$$
gives an exact symplectic cobordism between
$e^{\epsilon_0}\lambda_{\epsilon_0}$ and $e^\epsilon \lambda_\epsilon'$,
where $\lambda_\epsilon'$ is the perturbed contact form
$$
\lambda_{\epsilon}' := e^{F_\epsilon} \lambda_\epsilon.
$$
By Prop.~\ref{periodsAndIndices}, $\lambda_\epsilon$ has nondegenerate
orbits up to period~$T_\epsilon$ except in $S^1 \times \uU$, thus one
can choose a generic $C^\infty$-small function $F_\epsilon$ with compact support
in $S^1 \times \uU$ so that $\lambda_\epsilon'$ has \emph{only} nondegenerate
orbits up to period~$T_\epsilon$ (the fact that generic perturbations
in an open subset suffice follows from the appendix of
\cite{AlbersBramhamWendl}).  Choose a corresponding complex
structure $J_\epsilon'$ on the perturbed contact structure
$\xi_\epsilon' := \ker\lambda_\epsilon'$ such that $J_\epsilon'$ is
$C^\infty$-close to~$J_\epsilon$.  The proof of
Theorem~\ref{thm:higherOrder} now rests on the following observation.
\begin{lemma}
\label{lem:nocurves}
The assumptions of Lemma~\ref{lemma:algHyperTorsionFree} are
satisfied with $\lambda = \lambda_\epsilon'$, $J = J_\epsilon'$,
$T = T_\epsilon$ and $K = k-2$.
\end{lemma}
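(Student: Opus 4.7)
The plan is to verify the three hypotheses of Lemma~\ref{lemma:algHyperTorsionFree} in turn, exploiting the foliation and index estimates of the preceding subsection together with an SFT-compactness/Fredholm-regularity argument.

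Condition~(1) is built into the construction of $\lambda_\epsilon'$: by Proposition~\ref{periodsAndIndices}, the only orbits of $\lambda_\epsilon$ of period less than~$T_\epsilon$ that can fail to be nondegenerate lie in the Morse--Bott region $S^1\times\uU$, and since $F_\epsilon$ was chosen as a generic $C^\infty$-small function supported in $S^1\times\uU$, the genericity result in the appendix of \cite{AlbersBramhamWendl} guarantees that all orbits of $\lambda_\epsilon'$ of period less than~$T_\epsilon$ are nondegenerate.

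For condition~(2), I first carry out the classification for the unperturbed pair $(\lambda_\epsilon,J_\epsilon)$ and transfer it to $(\lambda_\epsilon',J_\epsilon')$ by SFT compactness together with Fredholm regularity. In a $J_\epsilon$-holomorphic building of total Fredholm index~$1$ with no overall negative ends, each component has nonnegative index (Propositions~\ref{prop:covers} and~\ref{prop:connectors}), so exactly one component (the \emph{main} curve) has index~$1$ and all others are index-$0$ connectors. The bottom level has no negative ends; by Proposition~\ref{prop:allCylinders}, any bottom-level component whose positive ends lie at orbits of the form $\gamma_z^n$ is a cover of some $u_x$, and hence has index~$\ge 1$. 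Components with positive ends at the nondegenerate orbits born in $S^1\times\uU$ from perturbing the Morse--Bott family $S^1\times\Gamma$ are excluded by combining the integrable distribution $\RR\p_\phi\oplus\RR J_\epsilon\p_\phi$ of Proposition~\ref{prop:Jcomplex}---whose leaves are the $u_x$-cylinders and the trivial cylinders over the $\gamma_z$, none of which is asymptotic to an interface orbit---with the intersection-theoretic results of Lemmas~\ref{lemma:homotopy} and~\ref{lemma:asympInt}. This forces the main curve to occupy the bottom level as an unbranched cover $u_x^n$ with $x$ of type~$4$ or~$5$, whose two positive ends are asymptotic to \emph{distinct} simple orbits $\gamma_{z_-}^n$ and $\gamma_{z_+}^n$. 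An index-$0$ elliptic connector above must cover a single base orbit, so it cannot merge the two strands; hence no nontrivial stacking occurs and the building is a single smooth curve, Fredholm regular by Proposition~\ref{prop:regular} and finite in number by the period cut-off. By the implicit function theorem these curves persist uniquely to smooth $(\lambda_\epsilon',J_\epsilon')$-holomorphic curves, and no additional ones arise. Since each such building has $g+r=2$, the moduli is empty unless $r=2$, in particular when $k=2$.

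Condition~(3) now reduces to a Morse-theoretic computation. By Remark~\ref{remark:orientations}, the coherent orientations on the moduli of $u_x^n$-cylinders can be chosen compatibly with the corresponding orientations on the spaces of Morse gradient flow lines of $h_\epsilon$, so for any pair $(z_-,z_+)$ of critical points and any multiplicity $n\le N_\epsilon$, the algebraic count of curves asymptotic to $(\gamma_{z_-}^n,\gamma_{z_+}^n)$ equals the matrix coefficient of the Morse differential $\p^{\mathrm{Morse}}$ on the closed connected surface~$\Surface$ between $z_+$ and $z_-$. For type-$4$ asymptotics, $z_-=z_-^{\min}$ is the unique index-$0$ critical point, so the relevant coefficient in $\p\langle z_+\rangle$ vanishes because $H_0(\Surface)=\ZZ$ forces $\im(\p\colon C_1\to C_0)=0$. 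For type-$5$ asymptotics, $z_+=z_+^{\max}$ is the unique index-$2$ critical point, so the relevant coefficient in $\p\langle z_+^{\max}\rangle$ vanishes because $H_2(\Surface)=\ZZ$ forces $\p\langle z_+^{\max}\rangle=0$. Every specific asymptotic choice thus contributes zero, completing the verification of~(3). The main obstacle I anticipate is in step~(2): cleanly excluding buildings whose components have ends on the new nondegenerate orbits created by $F_\epsilon$, where Proposition~\ref{prop:allCylinders} does not apply verbatim and one must rely on the $S^1$-invariant foliation picture together with the intersection-theoretic machinery of Lemmas~\ref{lemma:homotopy}--\ref{lemma:asympInt}.
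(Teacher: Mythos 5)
Your plan for conditions~(1) and~(3) matches the paper: (1) follows from the genericity result in the appendix of \cite{AlbersBramhamWendl} applied to $F_\epsilon$ supported in $S^1\times\uU$, and (3) is exactly the Morse-(co)homology argument using that $\Surface$ is closed and connected with unique index-$0$ and index-$2$ critical points.

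The genuine gap is in condition~(2), precisely where you flag uncertainty, and it is fatal rather than a loose end. You never invoke the hypothesis $g + r \le K+1 = k-1$ when ruling out asymptotics in $S^1\times\uU$, and without it the statement you are trying to prove is simply false: the pages of the summed open book (the curves $u^{(i)}_{\sigma,\tau}$ from Proposition~\ref{prop:openbook2}, or equivalently the planar pages in $S^1\times\Surface_-$) are genuine index-$2$ $J_\epsilon$-holomorphic curves with no negative ends and all positive ends at interface orbits, and they are disjoint from the leaves $u_x$ and $\RR\times\gamma_z$. Your proposed exclusion via ``the integrable distribution $\RR\p_\phi\oplus\RR J_\epsilon\p_\phi$ together with Lemmas~\ref{lemma:homotopy} and~\ref{lemma:asympInt}'' therefore cannot work as stated: those lemmas concern intersections with the $u_x$-foliation and covers of $\gamma_z$, and a curve with ends on interface orbits need not intersect that foliation at all. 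The paper's actual argument is purely topological and crucially uses the cap $r\le k-1$ on the number of positive ends: since every closed Reeb orbit of $\lambda_\epsilon$ in $S^1\times\uU$ projects to a positive multiple of a component of $\Gamma$ oriented as $\p\Surface_+$, and the asymptotics at $\gamma_z^n$ project to points, the projection of a building $u$ to $\Surface$ gives a nullhomology of this multicurve; but with only $\le k-1$ ends and $k$ components of $\Gamma$, at least one interface component carries no asymptotics, and a transverse loop through that gap has nonzero algebraic intersection with the remaining projected asymptotics, a contradiction.

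Two smaller points. First, the paper carries out the entire classification for the unperturbed pair $(\lambda_\epsilon, J_\epsilon)$, where the interface tori are still Morse--Bott; only after showing that every relevant building is a Fredholm regular cylinder $u_x^n$ does it pass to $(\lambda_\epsilon',J_\epsilon')$ by SFT compactness plus regularity (no new buildings can appear because there were no nontrivial $J_\epsilon$-buildings to degenerate to). Your presentation interleaves the two in a way that obscures this. Second, your closing sentence about ``$g+r=2$ so the moduli is empty unless $r=2$, in particular when $k=2$'' is not quite a coherent argument; the point is simply that the classification shows every building in $\overline{\mM}^1_{g,r}(J;T)$ with $g+r\le k-1$ is a cylinder $u_x^n$ with two positive ends, hence these spaces are empty for $g+r\le 1$ and consist of regular cylinders with vanishing signed count for $g+r=2$.
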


\begin{proof}
It will turn out that it suffices to count holomorphic buildings for the
unperturbed structure $J_{\epsilon}$, so to start with,
suppose~$u$ is an index~$1$ $J_{\epsilon}$-holomorphic building in 
$\RR \times (S^1 \times \Surface)$
with no negative ends and at most $k-1$ positive ends, asymptotic to
orbits whose periods add up to less than~$T_{\epsilon}$.  We claim that~$u$
is then a smooth curve (with only one level and no nodes), and is a
cylinder of the form $u_x^n$ for some gradient flow line
$x : \RR \to \Surface$ of $h_{\epsilon}$ and $n \le N_{\epsilon}$.
Indeed, we start by arguing that none of the asymptotic orbits 
of~$u$ can lie in the region $S^1 \times \uU$.  By
Proposition~\ref{periodsAndIndices}, all asymptotic orbits of~$u$ outside
this region are of the form $\gamma_z^n$ for $z \in \Crit(h_\epsilon)$,
and thus have trivial projections to~$\Surface$.  Moreover,
all closed Reeb orbits in $S^1 \times \uU$ project to $\uU$ as closed
curves homologous to some positive multiple of a component
of $\Gamma$, oriented as  boundary of $\Surface_+$.  It follows that
the projection of $u$ to
$\Surface$ provides a homology from the sum of these curves to zero. Since there
are $k$ components of $\Gamma$, but only at most $k-1$ ends 
of $u$,
there is at least one component of $S^1 \times \uU$ which does not
contain any asymptotics of~$u$. Using this interface component, it is
easy to construct a closed curve on $\Surface$ which has nonzero intersection
number with the projected asymptotics of $u$ in $\uU \subset \Surface$, proving that 
the sum cannot be homologous to zero. This contradiction proves our claim that
none of the asymptotics can lie in $S^1 \times \uU$. 

Now Proposition~\ref{periodsAndIndices} implies that all the asymptotic orbits
of~$u$ are of the form $\gamma_z^n$ for $z \in \Crit(h_{\epsilon})$ and
$n \le N_{\epsilon}$.  Proposition~\ref{prop:allCylinders} then implies
that every component curve in the levels of~$u$ is one of the following:
\begin{enumerate}
\item
A cover of a trivial cylinder $\RR\times \gamma_z$ for some
$z \in \Crit(h_{\epsilon})$.
\item
A cover of the cylinder $u_x$ for some gradient flow line $x : \RR \to \Surface$
of~$h_{\epsilon}$, connecting critical points of $h_\epsilon$
  on opposite sides of $\Gamma$.
\end{enumerate}
By Proposition~\ref{prop:connectors},
all curves of the first type have nonnegative index.
Proposition~\ref{prop:covers} implies in turn that all curves of the 
second type have index at least~$1$, and there must be at least one
such curve since~$u$ has no negative ends.  Since $\ind(u)=1$, it follows
that~$u$ contains exactly one curve of the second type, which 
is an unbranched cover $u_x^n$ for some gradient flow line~$x$
and $n \le N_{\epsilon}$, and all components of~$u$ that are covers of
trivial cylinders have exactly one positive end.  Combinatorially,
this is only possible if~$u$ has precisely one nontrivial connected
component, which is of the form $u_x^n$.

By Prop.~\ref{prop:regular}, the curves $u_x^n$ are all Fredholm regular,
thus they will all survive the small perturbation of $J_{\epsilon}$
to $J_{\epsilon}'$; in fact the lack of nontrivial 
$J_{\epsilon}$-holomorphic buildings means that no additional
$J_{\epsilon}'$-holomorphic buildings can appear under this perturbation.
Thus it will suffice to show that the algebraic count of the
$J_{\epsilon}$-holomorphic cylinders $u_x^n$ for $n \le N_{\epsilon}$
is zero.  For this, choose a system of coherent
orientations for the gradient flow lines of $h_{\epsilon}$, and a
corresponding system of orientations for the moduli spaces of 
$J_{\epsilon}$-holomorphic curves (see Remark~\ref{remark:orientations}).  
The relevant count of
holomorphic curves is then the same as a certain count of gradient flow
lines: we are interested namely in all index~$1$ holomorphic cylinders
$u_x^n$ for which both ends are positive, and these correspond to the
gradient flow lines~$x$ that pass through~$\Gamma$ and connect an
index~$1$ critical point on one side to an index~$0$ or~$2$ critical point
on the other.  Consider in particular the set of all gradient flow
lines that connect the unique index~$2$ critical point $z_+ \in \Surface_+$ 
to any index~$1$ critical point in~$\Surface_-$.  The count of these flow lines 
calculates part of the differential $\p \langle z_+ \rangle$ in the
Morse homology of~$\Surface$, but since there is no other critical point of
index~$2$, $\langle z_+ \rangle$ is necessarily closed in Morse homology,
implying that the relevant algebraic count of flow lines is zero.
Applying the same argument to the unique index~$0$ critical point
in~$\Surface_-$ using Morse cohomology, we find indeed that the algebraic count
of cylinders $u_x^n$ with two positive ends for any $n \le N_{\epsilon}$
vanishes.
\end{proof}

\begin{remark}
The preceding result also establishes the conditions of
Proposition~\ref{prop:ECHlowerBound} in the appendix, thus
implying the lower bound stated in Theorem~\ref{thm:ECH}.
\end{remark}

\begin{proof}[Proof of Theorem~\ref{thm:higherOrder}]
In light of Theorem~\ref{thm:torsion}, it remains to show that
$[\hbar^{k-2}]$ does not vanish in $H_*^\SFT(V_g,\xi_k)$.

We will argue by contradiction and suppose $\hbar^{k-2}$ vanishes
in $H^\SFT_*(V_g,\xi_k)$. Choose a nondegenerate contact form
$\lambda$ such that there is a topologically trivial cobordism $X$
with positive end $(V_g,\lambda)$ and negative end
$(V_g,e^{\epsilon_0}\lambda_{\epsilon_0})$. Choose all necessary
data to define $\Dsft$ on $\aA(\lambda)[[\hbar]]$ such that it
computes $H^\SFT_*(V_g,\xi_k)$. In particular, there exists $Q \in
\aA(\lambda)[[\hbar]]$ such that
$$
\Dsft(Q) = \hbar^{k-2}.
$$
Writing $Q=Q_1 + \Order(\hbar^{k-1})$, we find a polynomial $Q_1$ of degree at
most $k-2$ in $\hbar$ with the property that 
$$
\Dsft(Q_1) = \hbar^{k-2} + \Order(\hbar^{k-1}).
$$
Note that since $Q_1$ is a polynomial in the $q$-variables, there
exists some $T>0$ such that in fact $Q_1 \in \aA(\lambda,T)[[\hbar]]$. 

Now choose $\epsilon>0$ so small that $e^\epsilon T_\epsilon>T$. 
Gluing the cobordism $X_\epsilon$ constructed above to $X$, we obtain
an exact cobordism with positive end $(V_g,\lambda)$ and negative end
$(V_g,e^\epsilon\lambda_{\epsilon}')$ which according to
Remark~\ref{rem:truncation} gives rise to a chain map,
$$
\Phi_T:(\aA(\lambda,T)[[\hbar]],\Dsft) \to
(\aA(\lambda_{\epsilon}', e^{-\epsilon}T)[[\hbar]],\Dsft),
$$
where the right hand side admits the obvious inclusion into
$(\aA(\lambda_\epsilon', T_\epsilon)[[\hbar]],\Dsft)$.
But then $\Dsft\Phi_T(Q_1) = \Phi_T \Dsft(Q_1) = \hbar^{k-2} +
\Order(\hbar^{k-1})$, which contradicts
Lemmas~\ref{lemma:algHyperTorsionFree} and~\ref{lem:nocurves}. This
contradiction shows that $\hbar^{k-2}$ cannot vanish in
$H_*^\SFT(V_g,\xi_k)$, completing the proof of the theorem.
\end{proof}

\begin{remark}
\label{remark:Jeremy2}
We conclude this section by giving the rough idea of how to
construct the exact cobordisms with positive end
$(V_{g+1},\xi_{k+1})$ and negative end $(V_{g},\xi_{k})$ 
alluded to in Remark~\ref{remark:Jeremy1}; this was explained
to us by J.~Van Horn-Morris. First observe that
if $V_g = S^1 \times \Surface$ with $\Surface = \Surface_+ \cup_\Gamma \Surface_-$ and
$V_{g+1} = S^1 \times \Surface'$ with $\Surface' = \Surface_+' \cup_{\Gamma'} \Surface_-'$, then
one can transform the former to the latter by picking two distinct points
$p_-,p_+$ in the same connected component of~$\Gamma$ and attaching
$2$-dimensional $1$-handles $\handle := \DD^1 \times \DD^1$ along the
corresponding points in both $\p \Surface_+$ and $\p \Surface_-$, 
producing $\Surface_+'$ and $\Surface_-'$ respectively with a preferred orientation
reversing diffeomorphism $\p \Surface_+' \to \p \Surface_-'$.  A Stein cobordism
between $(V_g,\xi_k)$ and $(V_{g+1},\xi_{k+1})$ is then constructed by
``multiplying the handle attachment by an annulus''.  More precisely,
we define the two Legendrian loops 
$\ell_\pm = S^1 \times \{p_\pm\} \subset V_g$,
and attach to these a $4$-dimensional \emph{round $1$-handle}
$$
\widehat{\handle} := \handle \times [-1,1] \times S^1 \cong
\DD^1 \times \left( \DD^2 \times S^1 \right)
$$
with boundary
$$
\p\widehat{\handle} = \p_-\widehat{\handle} \cup \p_+\widehat{\handle} :=
\Big(\p \DD^1 \times \left( \DD^2 \times S^1 \right) \Big) \cup
\Big( \DD^1 \times \p\left( \DD^2 \times S^1 \right) \Big).
$$
This produces a smooth cobordism from $V_g$ to $V_{g+1}$, and one can
make it into a Stein cobordism by regarding $\widehat{\handle}$ as an 
``$S^1$-invariant Weinstein handle'', with a Morse-Bott plurisubharmonic 
function with critical set $\{(0,0)\} \times S^1$, isotropic unstable
manifold $\DD^1 \times \{0\} \times S^1$ and coisotropic stable
manifold $\{0\} \times \DD^2 \times S^1$.  Perturbing the Morse-Bott
function to a Morse function with critical points of index~$1$ and~$2$
along $\{(0,0)\} \times S^1$,
one sees that the same cobordism can be obtained by attaching a
combination of standard Stein $1$-handles and $2$-handles.  One can then
use open book decompositions \cite{Vanhorn:private} to show that the 
resulting contact structure on $V_{g+1}$ is the one determined by 
the dividing curves $\Gamma' \subset \Surface'$.
\end{remark}

\section{Outlook}
\label{sec:open}

We close by mentioning a few 
questions that arise from the results of this paper.

As shown in the appendix, algebraic torsion in dimension three seems to be
closely related to the ECH contact invariant; indeed, all of our examples
are contact manifolds for which the latter vanishes, and 
they exhibit a correspondence between the minimal
order of algebraic torsion and the integers $f$ and $f_{\text{simp}}$
defined by Hutchings. It is unclear however whether
a precise relationship between these invariants exists in general, as
SFT counts a much larger class of holomorphic curves than ECH.

It is presumably also
possible to define a corresponding invariant in Heegaard Floer homology,
but the latter is apparently still unknown.

\begin{question}
Is there a Heegaard Floer theoretic contact invariant that implies 
obstructions to Stein cobordisms between pairs of contact $3$-manifolds whose
Ozsv\'ath-Szab\'o invariants vanish?
\end{question}

\begin{remark}
There is an obvious Stein cobordism obstruction in Heegaard Floer homology,
defined in terms of the largest integer~$k \ge 1$ for which the contact
invariant is in the image of the $k$th power of the so-called $U$-map.
(Note that one could define an exact cobordism obstruction in ECH
in precisely the same way.)  Nontrivial examples of this obstruction have been
computed by 
Karakurt \cite{Karakurt:Stein}.
Interestingly, since this invariant is only really interesting in cases
where the contact invariant is \emph{nonvanishing}, Karakurt's results
are completely disjoint from ours.
\end{remark}

In contrast to ECH or Heegaard Floer homology, SFT is also well defined
in higher dimensions, and it remains to find interesting examples beyond
the $0$-torsion examples that are known from 
\cites{BourgeoisNiederkrueger:PS,BourgeoisVanKoert}.  
Some candidates arise in \cite{MassotNiederkruegerWendl}:
in particular, the authors define a higher-dimensional generalization of
Giroux torsion which obstructs strong fillability and conjecturally
implies algebraic $1$-torsion.  They also find examples of contact forms
in all dimensions that have this form of torsion but don't admit any
contractible Reeb orbits, implying
there is no algebraic $0$-torsion, and in some cases the examples are also
known to be weakly (and hence stably) fillable, implying that they
do not have any fully twisted algebraic torsion.

\begin{conju}
For all integers $k \ge 1$ and $n \ge 2$, there exist infinitely many
closed $(2n-1)$-dimensional contact manifolds that have algebraic torsion
of order~$k$ but not $k-1$.  There also exist $(2n-1)$-dimensional
contact manifolds that have (untwisted) algebraic $k$-torsion but admit stable
symplectic fillings.
\end{conju}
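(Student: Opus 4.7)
The first part, that $(V_g,\xi_k)$ has algebraic $(k-1)$-torsion, follows immediately from Theorem~\ref{thm:torsion}. Indeed, the summed open book description of $(V_g,\xi_k)$ given just below the statement exhibits it as a closed planar $(k-1)$-torsion domain: the irreducible piece with genus-$0$ pages and $k$ binding circles is the planar piece, while the piece with positive genus serves as padding, and the summed open book is not symmetric since the two pieces have pages of different genera. Thus planar $(k-1)$-torsion, and hence algebraic $(k-1)$-torsion, holds.

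For the harder statement that $\hbar^{k-2}$ does \emph{not} vanish in $H_*^\SFT(V_g,\xi_k)$, the plan is to build an explicit model of $(V_g,\xi_k) \cong (S^1 \times \Surface, \xi_\Gamma)$ with a family of contact forms $\lambda_\epsilon$, an associated complex structure $J_\epsilon$, and an action truncation $T_\epsilon \to \infty$ in which the relevant holomorphic curves in $\RR \times (S^1 \times \Surface)$ can be completely classified, so that Lemma~\ref{lemma:algHyperTorsionFree} applies with $K = k-2$. First I would use Lemma~\ref{lem:conf} to choose subharmonic Morse potentials $h_\pm$ on $\Surface_\pm$ with unique extrema in the interior ($h_+$ with a unique index-$2$ critical point, $h_-$ with a unique index-$0$ critical point), and glue them together through a collar $S^1 \times \uU$ around $\Gamma$ to produce a smooth Giroux form $\lambda_\epsilon$ as in \eqref{eqn:lambdaEpsilon}. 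By Proposition~\ref{prop:contactForm} and Proposition~\ref{periodsAndIndices}, for small $\epsilon$ every closed Reeb orbit of action less than $T_\epsilon$ either lies in the collar or is an iterate $\gamma_z^n$ of the simple orbit over a critical point $z \in \Crit(h_\epsilon)$, with $\muCZ(\gamma_z^n) \in \{0,1\}$ given by \eqref{eqn:CZindex}.

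Next I would lift the gradient flow lines of $h_\epsilon$ to $J_\epsilon$-holomorphic cylinders $u_x^n$ (Proposition~\ref{prop:Jcomplex}), all of which are Fredholm regular by Proposition~\ref{prop:regular}, and use Siefring's intersection theory together with the asymptotic winding computation of Lemma~\ref{lemma:asympInt} to show that any other finite energy curve with positive ends among the $\gamma_z^n$ must be a cover of some $u_x$ or of a trivial cylinder (Proposition~\ref{prop:allCylinders}). Propositions~\ref{prop:covers} and~\ref{prop:connectors} then give sharp lower bounds on Fredholm indices of such covers. The key combinatorial input, carried out in Lemma~\ref{lem:nocurves}, is that an index-$1$ $J_\epsilon$-holomorphic building with no negative ends and at most $k-1$ positive ends of total action less than $T_\epsilon$ cannot have any asymptotic orbits inside the collar $S^1 \times \uU$: the projection of the asymptotics to $\Surface$ would be a positive linear combination of the $k$ components of $\Gamma$, but with only $k-1$ available ends at least one component of $\Gamma$ must be missed, contradicting that the projected curve bounds. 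Hence all asymptotics are $\gamma_z^n$, and the only possibility left is a single unbranched cylinder $u_x^n$ between critical points on opposite sides of $\Gamma$. Matching coherent orientations to Morse orientations as in Remark~\ref{remark:orientations}, the algebraic count of such cylinders equals the Morse count of gradient trajectories from the unique index-$2$ critical point (or into the unique index-$0$ critical point) to an index-$1$ critical point of opposite type, which vanishes since these extrema are closed generators of Morse (co)homology.

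The final step is to upgrade this to a genuine nonvanishing statement for SFT. I would perturb $\lambda_\epsilon$ to $\lambda_\epsilon' = e^{F_\epsilon}\lambda_\epsilon$ with $F_\epsilon$ supported in $S^1 \times \uU$, achieving full nondegeneracy of orbits up to action $T_\epsilon$ without introducing new low-energy holomorphic buildings (the $u_x^n$, being regular, persist). This verifies the hypotheses of Lemma~\ref{lemma:algHyperTorsionFree} with $K=k-2$ for the truncated complex $(\aA(\lambda_\epsilon',T_\epsilon)[[\hbar]],\Dsft)$. Then, assuming for contradiction that $\Dsft(Q)=\hbar^{k-2}$ in some computation of $H^\SFT_*(V_g,\xi_k)$ using a reference form $\lambda$, I would truncate $Q$ to a polynomial piece $Q_1 \in \aA(\lambda,T)[[\hbar]]$ with $\Dsft(Q_1) = \hbar^{k-2} + \Order(\hbar^{k-1})$, glue an exact cobordism to $(V_g,e^\epsilon\lambda_\epsilon')$ with $\epsilon$ chosen so small that $e^\epsilon T_\epsilon > T$, and apply the induced truncated chain map (Remark~\ref{rem:truncation}) to produce the same relation in the forbidden complex, contradicting Lemma~\ref{lemma:algHyperTorsionFree}. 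The main obstacle will be the classification step inside Lemma~\ref{lem:nocurves}: reconciling the asymptotic intersection bounds, the Fredholm index inequalities for branched covers of the $u_x$ and of the trivial cylinders, and the homological exclusion of collar asymptotics into a single coherent argument that fully enumerates index-$1$ buildings with up to $k-1$ positive ends and then recognizes the count as a Morse-theoretic boundary of a closed class.
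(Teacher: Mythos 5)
The statement you were asked to prove is a \emph{Conjecture} from the paper's Outlook section (\S\ref{sec:open}), not a theorem: the authors pose it precisely because it is open, and the paper provides no proof of it. Your argument is essentially a correct summary of the proof of Theorem~\ref{thm:higherOrder} (via Propositions~\ref{prop:contactForm}--\ref{prop:connectors}, Lemmas~\ref{lemma:algHyperTorsionFree} and~\ref{lem:nocurves}, and the truncation trick from Remark~\ref{rem:truncation}). That theorem shows that the $3$-dimensional family $(V_g,\xi_k)=(S^1\times\Sigma,\xi_\Gamma)$ has algebraic torsion of order exactly $k-1$, and taking $(V_g,\xi_{k+1})$ for varying $g$ does give infinitely many $3$-manifolds with torsion of order $k$ but not $k-1$. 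This settles only the case $n=2$ of the first sentence.

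The conjecture, however, asserts the existence of such manifolds in \emph{every} odd dimension $2n-1\ge 3$. All of your examples are $3$-dimensional, and the machinery your sketch relies on is specifically $3$-dimensional: blown-up summed open books with surface pages, gradient flows of Morse functions on surfaces lifted to foliations by holomorphic cylinders, Siefring's intersection theory and the winding number bound of \cite{HWZ:props2} (both $4$-dimensional), and the automatic transversality criterion of \cite{Wendl:automatic}. None of these transplant to higher dimensions, which is exactly why the statement is left as a conjecture. You also do not address the second sentence at all, which asks for $(2n-1)$-dimensional contact manifolds with untwisted algebraic $k$-torsion that are nevertheless stably fillable; for $n=2$, $k\ge 1$ this is witnessed by Giroux-torsion tori (weakly hence stably fillable, with algebraic $1$-torsion and hence $k$-torsion for all $k\ge 1$), but for $n>2$ the only candidates the paper mentions rest on the unproven implication, conjectured in \cite{MassotNiederkruegerWendl}, that higher-dimensional Giroux torsion gives algebraic $1$-torsion. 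In short, you have re-derived a known theorem rather than addressing the open higher-dimensional conjecture.
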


Finally, one wonders to what extent algebraic torsion might also give
obstructions to \emph{non-exact} cobordisms.  Results in 
\cite{Wendl:cobordisms} show that Corollary~\ref{cor:noExact} for instance
is false without the exactness assumption, and the reason is that a non-exact
cobordism between $(M^+,\xi^+)$ and $(M^-,\xi^-)$ does not in general
imply a morphism
$$
H_*^\SFT(M^+,\xi^+) \to H_*^\SFT(M^-,\xi^-).
$$
On the other hand, if $(M^+,\xi^+)$ has algebraic torsion, then
$(M^-,\xi^-)$ clearly cannot be fillable, and as was explained
in \S\ref{sec:SFT}, a non-exact
cobordism does give a map from $H_*^\SFT(M^+,\xi^+)$ to a suitably
twisted version of $H_*^\SFT(M^-,\xi^-)$, where the twisting is defined by
a count of holomorphic curves without positive ends in the cobordism.
It is however unclear whether 
the vanishing of $[\hbar^k]$ in this twisted SFT also implies a result for
the untwisted theory.  A promising class of
test examples is provided by the so-called \emph{capping} and \emph{decoupling}
cobordisms constructed in \cite{Wendl:cobordisms},
for which the holomorphic curves without positive ends can be enumerated 
precisely.

\begin{question}
If $(M^+,\xi^+)$ and $(M^-,\xi^-)$ are related by a non-exact symplectic
cobordism
and $(M^+,\xi^+)$ has algebraic torsion of some finite order, must
$(M^-,\xi^-)$ also have algebraic torsion of some (possibly
higher) finite order?
Is there a precise relation between these orders for the
capping/decoupling cobordisms constructed in \cite{Wendl:cobordisms}?
\end{question}


\begin{appendix}

\newcommand{\mc}[1]{{\mathcal #1}}
\newcommand{\eqdef}{\;{:=}\;}
\newcommand{\op}{\operatorname}
\newcommand{\N}{{\mathbb N}}
\newcommand{\R}{{\mathbb R}}
\renewcommand{\epsilon}{\varepsilon}
\newcommand{\Ker}{\op{Ker}}
\newcommand{\Z}{{\mathbb Z}}

\renewcommand{\thesection}{(by Michael Hutchings)}
\section{ECH analogue of algebraic $k$-torsion}
\renewcommand{\thesection}{\Alph{section}}

The purpose of this appendix is to define an analogue of algebraic
$k$-torsion in embedded contact homology (ECH).  Specifically, given a
closed oriented $3$-manifold $Y$, a nondegenerate contact form
$\lambda$ on $Y$, and an almost complex structure $J$ on $\R\times Y$
as needed to define the ECH chain complex, we define a number
$f(Y,\lambda,J)\in\N\cup\{\infty\}$, which is similar to the order of
algebraic torsion.  It is not known whether this number is an
invariant of the contact manifold $(Y,\xi=\Ker\lambda)$.  Nonetheless
this number, together with some variants thereof, can be used to
reprove some of the results on nonexistence of exact symplectic
cobordisms between contact manifolds that are proved in the main paper
using algebraic torsion.  In addition, the results in this appendix do
not depend on any unpublished work: in particular we do not use any
symplectic field theory or Seiberg-Witten theory here.

\subsection{Basic recollections about ECH}

We begin by recalling what we will need to know about the definition
of ECH.

Let $Y$ be a closed oriented $3$-manifold with a nondegenerate contact
form $\lambda$.  Let $R$ denote the Reeb vector field determined by
$\lambda$, and let $\xi=\Ker(\lambda)$ denote the corresponding
contact structure.  Choose a generic almost complex structure $J$ on
$\R\times Y$ such that $J$ is $\R$-invariant, $J(\partial_s)=R$ where
$s$ denotes the $\R$ coordinate, and $J(\xi)=\xi$, with
$d\lambda(v,Jv)\ge 0$ for $v\in\xi$.  To save verbiage below, we refer
to the pair $(\lambda,J)$ as {\em ECH data\/} for $(Y,\xi)$.  From
these data one defines the ECH chain complex $ECC(Y,\lambda,J)$ as
follows.

An {\em orbit set\/} is a finite set of pairs
$\alpha=\{(\alpha_i,m_i)\}$ where the $\alpha_i$'s are distinct
embedded Reeb orbits and the $m_i$'s are positive integers.  The
homology class of the orbit set $\alpha$ is defined by $[\alpha]\eqdef
\sum_im_i[\alpha_i]\in H_1(Y)$.  The orbit set $\alpha$ is called {\em
  admissible\/} if $m_i=1$ whenever $\alpha_i$ is hyperbolic (i.e.\
its linearized return map has real eigenvalues).  The ECH chain
complex is freely generated over $\Z$ by admissible orbit sets.

Now let $\alpha=\{(\alpha_i,m_i)\}$ and $\beta=\{(\beta_j,n_j)\}$ be
two orbit sets with $[\alpha] = [\beta] \in H_1(Y)$.

\begin{definition}
\label{def:M}
Define $\mc{M}_J(\alpha,\beta)$ to be the moduli space of
holomorphic curves $u:(\Sigma,j)\to(\R\times Y,J)$, where the domain
$\Sigma$ is a (possibly disconnected) punctured compact Riemann
surface, and $u$ has positive ends at covers of $\alpha_i$ with total
covering multiplicity $m_i$, negative ends at covers of $\beta_j$ with
total covering multiplicity $n_j$, and no other ends.  We consider two
such holomorphic curves to be equivalent if they represent the same
$2$-dimensional current in $\R\times Y$.
\end{definition}

Let $H_2(Y,\alpha,\beta)$ denote the set of relative homology classes
of $2$-chains in $Y$ with $\partial Y =
\sum_im_i\alpha_i-\sum_jn_j\beta_j$; this is an affine space over
$H_2(Y)$.  Any holomorphic curve $u\in\mc{M}_J(\alpha,\beta)$
determines a class $[u]\in H_2(Y,\alpha,\beta)$.  If $Z\in
H_2(Y,\alpha,\beta)$, define
\[
\mc{M}_J(\alpha,\beta,Z)=\{u\in\mc{M}_J(\alpha,\beta) \mid [u]=Z\}.
\]
Also the {\em ECH index\/} is defined by
\begin{equation}
\label{eqn:I}
I(\alpha,\beta,Z) \eqdef c_\tau(Z) + Q_\tau(Z) +
\sum_i\sum_{k=1}^{m_i}\op{CZ}_\tau(\alpha_i^k) -
\sum_j\sum_{k=1}^{n_j}\op{CZ}_\tau(\beta_j^k).
\end{equation}
Here $\tau$ is a trivialization of $\xi$ over the Reeb orbits
$\alpha_i$ and $\beta_j$;  $c_\tau(Z)$ denotes the relative first
Chern class of $\xi$ over $Z$ with respect to the boundary
trivializations $\tau$;  $Q_\tau(Z)$ denotes the relative
self-intersection pairing;  and $\op{CZ}_\tau(\gamma^k)$ denotes the
Conley-Zehnder index with respect to $\tau$ of the $k^{th}$ iterate of
$\gamma$.  These notions are explained in detail in \cites{Hutchings:index,
Hutchings:revisited}.
The ECH index of a holomorphic curve $u\in\mc{M}_J(\alpha,\beta)$ is defined by $I(u)\eqdef I(\alpha,\beta,[u])$.

We will need the following facts, which are proved in 
\cite{Hutchings:revisited}*{Thm.~4.15} and 
\cite{HutchingsSullivan:T3}*{Cor.~11.5}:

\begin{proposition}
\label{prop:I}
\ \\

\vspace{-12pt}
\begin{enumerate}
\item[(a)] If $u\in\mc{M}_J(\alpha,\beta)$ does not multiply cover any
  component of its image, then $\op{ind}(u)\le I(u)$, where $\op{ind}$
  denotes the Fredholm index.
\item[(b)]
If $J$ is generic and $u\in\mc{M}_J(\alpha,\beta)$, then:
\begin{itemize}
\item
$I(u)\ge 0$, with equality if and only if $u$ is $\R$-invariant (as a
current).
\item If $I(u)=1$, then $u=u_0\sqcup u_1$ where $u_1$ is embedded and
  connected, $\op{ind}(u_1)=I(u_1)=1$, and $u_0$ is $\R$-invariant (as
  a current).
\end{itemize}
\end{enumerate}
\end{proposition}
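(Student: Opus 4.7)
The plan is to prove part~(a) first via Siefring's asymptotic analysis and the adjunction formula for punctured holomorphic curves, and then to deduce part~(b) from~(a) plus standard transversality and a separate analysis of multiply covered curves.

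For part~(a), I would start from the Fredholm index formula
\[
\op{ind}(u) = -\chi(\Sigma) + 2c_\tau(u) + \sum_{\text{pos.\ ends}}\op{CZ}_\tau(\alpha_i^{k_i}) - \sum_{\text{neg.\ ends}}\op{CZ}_\tau(\beta_j^{l_j}),
\]
compare it term-by-term with the ECH index formula \eqref{eqn:I}, and control the difference using two ingredients. The first is the adjunction formula for somewhere-injective punctured holomorphic curves, which can be written (after choosing~$\tau$) as
\[
c_\tau(u) + Q_\tau(u) + w_\tau(u) = \chi(\Sigma) + 2\delta(u),
\]
where $w_\tau(u)$ is the total writhe of the asymptotic braids at the ends relative to~$\tau$ and $\delta(u)\ge 0$ counts singularities of the image in $\R\times Y$. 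The second is Siefring's writhe bound: at each end with multiplicity~$k$ covering an embedded Reeb orbit~$\gamma$, the writhe of the braid is bounded in terms of the $\op{CZ}_\tau(\gamma^j)$ for $j\le k$, and this bound (combined with the partition conditions) exactly produces the difference between $\sum_{j=1}^{k}\op{CZ}_\tau(\gamma^j)$ and $\op{CZ}_\tau(\gamma^k)$ with the correct sign. Substituting everything into $I(u) - \op{ind}(u)$, the $c_\tau$, $Q_\tau$, and Euler characteristic terms cancel against the adjunction identity, the writhe terms absorb the CZ discrepancy, and what remains is $2\delta(u)\ge 0$. This proves $\op{ind}(u)\le I(u)$. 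The main technical obstacle here is verifying the writhe inequality at each end in the exact form needed; this is the heart of the matter and is done carefully in \cite{Hutchings:revisited}.

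For part~(b), I would first treat the somewhere injective case: for generic~$J$, any somewhere injective $u$ satisfies $\op{ind}(u)\ge 0$ by standard Sard--Smale transversality \cite{Dragnev}, so (a) gives $I(u)\ge 0$. For a general $u\in\mc{M}_J(\alpha,\beta)$, decompose $u = \bigsqcup_k d_k v_k$ where the $v_k$ are distinct somewhere injective curves covered with multiplicity~$d_k$. The key step is then a separate \emph{covering inequality}: for any somewhere injective $v$ and $d\ge 1$,
\[
I(d v) \ge d\, I(v) \ge 0,
\]
with strict inequality unless $v$ is $\R$-invariant. This is proved by writing the ECH index of $d v$ directly from \eqref{eqn:I} in terms of the asymptotics and homology class of $v$, and estimating the difference $I(dv) - dI(v)$ using the concavity/convexity of the Conley--Zehnder index under iteration together with the Cauchy--Schwarz-type bound on $Q_\tau$. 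Summing over components gives $I(u)\ge 0$, with equality forcing each component to be a cover of an $\R$-invariant cylinder (and the components to be pairwise disjoint).

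For the case $I(u)=1$, I would argue as follows. From $I(u)=\sum_k I(d_kv_k) + (\text{nonnegative intersection terms})$ and the covering inequality, exactly one component $u_1 = d_1v_1$ has $I=1$ while the rest are $\R$-invariant covers (contributing $I=0$). The covering inequality forces $d_1=1$, so $u_1=v_1$ is somewhere injective. Then part~(a) combined with generic regularity forces $\op{ind}(u_1)=I(u_1)=1$ and $\delta(u_1)=0$, so $u_1$ is embedded. The main obstacle in this half is establishing the covering inequality with the correct equality conditions, but this is essentially combinatorial once the CZ iteration formulas are in hand; full details are in the proofs of \cite{Hutchings:revisited}*{Thm.~4.15} and \cite{HutchingsSullivan:T3}*{Cor.~11.5}.
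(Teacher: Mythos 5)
The paper does not actually prove this proposition; it is recalled as a black box with citations to \cite{Hutchings:revisited}*{Thm.~4.15} and \cite{HutchingsSullivan:T3}*{Cor.~11.5}, so there is no in-paper proof to compare against. Your outline for part~(a) has the right architecture — relative adjunction formula plus writhe bounds at the ends, with $I(u)-\op{ind}(u)$ identified with $2\delta(u)$ plus a nonnegative writhe defect — though the adjunction identity you wrote has the signs of $Q_\tau$, $w_\tau$ and $2\delta$ all flipped relative to the usual $c_\tau(u)=\chi(u)+Q_\tau(u)+w_\tau(u)-2\delta(u)$; worth double-checking your writhe sign convention.

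Part~(b), however, has a genuine gap. Your argument hinges on a ``covering inequality'' $I(dv)\geq d\,I(v)$ which you describe as ``essentially combinatorial once the CZ iteration formulas are in hand.'' It is not: unwinding the ECH index of the $d$-fold cover, the $Q_\tau$ term contributes $(d^2-d)Q_\tau(v)$ and the Conley--Zehnder iterate sums contribute terms that are easily negative for elliptic orbits, so the difference $I(dv)-d\,I(v)$ has no intrinsic sign as a bookkeeping quantity. The fact that multiple covers of nontrivial somewhere-injective components are forced to have $I\geq 2$ for generic~$J$ is a consequence of feeding the transversality bound $\op{ind}(v)\geq 1$ and the adjunction/writhe/partition machinery back into the index formula, and the cited references prove it that way rather than by a scaling inequality. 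Relatedly, your phrase ``summing over components gives $I(u)\geq 0$'' glosses over the fact that $I$ is not additive under disjoint union: there is a correction $2\,u_1\cdot u_2$ from intersections and asymptotic linking that must be shown nonnegative, and this is exactly where positivity of intersections enters the $I(u)\geq 0$ step (not only the $I(u)=1$ case, where you do acknowledge it). The standard proof decomposes $u$ into the trivial-cylinder part and the rest, bounds the multiple-cover contributions directly, and assembles the pieces via the super-additivity of $I$ with the linking correction; to make your sketch rigorous you would need to supply those lemmas rather than the asserted covering inequality.
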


The differential $\partial$ on the ECH chain complex is now defined as
follows:  If $\alpha$ is an admissible orbit set, then
\[
\partial \alpha \eqdef \sum_\beta \sum_{\{u\in\mc{M}_J(\alpha,\beta)/\R
 \, \mid \, I(u)=1\}} \varepsilon(u) \cdot \beta.
\]
Here the sum is over admissible orbit sets $\beta$ with
$[\alpha]=[\beta]$, and $\varepsilon(u)\in\{\pm1\}$ is a sign
explained in \cite{HutchingsTaubes:gluing2}*{\S 9}.  
The signs depend on some orientation
choices, but the chain complexes for different sign choices are
canonically isomorphic to each other.  It is shown in 
\cites{HutchingsTaubes:gluing1,HutchingsTaubes:gluing2}
that $\partial$ is well-defined and (what is much harder)
$\partial^2=0$.  The homology of the chain complex is the {\em
  embedded contact homology\/} $ECH(Y,\lambda,J)$.
Note that the empty set $\emptyset$ is a legitimate generator of the
ECH chain complex, and $\partial\emptyset=0$.  The homology class
$[\emptyset]\in ECH(Y,\lambda,J)$ is called the {\em ECH contact
  invariant\/}.

Taubes has shown that $ECH(Y,\lambda,J)$ is canonically isomorphic to
a version of Seiberg-Witten Floer cohomology 
\cite{Taubes:ECH=SWF1}, and in
particular depends only on $Y$.  In addition, under this isomorphism
the ECH contact invariant depends only on $\xi$ and agrees with an
analogous contact invariant in Seiberg-Witten Floer cohomology.
However we will not need these facts here.

There is also a filtered version of ECH which is important in
applications.  If $\alpha=\{(\alpha_i,m_i)\}$ is an orbit set, define
the {\em symplectic action\/}
\[
\mc{A}(\alpha) \eqdef \sum_im_i\int_{\alpha_i}\lambda.
\]
It follows from the conditions on $J$ that the ECH differential
decreases symplectic action, i.e.\ if
$\langle\partial\alpha,\beta\rangle\neq 0$ then
$\mc{A}(\alpha)>\mc{A}(\beta)$.  Hence for each $L\in(0,\infty]$, the
submodule $ECC^L(Y,\lambda,J)$ of $ECC(Y,\lambda,J)$ generated by admissible orbit sets of
action less than $L$ is a subcomplex.  The homology of this subcomplex
is denoted by $ECH^L(Y,\lambda,J)$, and called {\em filtered ECH\/}.
Of course, taking $L=\infty$ recovers the usual ECH.

It is shown in \cite{HutchingsTaubes:Arnold2} 
that filtered ECH does not depend on $J$ (we
will not use this fact here).  However filtered ECH does depend on the
contact form $\lambda$.  In particular, if $c$ is a positive constant,
then an almost complex structure $J$ as needed to define the ECH of
$\lambda$ determines an almost complex structure (which we also denote
by $J$) as needed to define the ECH of $c\lambda$, with the same
holomorphic curves.  There is then a
canonical isomorphism of chain complexes
\begin{equation}
\label{eqn:scaling}
ECC^L(Y,\lambda,J)=ECC^{cL}(Y,c\lambda,J),
\end{equation}
induced by the obvious bijection on generators.

\subsection{The relative filtration $J_+$}

We now recall from \cite{Hutchings:revisited}*{\S 6} 
how to define a relative filtration
on ECH which is similar to the exponent of $\hbar$ in SFT.

Let $\alpha$ and $\beta$ be admissible orbit sets with
$[\alpha]=[\beta]\in H_1(Y)$, and let $Z\in H_2(Y,\alpha,\beta)$.
Similarly to \eqref{eqn:I}, define
\begin{equation}
\label{eqn:J}
J_+(\alpha,\beta,Z) \eqdef -c_\tau(Z) + Q_\tau(Z) +
\sum_i\sum_{k=1}^{m_i-1}\op{CZ}_\tau(\alpha_i^k) -
\sum_j\sum_{k=1}^{n_j-1}\op{CZ}_\tau(\beta_j^k) + |\alpha| - |\beta|.
\end{equation}
Here $|\alpha|$ denotes the cardinality of the admissible orbit set
$\alpha$.  (There is also a more general definition of $J_+$ when the
orbit sets are not necessarily admissible, but we will not need this
here.)  If $u\in\mc{M}_J(\alpha,\beta)$, define $J_+(u)\eqdef
J_+(\alpha,\beta,[u])$.  There is now the following analogue of
Proposition~\ref{prop:I}, proved in 
\cite{Hutchings:revisited}*{Prop.~6.9 and Thm.~6.6}:

\begin{proposition}
\label{prop:J}
Let $\alpha$ and $\beta$ be admissible orbit sets with $[\alpha]=[\beta]$.
\begin{enumerate}
\item[(a)]
If $u\in\mc{M}_J(\alpha,\beta)$ is irreducible and not multiply
covered and has genus $g$, then
\begin{equation}
\label{eqn:JI}
J_+(u) \ge 2\left(g-1+|\alpha|+\sum_{i}(N_i^+ - 1) + \sum_j(N_j^- - 1)\right).
\end{equation}
Here $N_i^+$ denotes the number of positive ends of $u$ at covers of
$\alpha_i$, and $N_j^-$ denotes the number of negative ends of $u$ at
covers of $\beta_j$.  Moreover, equality holds in \eqref{eqn:JI}
when $\op{ind}(u)=I(u)$.
\item[(b)]
If $J$ is generic, and if $u\in\mc{M}_J(\alpha,\beta)$, then $J_+(u)\ge 0$.
\end{enumerate}
\end{proposition}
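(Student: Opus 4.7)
The plan is to deduce both statements from the relative adjunction formula together with the asymptotic writhe bounds, following essentially the same template that yields the ECH index inequality $\op{ind}(u)\le I(u)$ of Proposition~\ref{prop:I}(a); the argument is carried out in \cite{Hutchings:revisited}, building on \cite{Hutchings:index} and \cite{HutchingsTaubes:gluing1}.

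For part~(a), assume $u\in\mc{M}_J(\alpha,\beta)$ is somewhere injective, with positive ends of multiplicities $(q_{i,a})_{a=1}^{N_i^+}$ over the simple orbit underlying $\alpha_i$ (so $\sum_aq_{i,a}=m_i$) and negative ends of multiplicities $(p_{j,b})_{b=1}^{N_j^-}$ over $\beta_j$. First I would combine the Fredholm index formula
$$\op{ind}(u) = -\chi(\dot\Sigma) + 2c_\tau(u^*\xi) + \sum_{i,a}\op{CZ}_\tau(\alpha_i^{q_{i,a}}) - \sum_{j,b}\op{CZ}_\tau(\beta_j^{p_{j,b}})$$
with the relative adjunction formula
$$c_\tau(u^*\xi) = \chi(\dot\Sigma) + Q_\tau([u]) + w_\tau(u) - 2\delta(u),$$
where $w_\tau(u)$ is the total asymptotic writhe and $\delta(u)\ge 0$ counts singular points with multiplicity. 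Inserting these into \eqref{eqn:J} rewrites $J_+(u)$ as $2\delta(u)$ plus the topological contribution $-\chi(\dot\Sigma) = 2g-2+\sum_iN_i^++\sum_jN_j^-$ plus a local contribution at each end.

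Next I would apply the asymptotic writhe bounds at each end: at a positive end of multiplicity $q$ over a simple orbit $\gamma$, the writhe is bounded above by a specific combination of the Conley-Zehnder indices $\op{CZ}_\tau(\gamma^k)$ for $k\le q$, with the opposite bound holding at negative ends. Summed over the ends covering a single $\alpha_i$, these bounds produce exactly the sum $\sum_{k=1}^{m_i-1}\op{CZ}_\tau(\alpha_i^k)$ of $J_+$ plus an excess nonnegative partition term which is at least $2(N_i^+-1)$; the analogous statement holds at the negative ends. Assembling the contributions yields the inequality~\eqref{eqn:JI}. The writhe bounds invoked here become equalities precisely when the ECH index inequality $\op{ind}(u)\le I(u)$ is an equality, so equality in \eqref{eqn:JI} holds exactly when $\op{ind}(u)=I(u)$.

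For part~(b), $u$ may be multiply covered, so I would decompose it as a disjoint union of components, each either a cover of a trivial cylinder or of the form $u_a^{\circ d_a}$ with $u_a$ somewhere injective and $d_a\ge 1$. Admissibility of $\alpha$ and $\beta$ (hyperbolic orbits occur only with multiplicity one) allows a direct check from \eqref{eqn:J} that the contributions from trivial-cylinder components are nonnegative. For each non-trivial $u_a$, part~(a) together with the genericity of $J$ (which forces $\op{ind}(u_a)\ge 1$) gives $J_+(u_a)\ge 0$; the estimate $J_+(u_a^{\circ d_a})\ge 0$ then follows from a superadditivity statement $J_+(u_a^{\circ d_a})\ge d_aJ_+(u_a)$, obtained by applying the same writhe bounds directly to the cover. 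A final bookkeeping argument accounts for the Conley-Zehnder terms at Reeb orbits appearing as asymptotics of several components simultaneously, and shows that the global $J_+(u)$ dominates the sum of the pieces. The main obstacle is precisely this combinatorics in~(b): the superadditivity $J_+(u_a^{\circ d_a})\ge d_aJ_+(u_a)$ is not formal, and the matching of partitions $(q_{i,a})_a,(p_{j,b})_b$ across components must be handled delicately, since each $J_+$ contribution is sensitive to which iterates of a given simple orbit appear as asymptotic limits.
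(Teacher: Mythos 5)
The appendix does not prove Proposition~\ref{prop:J} at all: it cites \cite{Hutchings:revisited}*{Prop.~6.9 and Thm.~6.6}, so there is no in-paper argument to compare against. Your outline for part~(a) is the right one and matches the cited source in spirit: substitute the relative adjunction formula $c_\tau = \chi + Q_\tau + w_\tau - 2\delta$ into the definition of $J_+$ to obtain $J_+ = 2\delta -\chi -w_\tau + (\text{CZ terms}) + |\alpha| - |\beta|$, then control the writhe end-by-end, with equality in the writhe bound and $\delta = 0$ characterizing the case $\op{ind}(u)=I(u)$.

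Your treatment of part~(b), however, has a real gap, and I think it is more serious than you suggest. First, the superadditivity $J_+(u_a^{\circ d_a}) \ge d_a J_+(u_a)$ cannot be true as stated: $c_\tau$ and the Conley--Zehnder sums scale linearly in the covering degree, but $Q_\tau$ scales quadratically, and $Q_\tau$ can have either sign, so $J_+(u_a^{\circ d_a}) - d_aJ_+(u_a)$ is not sign-definite by any formal argument. Second, your proposed remedy --- ``applying the same writhe bounds directly to the cover'' --- cannot produce such a relation, because the writhe bounds controlled $w_\tau$, and it was the \emph{relative adjunction formula} that converted $-c_\tau + Q_\tau$ into $-\chi - w_\tau + 2\delta$; adjunction fails for a multiply covered curve (its image is not an immersion in the appropriate sense, and the singularity count $\delta$ is not defined for it). So the package ``Fredholm index + adjunction + writhe bounds'' that drives part~(a) simply does not apply to a multiply covered component. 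The actual argument in \cite{Hutchings:revisited} proceeds by comparing $J_+$ of the full curve to quantities attached to the underlying simple components, exploiting that for generic $J$ the multiply covered non-$\R$-invariant components sit over somewhere injective curves of $\op{ind}\ge 1$ and that admissibility forbids multiply covered ends at hyperbolic orbits; it is not a component-by-component superadditivity. You would also want to double-check the role of $\op{ind}(u_a)\ge 1$ in your argument: as written, part~(a) alone already gives $J_+(u_a)\ge 0$ for the simple non-trivial components (since $|\alpha_a|\ge 1$), so genericity enters elsewhere, namely in controlling which multiple covers can arise and in the cross-terms of $Q_\tau$ between distinct components.
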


Note that if $u$ contributes to the ECH differential, then $J_+(u)$ is
even.  (Comparing \eqref{eqn:I} and \eqref{eqn:J} shows that the
parity of $J_+(u)-I(u)$ is the parity of the number of Reeb orbits
$\alpha_i$ or $\beta_j$ that are positive hyperbolic, which is the
parity of $\op{ind}(u)$.)  Thus we can decompose the ECH differential
$\partial$ as
\begin{equation}
\label{eqn:ddecomp}
\partial = \partial_0 + \partial_1 + \partial_2 + \cdots
\end{equation}
where $\partial_k$ denotes the contribution from holomorphic curves
$u$ with $J_+(u)=2k$.

Since $J_+$ is additive under gluing 
\cite{Hutchings:revisited}*{Prop.~6.5(a)}, it
follows that $\partial_0^2=0$,
$\partial_0\partial_1+\partial_1\partial_0=0$, etc.  Thus we obtain a
spectral sequence $E^*(Y,\lambda,J)$, where $E^1$ is the homology of
$\partial_0$, and $E^2$ is the homology of $\partial_1$ acting on
$E^1$.  Let us call this the ``$J_+$ spectral sequence''.
Unfortunately this spectral sequence is not invariant under
deformation of the contact form.  The reason is that although an exact
symplectic cobordism induces a map on ECH which (up
to a given symplectic action) is induced by a chain map that somehow counts
(possibly broken) holomorphic curves 
\cite{HutchingsTaubes:Arnold2}, 
Proposition~\ref{prop:J}(b) does not generalize to exact symplectic
cobordisms.  That is, the chain map induced by a cobordism can include
contributions from multiply covered holomorphic curves with $J_+$
negative.  However we can still use the $J_+$ spectral sequence to define
a useful analogue of the order of algebraic $k$-torsion.

\subsection{The analogue of order of algebraic torsion}

Let $Y$ be a closed oriented $3$-manifold, and let $(\lambda,J)$ be
ECH data on $Y$.

\begin{definition}
Define $f(Y,\lambda,J)$ to be the smallest nonnegative integer $k$ such
that $\emptyset$ does not survive to the $E^{k+1}$ page of the spectral sequence
$E^*(Y,\lambda,J)$.  If no such $k$ exists, define
$f(Y,\lambda,J)\eqdef\infty$.
\end{definition}

Note that if there exists $x\in ECC(Y,\lambda,J)$ with
\[
(\partial_0+\cdots+\partial_k)x=\emptyset,
\]
then $f(Y,\lambda, J)\le k$.
In particular, $f(Y,\lambda,J)<\infty$ if the ECH contact
invariant vanishes.  One can use the cobordism maps on ECH defined in
\cite{HutchingsTaubes:Arnold2} 
(using Seiberg-Witten theory) to show that $f(Y,\lambda,J)$
does not depend on $J$.  However we will not need this fact here.

There are now two difficulties in using $f$ to obstruct exact
symplectic cobordisms.  First, we would like to show that if there is
an exact symplectic cobordism from $(Y_+,\lambda_+)$ to
$(Y_-,\lambda_-)$ then
\begin{equation}
\label{eqn:wish}
f(Y_+,\lambda_+,J_+)\ge f(Y_-,\lambda_-,J_-).
\end{equation}
This would imply that $f$ depends only on the contact structure and is
monotone with respect to exact symplectic cobordisms.  Unfortunately,
we cannot prove \eqref{eqn:wish} or these consequences (and we do not
know whether these are true), due to the aforementioned lack of invariance
of the spectral sequence.  Second, $f(Y,\lambda,J)$ is difficult to
compute in practice, because often one only understands the ECH chain
complex up to a given symplectic action.

To deal with the latter difficulty, we can define a filtered version
of $f$. To prepare for this, note that the $J_+$ spectral sequence has an analogue for any subcomplex of $ECC(Y,\lambda,J)$.

\begin{definition}
  Given $L\in(0,\infty]$, define $f^L(Y,\lambda,J)$ to be the
  smallest nonnegative integer $k$ such that $\emptyset$ does not survive to the $E^{k+1}$ page of the $J_+$ spectral sequence for the subcomplex $ECC^L(Y,\lambda,J)$. If no such $k$ exists, define $f^L(Y,\lambda,J)\eqdef\infty$.
\end{definition}

The following proposition can be used in calculations to give lower bounds on
$f^L$.

\begin{proposition}\label{prop:ECHlowerBound}
  Let $(\lambda,J)$ be ECH data on $Y$, and fix
  $L\in(0,\infty]$.  Let $k$ be a positive integer.  Suppose
  that the algebraic count
\[
\sum_{\{u\in\mc{M}_J(\alpha,\emptyset,Z)/\R\}}\varepsilon(u)=0
\]
whenever:
\begin{itemize}
\item
 $\alpha$ is an admissible orbit set with $\mc{A}(\alpha)<L$, and
\item
$Z\in H_2(Y,\alpha,\emptyset)$ is such that
$I(\alpha,\emptyset,Z)=1$, and
\item
curves in $\mc{M}_J(\alpha,\emptyset,Z)$
have genus~$g$ and $N_+$ positive ends with $g + N_+ \le k$.
\end{itemize}
Then $f^L(Y,\lambda,J)\ge k$.
\end{proposition}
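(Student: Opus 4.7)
The plan is to use the hypothesis on vanishing curve counts to obstruct, at each of the first $k$ pages of the $J_+$-spectral sequence of $ECC^L(Y,\lambda,J)$, any attempt to kill the empty generator. The first step is to pin down the relation between $J_+(u)$ and the topological invariants $(g,N_+)$ for any curve $u \in \mc{M}_J(\alpha,\emptyset)$ with $I(u)=1$ that contributes to the differential. By Proposition~\ref{prop:I}(b), such a curve decomposes as $u = u_0 \sqcup u_1$ with $u_1$ embedded and connected and $\op{ind}(u_1) = I(u_1) = 1$, while $u_0$ is $\R$-invariant; but an $\R$-invariant current has matching positive and negative asymptotics, and here the negative orbit set is empty, so $u_0$ must be vacuous. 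Hence $u = u_1$ is embedded and connected, and the equality clause of Proposition~\ref{prop:J}(a), applied with $\beta = \emptyset$, gives
$$
J_+(u) \;=\; 2\bigl(g - 1 + |\alpha| + \sum_i (N_i^+ - 1)\bigr) \;=\; 2(g - 1 + N_+),
$$
where $N_+ = \sum_i N_i^+$. In particular $J_+(u) = 2j$ is equivalent to $g + N_+ = j+1$.

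The second step is to read off from the hypothesis that $\langle \partial_j y, \emptyset \rangle = 0$ for every $y \in ECC^L(Y,\lambda,J)$ and every $j \leq k-1$. By linearity it suffices to verify this when $y$ is an admissible orbit set $\alpha$ with $\mc{A}(\alpha) < L$, and for such $\alpha$ the coefficient $\langle \partial_j \alpha, \emptyset \rangle$ is a signed count of curves in $\mc{M}_J(\alpha,\emptyset,Z)/\R$ summed over the relative classes $Z$ with $I(\alpha,\emptyset,Z) = 1$ and $J_+(\alpha,\emptyset,Z) = 2j$. By the identity from the first step, every contributing curve has $g + N_+ = j + 1 \leq k$, so the standing hypothesis forces each such count to vanish.

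The last step is to interpret $f^L(Y,\lambda,J) \geq k$ as the statement that $[\emptyset]$ survives to the $E^k$ page of the spectral sequence, and argue by contradiction using the standard cascade description of the multicomplex $(ECC^L(Y,\lambda,J) \otimes \R[[t]], \sum_{j \geq 0} t^j \partial_j)$: if $[\emptyset]$ were zero in $E^k$, then there would exist $y_0, y_1, \dots, y_{k-1} \in ECC^L(Y,\lambda,J)$ satisfying $\sum_{i+\ell = r}\partial_i y_\ell = 0$ for every $0 \leq r \leq k - 2$ and $\sum_{i+\ell = k-1}\partial_i y_\ell = \emptyset$. Taking the $\emptyset$-coefficient of the last equation yields $\sum_{i=0}^{k-1}\langle \partial_i y_{k-1-i}, \emptyset \rangle = 1$, but every summand vanishes by the previous step, a contradiction. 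The only delicate point is the bookkeeping that a class in the multicomplex spectral sequence is killed by page $E^{k+1}$ precisely when such a cascade exists; this is a standard but slightly fiddly unwinding, and once it is in place the rest of the argument is essentially formal from the identity $J_+(u) = 2(g - 1 + N_+)$.
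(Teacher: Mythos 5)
Your argument matches the paper's: you deduce from Proposition~\ref{prop:I}(b) (with $\beta=\emptyset$, so $u_0$ is forced to be empty) that $I=1$ curves are embedded and connected, then use the equality clause of Proposition~\ref{prop:J}(a) to get $J_+(u)=2(g-1+N_+)$, and conclude that $\langle\partial_j\,\cdot\,,\emptyset\rangle=0$ for $j<k$. The paper leaves the final spectral-sequence bookkeeping implicit whereas you spell it out via the multicomplex/cascade description, but the substance of the proof is the same.
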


In the third bullet point above, note that curves in $\mc{M}_J(\alpha,\emptyset,Z)$ are embedded and connected by Proposition~\ref{prop:I}(b), and then $g$ and $N_+$ are uniquely determined by $\alpha$ and $Z$.  Here $N_+$ is determined by \cite[Thm.\ 4.15]{Hutchings:revisited}, while $g$ is determined by Proposition~\ref{prop:J}(a).

\begin{proof}
  Let $\alpha$ be an admissible orbit set with $\mc{A}(\alpha)<L$ and
  let $Z\in H_2(Y,\alpha,\emptyset)$ such that
  $I(\alpha,\emptyset,Z)=1$ and $J_+(\alpha,\emptyset,Z) < 2k$.  Then
  by Proposition~\ref{prop:I}(b), curves in
  $\mc{M}_J(\alpha,\emptyset,Z)$ are embedded and connected, so by
  Proposition~\ref{prop:J}(a), such curves have $g+N_+\le k$.  Then by
  hypothesis, the algebraic count of such curves is zero.  This means
  that $\langle\partial_i\alpha,\emptyset\rangle=0$ whenever $i<k$.
\end{proof}

We now prove a weaker version of \eqref{eqn:wish}, which will still
allow us to obstruct exact symplectic cobordisms.  This requires the
following additional definitions.

\begin{definition}
  An orbit set $\alpha=\{(\alpha_i,m_i)\}$ is {\em
    simple\/} (with respect to $J$) if:
\begin{itemize}
\item
$m_i=1$ for each $i$.
\item
If $\beta=\{(\beta_j,n_j)\}$ is another orbit set (not necessarily
admissible), and if there is a (possibly broken) $J$-holomorphic curve
from $\alpha$ to $\beta$, then $n_j=1$ for each~$j$.
\end{itemize}
Given $L\in(0,\infty]$, let
$ECC^L_{\op{simp}}(Y,\lambda,J)$ denote the subcomplex of
$ECC(Y,\lambda,J)$ generated by simple admissible orbit sets $\alpha$
with $\mc{A}(\alpha)<L$.

\end{definition}

Note that even when $L=\infty$, the homology of the subcomplex
$ECC^L_{\op{simp}}$ is not invariant under deformation of $\lambda$,
as shown by the ellipsoid example in \cite{Hutchings:ICM}.

\begin{definition}
Define $f^L_{\op{simp}}(Y,\lambda,J)$ to be the smallest nonnegative
integer $k$ such that $\emptyset$ does not survive to the $E^{k+1}$ page of the $J_+$ spectral sequence for the subcomplex
$ECC^L_{\op{simp}}(Y,\lambda,J)$. If no such $k$ exists, define $f^L_{\op{simp}}(Y,\lambda,J)\eqdef\infty$.
\end{definition}

Note that $f^L_{\op{simp}}(Y,\lambda,J)\ge
f^L(Y,\lambda,J)$, because the inclusion of chain complexes induces a morphism of spectral sequences.  The main result of this appendix is now the following theorem.

\begin{theorem}
\label{thm:fm}
Let $(\lambda_\pm,J_\pm)$ be ECH data on $Y_\pm$.
Suppose there is an exact symplectic cobordism from $(Y_+,\lambda_+)$
to $(Y_-,\lambda_-)$.  Then
\[
f^L_{\op{simp}}(Y_+,\lambda_+,J_+)\ge f^L(Y_-,\lambda_-,J_-)
\]
for each $L\in(0,\infty]$.
\end{theorem}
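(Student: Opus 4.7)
The plan is to construct from the given exact cobordism a chain map
\[
\Phi : ECC^L_{\op{simp}}(Y_+, \lambda_+, J_+) \to ECC^L(Y_-, \lambda_-, J_-)
\]
that respects the $J_+$-decomposition $\partial = \partial_0 + \partial_1 + \cdots$ of the differentials. Such a $\Phi$ induces a morphism of the corresponding spectral sequences sending $[\emptyset]$ to $[\emptyset]$, so if $[\emptyset]$ becomes trivial by the $E^{k+1}$ page in the source simple spectral sequence, the same holds in the target, giving $f^L(Y_-, \lambda_-, J_-) \le f^L_{\op{simp}}(Y_+, \lambda_+, J_+)$.

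To build $\Phi$, I would complete the exact cobordism $(X, d\lambda_X)$ to $(\overline X, J_X)$ with a generic almost complex structure matching $J_\pm$ on the cylindrical ends, and for each simple admissible orbit set $\alpha_+$ with $\mc{A}(\alpha_+) < L$ define
\[
\Phi(\alpha_+) = \sum_{\alpha_-, Z} \#\bigl(\mc{M}_{J_X}(\alpha_+, \alpha_-; Z)\bigr) \cdot \alpha_-,
\]
summed over admissible $\alpha_-$ in $Y_-$ and $Z \in H_2(\overline X, \alpha_+, \alpha_-)$ with cobordism ECH index $I_X(\alpha_+,\alpha_-,Z) = 0$. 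Exactness forces $\mc{A}(\alpha_-) \le \mc{A}(\alpha_+) < L$, so $\Phi$ lands in $ECC^L(Y_-)$; moreover the only curve in $\overline X$ with no positive ends is the empty one, giving $\Phi_0(\emptyset) = \emptyset$ and $\Phi_i(\emptyset) = 0$ for $i \ge 1$. Since $\alpha_+$ is simple, every positive asymptotic orbit of a curve contributing to $\Phi(\alpha_+)$ is simply covered; because the asymptotic multiplicity of a degree-$d$ branched cover is divisible by $d$, every such curve is somewhere injective. A generic $J_X$ then makes the relevant moduli spaces transverse, and the cobordism analog of Proposition~\ref{prop:J}(b) gives $J_+(u) \ge 0$, yielding a well-defined decomposition $\Phi = \Phi_0 + \Phi_1 + \cdots$ with $\Phi_i$ counting curves of $J_+(u) = 2i$.

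Next I would extract the chain map identity $\partial^- \Phi = \Phi \partial^+$ from the one-dimensional moduli spaces of $I_X = 1$ curves in $\overline X$ with positive end at $\alpha_+$. By SFT compactness their boundary consists of two types of breakings: an $I = 1$ curve in $\R \times Y_+$ concatenated with an $I_X = 0$ curve in $\overline X$, or an $I_X = 0$ curve in $\overline X$ concatenated with an $I = 1$ curve in $\R \times Y_-$. Since $ECC^L_{\op{simp}}$ is closed under $\partial^+$, the intermediate orbit set in the first type of breaking is again simple, making the cobordism piece somewhere injective; in the second type, the cylindrical $Y_-$ piece is embedded plus trivial cylinders by Proposition~\ref{prop:I}(b), irrespective of simplicity. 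Additivity of $J_+$ under gluing then yields, for each $n \ge 0$,
\[
\sum_{i+j = n} \partial^-_i \Phi_j \;=\; \sum_{i+j = n} \Phi_j \partial^+_i,
\]
equivalently $\partial^-_\epsilon \Phi_\epsilon = \Phi_\epsilon \partial^+_\epsilon$ in a formal variable $\epsilon$, where $\partial^\pm_\epsilon = \sum \epsilon^i \partial^\pm_i$ and $\Phi_\epsilon = \sum \epsilon^i \Phi_i$.

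To conclude, I would encode the condition $f^L_{\op{simp}}(Y_+,\lambda_+,J_+) \le k$ as the existence of a truncated series $x = x_0 + \epsilon x_1 + \cdots + \epsilon^k x_k$ in $ECC^L_{\op{simp}}(Y_+)$ with $\partial^+_\epsilon x \equiv \epsilon^k \emptyset \pmod{\epsilon^{k+1}}$, which is the standard translation of the fact that $[\emptyset]$ becomes a boundary on the $E^{k+1}$ page of the spectral sequence. Setting $y \equiv \Phi_\epsilon x \pmod{\epsilon^{k+1}}$ and applying the chain map identity together with $\Phi_0(\emptyset) = \emptyset$ gives $\partial^-_\epsilon y \equiv \epsilon^k \emptyset \pmod{\epsilon^{k+1}}$, proving $f^L(Y_-,\lambda_-,J_-) \le k$. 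The hardest step will be the transversality and compactness analysis underlying the chain map identity: the simplicity hypothesis on $\alpha_+$ is precisely what rules out multiply covered curves in $\Phi$ itself and in type-(a) breakings, and is the reason the theorem bounds $f^L$ on the negative end by $f^L_{\op{simp}}$ rather than $f^L$ on the positive end.
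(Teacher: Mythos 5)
Your proposal follows essentially the same route as the paper: construct a chain map $\Phi : ECC^L_{\op{simp}}(Y_+,\lambda_+,J_+)\to ECC^L(Y_-,\lambda_-,J_-)$ by counting $I=0$ curves in the completed cobordism, decompose it via $J_+$ into $\Phi = \Phi_0 + \Phi_1 + \cdots$, verify $\Phi(\emptyset)=\emptyset$, and then read off the inequality from the induced morphism of $J_+$-spectral sequences (your $\epsilon$-encoding is just the standard translation of that last step). The role of the simplicity hypothesis as a device to rule out multiple covers in the cobordism level, and the use of exactness to force $\mathcal{A}(\alpha_-)\le\mathcal{A}(\alpha_+)$ and to guarantee each component has a positive end, are exactly as in the paper.

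There is one genuine gap, however, in your proof of the chain map identity. You assert that SFT compactness yields only two types of boundary breakings: an $I=1$ curve in $\R\times Y_+$ stacked on an $I_X=0$ cobordism curve, or an $I_X=0$ cobordism curve stacked on an $I=1$ curve in $\R\times Y_-$. This is not what SFT compactness gives in the ECH setting. The limit of a sequence of $I_X=1$ cobordism curves can contain \emph{additional intermediate symplectization levels} consisting of branched covers of $\R$-invariant cylinders; these have $I=0$ but may have negative Fredholm index, so they cannot be excluded by genericity, and they certainly need not be trivial. This is precisely the phenomenon that makes $\partial^2=0$ hard in ECH, and the paper handles it by invoking the Hutchings--Taubes obstruction-bundle gluing analysis (the analogues of Lemma~7.19 and Theorem~7.20 of the $\partial^2=0$ paper carry over to the cobordism situation once multiple covers in the cobordism level are excluded via simplicity). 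Without importing that analysis, your claimed two-type description of the boundary is false, and the identity $\partial^-\Phi = \Phi\partial^+$ (and its $J_+$-graded refinement) does not follow from SFT compactness alone. You correctly flag the compactness analysis as "the hardest step," but the specific issue you need to name and address is the presence of these connector levels, and the fact that handling them requires the full Hutchings--Taubes gluing machinery rather than a generic transversality argument. A secondary, more minor omission: you should also note the Gromov-compactness argument that the sum defining $\Phi(\alpha_+)$ is finite; this again uses that broken limits have no multiply covered component in the cobordism level (by simplicity) together with the index inequality to rule out degenerations, forcing the $I_X=0$ moduli space to be a finite set of isolated points.
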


Here is how Theorem~\ref{thm:fm} can be used in practice to obstruct
symplectic cobordisms.  Below, write $f_{\op{simp}}\eqdef
f_{\op{simp}}^\infty$.

\begin{corollary}
\label{cor:ECHobstructions}
Suppose there exists an exact symplectic cobordism from $(Y_+,\xi_+)$
to $(Y_-,\xi_-)$.  Fix ECH data $(\lambda_+,J_+)$ for
$(Y_+,\xi_+)$ and a contact form $\lambda_-'$ with
$\Ker(\lambda_-')=\xi_-$.  Fix a positive integer $k$.  Suppose that
for each $L>0$ there exist ECH data $(\lambda_-,J_-)$ for
$(Y_-,\xi_-)$ with $f^L(Y_-,\lambda_-,J_-)\ge k$ and an exact symplectic
cobordism from $(Y_-,\lambda_-')$ to $(Y_-,\lambda_-)$.  Then
$f_{\op{simp}}(Y_+,\lambda_+,J_+)\ge k$.
\end{corollary}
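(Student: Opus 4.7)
The plan is to prove that $f^L_{\op{simp}}(Y_+, \lambda_+, J_+) \ge k$ for every $L > 0$, which will imply the corollary. Indeed, any witness in $ECC_{\op{simp}}(Y_+,\lambda_+,J_+)$ to $f_{\op{simp}}(Y_+,\lambda_+,J_+) < k$ is a finite $\Z$-linear combination of admissible orbit sets and hence lies in $ECC^{L_0+1}_{\op{simp}}(Y_+,\lambda_+,J_+)$ for some finite $L_0$, giving $f^{L_0+1}_{\op{simp}}(Y_+,\lambda_+,J_+) < k$; equivalently, $L \mapsto f^L_{\op{simp}}$ is nonincreasing and converges to $f_{\op{simp}}$ as $L \to \infty$.

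So I would fix $L > 0$. Using standard manipulations of Liouville primitives in collar neighborhoods of $\partial X$ (flowing the boundary along the Liouville vector field, and gluing on finite symplectization collars at the ends), I would realize the given exact cobordism from $(Y_+,\xi_+)$ to $(Y_-,\xi_-)$ with positive boundary contact form $c\lambda_+$ for some positive constant $c$ depending only on $\lambda_+$ and the original Liouville structure, and with negative boundary contact form $\lambda_-'$; call this cobordism $X_0$. Then I would apply the hypothesis of the corollary at threshold $cL$ to obtain ECH data $(\lambda_-, J_-)$ on $(Y_-,\xi_-)$ with $f^{cL}(Y_-, \lambda_-, J_-) \ge k$ together with an exact cobordism $X_1$ from $(Y_-,\lambda_-')$ to $(Y_-,\lambda_-)$. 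Concatenating $X_0$ with $X_1$ along the common end $(Y_-, \lambda_-')$ then produces an exact cobordism $X$ from $(Y_+, c\lambda_+)$ to $(Y_-,\lambda_-)$.

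Applying Theorem~\ref{thm:fm} to $X$ at filtration level $cL$ yields
$$f^{cL}_{\op{simp}}(Y_+, c\lambda_+, J_+) \;\ge\; f^{cL}(Y_-,\lambda_-,J_-) \;\ge\; k.$$
Via the scaling identification \eqref{eqn:scaling}, which restricts to a canonical isomorphism $ECC^L_{\op{simp}}(Y_+,\lambda_+,J_+) \cong ECC^{cL}_{\op{simp}}(Y_+, c\lambda_+, J_+)$ (simpleness depends on $J_+$ but not on the scale of the contact form, since rescaling $\lambda_+$ does not alter the set of $J_+$-holomorphic curves), the two $J_+$ spectral sequences are identified, so $f^{L}_{\op{simp}}(Y_+, \lambda_+, J_+) = f^{cL}_{\op{simp}}(Y_+, c\lambda_+, J_+) \ge k$, which is what I set out to show.

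The hard part will be the cobordism-realization claim in the second paragraph: that a given exact symplectic cobordism between contact structures can be deformed to have prescribed contact forms at the ends, up to rescaling by a global positive constant. This is essentially a Moser-type deformation of Liouville primitives in boundary collars, combined with attaching finite pieces of the symplectizations of the two ends to accommodate the scaling constraint, and is folklore in contact topology, but should be verified carefully. It is also precisely this flexibility that explains the somewhat elaborate formulation of the hypothesis in terms of the auxiliary form $\lambda_-'$ and an auxiliary cobordism $X_1$ on $Y_-$, which together absorb the mismatch between the natural negative boundary contact form induced on the given cobordism and the contact forms for which the filtered invariant $f^L$ can be estimated.
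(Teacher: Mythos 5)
Your argument is correct and follows essentially the same route as the paper's proof: rescale to obtain an exact cobordism from $(Y_+,c\lambda_+)$ to $(Y_-,\lambda_-')$ for a constant $c$, concatenate with the hypothesized cobordism to $(Y_-,\lambda_-)$, apply Theorem~\ref{thm:fm}, and convert back via the scaling identification \eqref{eqn:scaling}, letting $L$ range over all positive values. The only differences are cosmetic---you index by $cL$ where the paper indexes by $L$ and $c^{-1}L$, and you spell out the rescaling step and the fact that $f^L_{\op{simp}}\to f_{\op{simp}}$ as $L\to\infty$, both of which the paper simply asserts.
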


\begin{proof}
  The first hypothesis implies that there exist a positive constant
  $c$ and an exact symplectic cobordism from $(Y_+,c\lambda_+)$ to
  $(Y_-,\lambda_-')$.  The second hypothesis then implies that for
  each $L>0$ there exist ECH data $(\lambda_-,J_-)$ for $(Y_-,\xi_-)$
  with $f^{L}(Y_-,\lambda_-,J_-)\ge k$ and an exact symplectic cobordism
  from $(Y_+,c\lambda_+)$ to $(Y_-,\lambda_-)$.  By the scaling
  isomorphism \eqref{eqn:scaling} and Theorem~\ref{thm:fm} we have
\[
f^{c^{-1}L}_{\op{simp}}(Y_+,\lambda_+,J_+)=f_{\op{simp}}^{L}(Y_+,c\lambda_+,J_+)\ge
k.
\]
Since $L$ was arbitrary, we conclude that
$f_{\op{simp}}(Y_+,\lambda_+,J_+)\ge k$.
\end{proof}

Here is another corollary of Theorem~\ref{thm:fm} which tells us a bit
more about the meaning of $f$.

\begin{corollary}
\label{cor:OT}
Suppose $(Y,\xi)$ is overtwisted.  Then $f(Y,\lambda,J)=0$ whenever
$(\lambda,J)$ is ECH data for $(Y,\xi)$.
\end{corollary}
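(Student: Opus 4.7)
My plan is to deduce Corollary~\ref{cor:OT} from Theorem~\ref{thm:ECH}(1) by propagating the vanishing of $f_{\op{simp}}$ for one particular choice of ECH data to the vanishing of $f$ for all ECH data via Theorem~\ref{thm:fm}. First I would unwind the definition: because $\emptyset$ is automatically $\partial_0$-closed, one has $f(Y,\lambda,J)=0$ if and only if there exists $x\in ECC^\infty(Y,\lambda,J)$ with $\partial_0 x=\emptyset$. The task therefore is to produce such an $x$ for every choice of ECH data $(\lambda,J)$ on $(Y,\xi)$.

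Next I would use the equivalence ``overtwisted $\Leftrightarrow$ planar $0$-torsion'' recorded in \S\ref{sec:planar}. Applying Theorem~\ref{thm:ECH}(1) with $k=0$ supplies one specific pair of ECH data $(\lambda_0,J_0)$ for $(Y,\xi)$ with $f^\infty_{\op{simp}}(Y,\lambda_0,J_0)\le 0$, i.e.\ there is a witness $x_0\in ECC^\infty_{\op{simp}}(Y,\lambda_0,J_0)$ with $\partial_0 x_0=\emptyset$. Setting $L_0:=\mc{A}(x_0)<\infty$, the same witness shows $f^L_{\op{simp}}(Y,\lambda_0,J_0)=0$ for every $L>L_0$.

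To transfer this to an arbitrary ECH pair $(\lambda,J)$ I would construct, for $c>0$ large enough, an exact symplectic cobordism from $(Y,c\lambda_0)$ to $(Y,\lambda)$ by the standard device used in the proof of Corollary~\ref{cor:ECHobstructions}: write $\lambda_0=g\lambda$ for a smooth function $g>0$ on $Y$, pick $c$ with $cg>1$ everywhere, and take $W=\{(s,y)\in\RR\times Y:0\le s\le \ln(cg(y))\}$ inside the symplectization $(\RR\times Y,d(e^s\lambda))$. Then $\partial_s$ is the Liouville vector field, outward-pointing at the top $s=\ln(cg(y))$ (where the induced contact form is $c\lambda_0$) and inward-pointing at the bottom $s=0$ (where it is $\lambda$).

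Finally, Theorem~\ref{thm:fm} applied to this $W$ gives, for every $L>0$,
$$
f^L_{\op{simp}}(Y,c\lambda_0,J_0)\ge f^L(Y,\lambda,J),
$$
and the scaling isomorphism \eqref{eqn:scaling}, which preserves simplicity since that notion depends only on $J$, rewrites the left side as $f^{L/c}_{\op{simp}}(Y,\lambda_0,J_0)$. Choosing $L>cL_0$ makes the left side vanish, so $f^L(Y,\lambda,J)=0$; any $x\in ECC^L(Y,\lambda,J)$ with $\partial_0 x=\emptyset$ is automatically such a witness in $ECC^\infty(Y,\lambda,J)$, yielding $f(Y,\lambda,J)=0$. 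The only substantive input is Theorem~\ref{thm:ECH}(1); the remainder is a purely formal assembly, so I do not expect a real obstacle.
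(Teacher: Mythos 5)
Your proposal is correct, but it takes a genuinely different route from the paper. The paper's own proof of Corollary~\ref{cor:OT} does not go through Theorem~\ref{thm:ECH}(1) at all: instead it invokes the argument from the appendix to \cite{Yau:overtwisted} (essentially Hofer's overtwisted-disk argument) to produce, directly and by hand, ECH data $(\lambda_+,J_+)$ for which there is an embedded Reeb orbit $\gamma$ of strictly minimal action bounding a unique Fredholm index~$1$ plane; since $\gamma$ has minimal action, $\{(\gamma,1)\}$ is automatically simple, and the plane has $I=1$, $J_+=0$, giving $\partial_0\{(\gamma,1)\}=\pm\emptyset$ and hence $f_{\op{simp}}(Y,\lambda_+,J_+)=0$. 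You instead route through the chain ``overtwisted $\Rightarrow$ planar $0$-torsion $\Rightarrow$ Theorem~\ref{thm:ECH}(1)'', which rests on the summed-open-book machinery of \S\ref{sec:planar} and Proposition~\ref{prop:openbook2}. This is logically valid (Theorem~\ref{thm:ECH}(1) is established independently of the corollary), but it is considerably heavier than Yau's argument and implicitly delegates the verification that the relevant generator $\{(\gamma_1^h,1)\}$ is \emph{simple} to the unstated proof of Theorem~\ref{thm:ECH}(1), whereas the paper's choice of data makes simplicity immediate from minimality of action. Your construction of the exact cobordism and the invocation of Theorem~\ref{thm:fm} are both fine, but the detour through finite action thresholds $L$, the auxiliary bound $L_0=\mc{A}(x_0)$, and the scaling \eqref{eqn:scaling} is unnecessary once you know $f^\infty_{\op{simp}}(Y,\lambda_0,J_0)=0$: the paper simply applies Theorem~\ref{thm:fm} at $L=\infty$ to the product cobordism from $(Y,c\lambda_0)$ to $(Y,\lambda)$ and is done. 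In short: correct, but a more elaborate argument than needed, trading Yau's elementary input for the planar-torsion machinery and an inessential filtration bookkeeping step.
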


\begin{proof}
  The argument in the appendix to \cite{Yau:overtwisted} shows that one can find
  ECH data $(\lambda_+,J_+)$ for $(Y,\xi)$ such that there
  is an embedded Reeb orbit $\gamma$ with the following properties:
\begin{itemize}
\item
$\gamma$ has smaller symplectic action than any other Reeb orbit.
\item
There is a unique Fredholm
  index $1$ holomorphic plane $u$ in $\R\times Y$ with positive end at
  $\gamma$.
\end{itemize}
The holomorphic plane $u$ is embedded in $\R\times Y$, so $I(u)=1$
also, and $J_+(u)=0$.  This means that
$\partial_0\{(\gamma,1)\}=\pm\emptyset$.  Since $\gamma$ has minimal
symplectic action, $\{(\gamma,1)\}$ is simple.  Thus
$f_{\op{simp}}(Y,\lambda_+,J_+)=0$.  We can also assume, by
multiplying $\lambda_+$ by a large positive constant, that there is an
exact (product) symplectic cobordism from $(Y,\lambda_+)$ to
$(Y,\lambda)$.  Theorem~\ref{thm:fm} with $L=\infty$ then implies
that $f(Y,\lambda,J)=0$.
\end{proof}

One might conjecture that the converse of Corollary~\ref{cor:OT} holds:

\begin{conjecture}
\label{conj:OT}
Given a closed contact 3-manifold $(Y,\xi)$, if $f(Y,\lambda,J)=0$ for
all ECH data $(\lambda,J)$ for $(Y,\xi)$, then $(Y,\xi)$ is
overtwisted.
\end{conjecture}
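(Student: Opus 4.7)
The plan is to prove the contrapositive of Conjecture~\ref{conj:OT}: if $(Y,\xi)$ is tight, then there exist ECH data $(\lambda,J)$ with $f(Y,\lambda,J)\ge 1$. The argument splits naturally into two cases depending on the vanishing of the ECH contact invariant $[\emptyset]\in ECH(Y,\xi)$.

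First, if $[\emptyset]\neq 0$, then $\emptyset$ is never a boundary in any chain complex computing $ECH(Y,\xi)$, so in particular it survives to every page of the $J_+$ spectral sequence for any ECH data, giving $f(Y,\lambda,J)=\infty$. By Taubes's isomorphism with Seiberg-Witten Floer cohomology, this case covers all strongly fillable contact structures (and conjecturally, many more tight ones). So the substantive case is that $(Y,\xi)$ is tight and $[\emptyset]=0$; one must rule out the existence of ECH data witnessing a chain-level relation $\partial_0\alpha=\pm\emptyset$ for some admissible generator $\alpha$.

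The next step is to analyze the geometry of such a contribution. By Proposition~\ref{prop:I}(b) any curve counted by $\partial_0$ has $I=1$, so after discarding the $\R$-invariant part one is reduced to an embedded, connected, genus-zero $J$-holomorphic curve $u$ with no negative ends; hence $u$ is a plane, $\alpha=\{(\gamma,1)\}$ for a simple Reeb orbit $\gamma$, and the conditions $I(u)=1$, $J_+(u)=0$ together with \eqref{eqn:I} and \eqref{eqn:J} force
\[
c_\tau(u)=1-\tfrac{1}{2}\op{CZ}_\tau(\gamma), \qquad Q_\tau(u)=-\tfrac{1}{2}\op{CZ}_\tau(\gamma),
\]
so in particular $\op{CZ}_\tau(\gamma)$ must be even and $\gamma$ positive hyperbolic. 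One would then attempt to show that a tight contact manifold admits no such plane for any choice of ECH data: the natural approach is to use the freedom to rescale $\lambda\mapsto c\lambda$ with $c\to\infty$ to produce a sequence of such hypothetical planes and extract, via SFT-compactness applied to this sequence, an embedded disk with Legendrian boundary whose Thurston-Bennequin invariant forces the existence of an overtwisted disk, in the spirit of Hofer's original proof of the Weinstein conjecture for overtwisted contact structures.

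The principal obstacle lies in this last extraction step. The invariants $c_\tau$ and $Q_\tau$ encode rotation and self-linking data of $\gamma$ in a rather indirect way, and translating the constraints above into a direct violation of the Bennequin inequality is not automatic. A more promising indirect route may be to compare $f$ with the order of algebraic torsion in full SFT: one would try to show that if $f(Y,\lambda,J)=0$ for all ECH data then the moduli spaces of embedded planes with $I=1,\,J_+=0$ assemble into a nontrivial augmentation that forces the SFT differential to hit a constant, hence produces algebraic $0$-torsion. Conjecture~\ref{conj:OT} would then reduce to the parallel (and also open) conjecture that algebraic overtwistedness coincides with geometric overtwistedness in dimension three, via Proposition~\ref{prop:algOT} and Corollary~\ref{cor:OT}. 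In either approach, the genuinely new analytical input required is some form of quantitative Hofer-type theorem recovering an overtwisted disk from the presence of an $I=1$, $J_+=0$ plane in the symplectization.
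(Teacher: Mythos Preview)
The statement you are attempting to prove is labeled a \emph{conjecture} in the paper, and the paper does not provide a proof of it. The only argument that follows Conjecture~\ref{conj:OT} in the appendix is a proof of the subsequent remark, namely that Conjecture~\ref{conj:OT} implies the (at the time open) conjecture that Legendrian surgery preserves tightness. In other words, the paper explicitly presents this as an open problem of difficulty at least comparable to a well-known hard question, so there is no ``paper's own proof'' to compare your proposal against.

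Your proposal is, correspondingly, not a proof either, and you acknowledge this yourself. The reduction you carry out is correct as far as it goes: if $[\emptyset]\neq 0$ then $f=\infty$ trivially, and in the remaining case any contribution to $\langle\partial_0\alpha,\emptyset\rangle$ must come from an embedded $I=1$, $J_+=0$ plane on a simply covered positive hyperbolic orbit, with the Chern and self-intersection numbers you compute. But the two continuation strategies you sketch both terminate in genuinely open problems. The first would require extracting an overtwisted disk from the existence of such a plane for \emph{every} choice of ECH data; Hofer's argument runs in the opposite direction (from an overtwisted disk to a plane), and no converse of this type is known. The second reduces Conjecture~\ref{conj:OT} to the conjecture that algebraic overtwistedness equals overtwistedness in dimension three, which the main paper also leaves open (cf.\ the discussion after Proposition~\ref{prop:algOT}). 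So your write-up is an honest outline of why the conjecture is plausible and where the difficulty lies, but it does not close the gap, and neither does the paper.
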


\begin{remark}
  Conjecture~\ref{conj:OT} implies the well-known conjecture that if
  $(Y_-,\xi_-)$ is a closed tight contact 3-manifold, and if
  $(Y_+,\xi_+)$ is obtained from $(Y_-,\xi_-)$ by Legendrian surgery,
  then $(Y_+,\xi_+)$ is also tight.
\end{remark}

\begin{proof}
  Suppose $(Y_+,\xi_+)$ is obtained from $(Y_-,\xi_-)$ by Legendrian
  surgery.  Recall from \cite{Weinstein:surgery} that there is an exact
  symplectic cobordism from $(Y_+,\xi_+)$ to $(Y_-,\xi_-)$.  If
  $(Y_+,\xi_+)$ is overtwisted, then as explained above one can find
  ECH data $(\lambda_+,J_+)$ for $(Y_+,\xi_+)$ such that
  $f_{\op{simp}}(Y_+,\lambda_+,J_+)=0$.  Theorem~\ref{thm:fm} then
  implies that $f(Y_-,\lambda_-,J_-)=0$ for all ECH data
  $(\lambda_-,J_-)$ for $(Y_-,\xi_-)$.  If we knew
  Conjecture~\ref{conj:OT}, then we could conclude that $(Y_-,\xi_-)$
  is overtwisted.
\end{proof}

\subsection{A cobordism chain map}
\label{sec:ccm}

We now state and prove the key lemma in the proof of
Theorem~\ref{thm:fm}.

\begin{lemma}
\label{lem:ccm}
Under the assumptions of Theorem~\ref{thm:fm}, there is a chain map
\[
\Phi: ECC^L_{\op{simp}}(Y_+,\lambda_+,J_+) \longrightarrow
ECC^L(Y_-,\lambda_-,J_-)
\]
with the following properties:
\begin{enumerate}
\item[(a)]
$\Phi(\emptyset) = \emptyset$.
\item[(b)]
There is a decomposition $\Phi=\Phi_0+\Phi_1+\cdots$ such that
\begin{equation}
\label{eqn:ssm}
\sum_{i+j=k}(\partial_i\Phi_j - \Phi_i\partial_j)=0
\end{equation}
for each nonnegative integer $k$.
\end{enumerate}
\end{lemma}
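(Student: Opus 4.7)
The plan is to construct $\Phi$ as a signed count of ECH-index-zero $J$-holomorphic buildings in the completion $\widehat{X}$ of the exact cobordism, filtered by the value of $J_+$. Fix a generic $\omega$-compatible almost complex structure $J$ on $\widehat{X}$ restricting to $J_\pm$ on the cylindrical ends, and extend the formulas \eqref{eqn:I} and \eqref{eqn:J} verbatim to relative classes $Z \in H_2(\widehat{X}, \alpha, \beta)$ for $\alpha$ an orbit set in $Y_+$ and $\beta$ an orbit set in $Y_-$. Both $I$ and $J_+$ remain additive under concatenation of holomorphic buildings. For each simple admissible orbit set $\alpha$ with $\mathcal{A}(\alpha) < L$ and each admissible $\beta$, define
$$
\langle \Phi_k \alpha, \beta \rangle := \sum_u \varepsilon(u),
$$
where the sum runs over $J$-holomorphic buildings $u$ in $\widehat{X}$ from $\alpha$ to $\beta$ with $I(u) = 0$ and $J_+(u) = 2k$, weighted by signs from coherent orientations. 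Since $\omega$ is exact, action decreases across buildings, so $\mathcal{A}(\beta) < L$ automatically and $\Phi := \sum_k \Phi_k$ indeed takes values in $ECC^L(Y_-, \lambda_-, J_-)$.

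Property (a), $\Phi(\emptyset) = \emptyset$, follows because any finite-energy $J$-holomorphic curve in $\widehat{X}$ with no positive ends must satisfy $\int u^* \omega > 0$, whereas Stokes' theorem applied to the Liouville primitive of $\omega$ would force this integral to vanish. Property (b), the refined chain map identity \eqref{eqn:ssm}, is obtained from the standard end analysis of the one-dimensional moduli space of buildings $u$ from $\alpha$ to some $\gamma$ with $I(u) = 1$ and $J_+(u) = 2k$. The boundary of a generic such moduli space consists of broken configurations $(u_0, u_1)$ in which an additional story peels off into one of the two symplectization ends. By additivity of $I$ and $J_+$, such a split necessarily satisfies $\{I(u_0), I(u_1)\} = \{0, 1\}$ and $J_+(u_0) + J_+(u_1) = 2k$. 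Sorting these configurations according to which end the extra story appears at yields the two groups of terms $\Phi_i \partial_j$ and $\partial_i \Phi_j$ with $i + j = k$, and the signed count of boundary points being zero gives precisely \eqref{eqn:ssm}.

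The main obstacle is establishing compactness and transversality for these cobordism moduli spaces. In general, a sequence of embedded ECH-index-zero curves in a cobordism may converge to a limit building containing multiply covered components of negative Fredholm index, accompanied by branch covers of trivial cylinders in the symplectization levels, and this is the classical difficulty that prevents a direct holomorphic-curve definition of ECH cobordism maps on the full complex. The simplicity hypothesis on $\alpha$ is precisely what rules this pathology out at the top symplectization end: any top-level curve in a limit building has positive asymptotic orbit set $\alpha$ with all multiplicities equal to one, which forces each component to be simply covered, and then by definition of simplicity (with respect to $J_+$) the orbit set where this top level meets the cobordism story must also have all multiplicities one. Propagating this control downward through the cobordism level and the bottom symplectization levels, using additivity of $J_+$, the lower bound $J_+ \geq 0$ from Proposition~\ref{prop:J}(b), and a careful classification of permissible multiple covers via Proposition~\ref{prop:J}(a), is the most delicate part of the construction; I expect to handle it by induction on total $J_+$ together with obstruction bundle arguments to exclude accidental limit buildings with components of negative Fredholm index.
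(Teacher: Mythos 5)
Your approach follows the paper's overall plan (count ECH-index-zero curves in the completed cobordism, filter by $J_+$, use simplicity to avoid multiple covers), but it has a genuine gap: you never establish that $J_+(u) \geq 0$ for the curves $u$ in the completed cobordism that contribute to $\Phi$, so your definition $\Phi = \sum_k \Phi_k$ implicitly ranges over all integers $k$, including negative ones. Proposition~\ref{prop:J}(b) gives $J_+ \geq 0$ only in symplectizations, and the discussion just before the definition of $f$ explicitly warns that in exact cobordisms multiply covered curves can have $J_+ < 0$. Without this lower bound the decomposition required by part~(b) of the lemma does not hold and there is no morphism of spectral sequences, so the inequality of Theorem~\ref{thm:fm} cannot be deduced. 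The missing ingredient is the second, deeper role of the simplicity hypothesis: simplicity together with exactness (which forces every component of a curve in the completed cobordism to have a positive end) forbids multiply covered components at the cobordism level, and once multiple covers are excluded, the proof of $J_+ \geq 0$ in \cite{Hutchings:revisited}*{Thm.~6.6} carries over verbatim from symplectizations to the cobordism. This is the step you need to state and verify --- not merely the transversality consequence of simplicity, which is all you emphasize.

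Relatedly, your compactness discussion defers the hardest part to ``obstruction bundle arguments'' and ``induction on total $J_+$,'' but no new analysis of that kind is needed or used. Once multiply covered components are excluded from the cobordism level, the ECH index inequality of Proposition~\ref{prop:I}(a) controls that level directly, and the standard ECH gluing theory --- the lemmas of \cite{HutchingsTaubes:gluing1} that are used to prove $\partial^2 = 0$, which already handle branched covers of trivial cylinders in symplectization levels --- applies without change to show that all levels have $I=0$ and the limit building is genuinely a curve counted by $\Phi$. Citing those established results is what makes the lemma true; an obstruction-bundle analysis in the cobordism setting would be a substantial new project and is neither necessary nor carried out in the paper. (A small additional point: for part~(a), Stokes applied to the Liouville primitive gives $\int u^*\omega = \mathcal{A}(\alpha^+)-\mathcal{A}(\alpha^-)$, which for $\alpha^+=\emptyset$ is $\leq 0$ rather than $=0$; combined with $\int u^*\omega\geq 0$ from tameness this forces $u$ to be the empty curve, which is the paper's action-monotonicity argument.)
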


\begin{proof}
The proof has four steps.

  {\em Step 1.\/} We begin with the definition of $\Phi$.  Let
  $(X,\omega)$ be an exact symplectic cobordism from $(Y_+,\lambda_+)$
  to $(Y_-,\lambda_-)$.  Let $\lambda$ be the corresponding $1$-form
  on $X$.  There exists a neighborhood $N_+\simeq (-\epsilon,0]\times
  Y_+$ of $Y_+$ in $X$ in which $\lambda = e^s\lambda_+$ where $s$
  denotes the $(-\epsilon,0]$ coordinate.  Likewise there exists a
  neighborhood $N_-\simeq [0,\epsilon)\times Y_-$ of $Y_-$ in $X$ in
  which $\lambda = e^s\lambda_-$.  We then define the ``completion''
\[
\overline{X} = ((-\infty,0]\times Y_-) \cup_{Y_-} X \cup_{Y_+}
([0,\infty)\times Y_+),
\]
with smooth structure defined using the above neighborhoods.  Choose a
generic almost complex structure $J$ on $\overline{X}$ which agrees
with $J_+$ on $[0,\infty)\times Y_+$, which agrees with $J_-$ on
$(-\infty,0]\times Y_-$, and which is $\omega$-tame on $X$.  If
$\alpha^+$ and $\alpha^-$ are orbit sets in $Y_+$ and $Y_-$
respectively, define $\mc{M}_J(\alpha^+,\alpha^-)$ to be the moduli
space of $J$-holomorphic curves in $\overline{X}$ satisfying the
obvious analogues of the conditions in Definition~\ref{def:M}.  

The crucial point in all of what follows is this:
\begin{itemize}
\item[(*)] If the orbit set $\alpha^+$ is simple, then a holomorphic
  curve in $\mc{M}_J(\alpha^+,\alpha^-)$ cannot have any multiply
  covered component.  Also, a broken holomorphic curve arising as a
  limit of a sequence of curves in $\mc{M}_J(\alpha^+,\alpha^-)$
  cannot have any multiply covered component in the cobordism level.
\end{itemize}
Note that the proof of (*) uses exactness of the cobordism to deduce
that every component of a holomorphic curve in $\overline{X}$ has at
least one positive end.

Another key point is that the definition of the ECH index $I$, and the
index inequality in Proposition~\ref{prop:I}(a), carry over directly
to holomorphic curves in $\overline{X}$, see 
\cite{Hutchings:revisited}*{Thm.~4.15}.
In particular, if $\alpha^+$ is simple and if
$u\in\mc{M}_J(\alpha^+,\alpha^-)$ has $I(u)=0$, then the index
inequality applies to give $\op{ind}(u)\le I(u)$, and since $J$ is
generic we conclude that $I(u)=0$ and $u$ is an isolated point in the
moduli space, cut out transversely.  As a result, we can define the
map $\Phi$ as follows: If $\alpha^+$ is a simple admissible orbit set
in $Y_+$ with $\mc{A}(\alpha^+)<L$, then
\begin{equation}
\label{eqn:Phi}
\Phi(\alpha^+) \eqdef
\sum_{\alpha^-}\sum_{\{u\in\mc{M}_J(\alpha^+,\alpha^-)\mid I(u)=0\}}\epsilon(u),
\end{equation}
where the first sum is over admissible orbit sets $\alpha^-$ in
$Y_-$, and $\epsilon(u)\in\{\pm1\}$ is a sign defined as in 
\cite{HutchingsTaubes:gluing2}*{\S 9}.

{\em Step 2.\/} We now show that $\Phi$ is well-defined, i.e.\ that the sum
on the right hand side of \eqref{eqn:Phi} is finite, and we also prove
that $\Phi$ satisfies property (a).

To start, note that if there exists $u\in\mc{M}_J(\alpha^+,\alpha^-)$,
then exactness of the cobordism and Stokes's theorem imply that
$\mc{A}(\alpha^+)\ge \mc{A}(\alpha^-)$, with equality only if $u$ is
the empty holomorphic curve.  This has three important consequences.
First, $\Phi$ maps $ECC_{\op{simp}}^L$ to $ECC^L$ as required.
Second, $\Phi(\emptyset)=\emptyset$.  (The sign here follows from the
conventions in \cite{HutchingsTaubes:gluing2}*{\S 9}.)  
Third, for any simple
admissible orbit set $\alpha^+$, only finitely many admissible orbit
sets $\alpha^-$ can make a nonzero contribution to the right hand side
of \eqref{eqn:Phi}.  So to prove that $\Phi$ is well-defined, we need
to show that if $\alpha^+$ is a simple admissible orbit set in $Y_+$
and if $\alpha^-$ is an admissible orbit set in $Y_-$, then there are
only finitely many curves $u\in\mc{M}_J(\alpha^+,\alpha^-)$ with
$I(u)=0$.

Suppose to obtain a contradiction that there are infinitely many such
curves.  By a Gromov compactness argument as in 
\cite{Hutchings:index}*{Lem.~9.8}
we can then pass to a subsequence that converges to a broken
holomorphic curve with total ECH index and total Fredholm index both
equal to $0$.  By (*), the level of the broken curve in $\overline{X}$
cannot contain any multiply covered component.  Consequently the index
inequality implies that this level has $I\ge 0$, and so by
Proposition~\ref{prop:I}(a) all levels have $I=0$.  The proof of
\cite{HutchingsTaubes:gluing1}*{Lem.~7.19} 
then shows that there is only one level (i.e.\
there cannot be symplectization levels containing branched covers of
$\R$-invariant cylinders).  Thus the limiting curve is also an element
of $\mc{M}_J(\alpha^+,\alpha^-)$ with $I=0$, and since this is an
isolated point in the moduli space we have a contradiction.

{\em Step 3.\/} We now show that $\Phi$ is a chain map.  If $\alpha^+$
is a simple admissible orbit set in $Y_+$, then to prove that
$(\partial\Phi-\Phi\partial)\alpha^+=0$, we analyze ends of the $I=1$
part of $\mc{M}_J(\alpha^+,\alpha^-)$ where $\alpha^-$ is an
admissible orbit set in $Y_-$.  Again, by (*), a broken curve arising
as a limit of such curves cannot contain a multiply covered component
in the cobordism level.  Thus the proof of 
\cite{HutchingsTaubes:gluing1}*{Lem.~7.23}
carries over to show that a broken curve arising as a limit of such
curves consists of an $\op{ind}=I=0$ piece $u_0$ in the cobordism
level, an $\op{ind}=I=1$ piece $u_1$ in a symplectization level, and
(if $u_1$ is in $\R\times Y_-$) possibly additional levels in
$\R\times Y_-$ between $u_0$ and $u_1$ consisting of branched covers
of $\R$-invariant cylinders.  The gluing analysis to prove that
the ECH differential has square zero 
\cite{HutchingsTaubes:gluing1}*{Thm.~7.20} then
carries over with minor modifications to prove that
$\partial\Phi=\Phi\partial$.

{\em Step 4.\/} We now show that $\Phi$ satisfies property (b).  To do
so, note that if $u$ is a holomorphic curve counted by $\Phi$, then
$J_+(u)$ is even by the same parity argument as before.  Also, since
$u$ contains no multiply covered component, and since every component
of $u$ has a positive end, the proof of 
\cite{Hutchings:revisited}*{Thm.~6.6} carries
over to show that $J_+(u)\ge 0$.  We now define $\Phi_k$ to be the
contribution to $\Phi$ from curves $u$ with $J_+(u)=2k$.  Equation
\eqref{eqn:ssm} then follows from the fact that $J_+$ is additive
under gluing.
\end{proof}

\subsection{Conclusion}

\begin{proof}[Proof of Theorem~\ref{thm:fm}.]
 Let $^LE^*_{\op{simp}}(Y_+,\lambda_+,J_+)$ denote the $J_+$ spectral sequence for the subcomplex $ECC^L_{\op{simp}}(Y_+,\lambda_+,J_+)$, and let $^LE^*(Y_-,\lambda_-,J_-)$ denote the $J_+$ spectral sequence for the subcomplex 
  $ECC^L(Y_-,\lambda_-,J_-)$.
By Lemma~\ref{lem:ccm}(b), $\Phi$
  induces a morphism of spectral sequences
\[
\Phi^*:{^LE^*_{\op{simp}}(Y_+,\lambda_+,J_+)} \longrightarrow
{^LE^*(Y_-,\lambda_-,J_-)},
\]
which by Lemma~\ref{lem:ccm}(a) sends $\emptyset$ to $\emptyset$.  If
$f^L_{\op{simp}}(Y_+,\lambda_+,J_+)=k<\infty$, then $\emptyset$ does
not survive to ${^LE^{k+1}_{\op{simp}}}$.  Applying the morphism
$\Phi^*$ then shows that $\emptyset$ does not survive to
${^LE^{k+1}(Y_-,\lambda_-,J_-)}$, so $f^L(Y_-,\lambda_-,J_-)\le k$.
\end{proof}

\end{appendix}

\paragraph{\textbf{Acknowledgments.}}
I thank Patrick Massot for helpful discussions, and Cagatay Kutluhan and
Jeremy Van Horn-Morris for catching a minor error in the original version
of this appendix.

\begin{bibdiv}
\begin{biblist}
\bibselect{wendlc}
\end{biblist}
\end{bibdiv}

\end{document}